\definecolor{IRL}{rgb}{1.0, 0.5, 0.0}
\theoremstyle{theorem}
\newtheorem{theorem}{Theorem}
\newtheorem{definition}[theorem]{Definition}
\newtheorem{lemma}[theorem]{Lemma}
\newtheorem{proposition}[theorem]{Proposition}
\theoremstyle{remark}
\newtheorem{remark}[theorem]{Remark}
\newcommand{\h}{\mathsf{h}}
\newcommand{\B}{\mathcal{B}(\h)}
\newcommand{\unit}{\hbox{\rm 1\kern-2.8truept l}}
\newcommand{\T}{\mathcal{T}}
\newcommand{\HS}{\mathfrak{I}_2(\mathsf{h})}
\newcommand{\tr}{\hbox{\rm tr}}
\newcommand{\md}{\hbox{\rm d}}
\newcommand{\me}{\hbox{\rm e}}
\newcommand{\mi}{\mathrm{i}}
\newcommand{\XY}{X\kern-9trueptY}
\newcommand{\scalare}[2]{\left\langle #1 , #2 \right\rangle}
\newcommand{\rescalar}[2]{\Re \scalare{#1}{#2}}
\newcommand{\conj}[1]{\overline{#1}}
\newcommand{\scalar}[2]{\scalare{#1}{#2}}
\newcommand{\norm}[1]{\left\| #1 \right\|}
\newcommand{\ran}{\operatorname{Ran}}
\newcommand{\modulo}[1]{\left| #1 \right|}
\newcommand{\CZ}{\mathbf{C_Z}}
\newcommand{\identity}{\mathbbm{1}}
\newcommand{\outerp}[2]{\left| #1 \right\rangle \! \left\langle #2 \right|}
\begin{document}
\title{The Spectral Gap of a Gaussian Quantum Markovian Generator}
\author{F. Fagnola$^{(1)}$, D. Poletti$^{(2)}$, E. Sasso$^{(2)}$, V. Umanit\`a$^{(2)}$}

\maketitle

\begin{abstract}
Gaussian quantum Markov semigroups are the natural non-commutative extension of classical Ornstein-Uhlenbeck semigroups.
They arise in open quantum systems of bosons where canonical non-commuting random variables
of positions and momenta come into play.
If there exits a faithful invariant density we explicitly compute the optimal exponential convergence rate,
namely the spectral gap of the generator, in non-commutative $L^2$ spaces determined by the invariant density
showing that the exact value is the lowest eigenvalue of a certain matrix determined by the diffusion and drift matrices.
The spectral gap turns out to depend on the non-commutative $L^2$ space considered, whether the
one determined by the so-called GNS or KMS multiplication by the square root of the invariant density.
In the first case, it is strictly positive if and only if there is the maximum number of linearly independent
noises. While, we exhibit explicit examples in which it is strictly positive only with KMS multiplication.
We do not assume any symmetry or quantum detailed balance condition with respect to the invariant density.
\end{abstract}

Keywords: Open quantum systems, Gaussian Markov Semigroup, Spectral Gap, quantum Ornstein-Uhlenbeck process.
\section{Introduction}\label{sec:intr}
Classical Ornstein-Uhlenbeck semigroups $(T_t)_{t\geq 0}$ on bounded measurable functions on
$\mathbb{R}^d$ have the following explicit formula
\[
(T_tf)(x) = \frac{1}{\sqrt{(2\pi)^d\operatorname{det}(\Sigma_t)}}
\int_{\mathbb{R}^d} f\left(\mathrm{e}^{tZ^*}x-y\right)\exp\left(-\langle y, \Sigma_t^{-1} y\rangle/2\right)\mathrm{d}y
\]
where $Z$ is the drift matrix and $\Sigma_t$ the covariance matrix at time $t$ given by
$\Sigma_t = \int_0^t \mathrm{e}^{sZ^*} C \mathrm{e}^{sZ}\mathrm{d}s$ in terms of the symmetric matrix $C$
of diffusion coefficients. Here, for simplicity, we assume that $\Sigma_t$ is invertible and that the mean
of the Markov process is zero.
If we consider the function $x\mapsto \exp(\mi\langle z,x\rangle)$ the above formula becomes
\[
T_t \exp(\mi\langle z,\cdot\rangle) = \exp\left(-\frac{1}{2}\int_0^t\left\langle \mathrm{e}^{sZ}z, C \mathrm{e}^{sZ}z\right\rangle\mathrm{d}s\right) \exp(\mi\langle \mathrm{e}^{tZ}z,\cdot\rangle)\, .
\]
In applications to quantum theory one has to deal with random variables that do not commute, typically positions and momenta,
vector spaces are complex and symplectic structures come into play, but a similar formula holds (see \cite{Demoen77,Vheu78} and
the references therein). More precisely, for a system of $d$ bosons, the natural variables are $d$ position $q_1,\dots, q_d$ and
$d$ momentum $p_1=-\mi\mathrm{d}/\mathrm{d}q_1, \dots, p_d=-\mi\mathrm{d}/\mathrm{d}q_d$ operators. Exponential functions
are replaced by unitary Weyl operators
$W(x+\mi y) = \exp\left(-\mi\sqrt{2}\sum_{k=1}^d(x_kp_k-y_kq_k)\right)$ ($x,y\in\mathbb{R}^d$) and one finds the explicit
formula (\ref{eq:explWeyl}), Section \ref{sect:GQMS} (with the further inclusion of a non-zero mean vector $\zeta\in\mathbb{C}^d$). There, $z=x+\mi y$, $Z,C$ are \emph{real linear} operators
on $\mathbb{C}^d$ that can be viewed as complex linear operators on $\mathbb{C}^{2d}$ as explained in Section \ref{sect:GQMS},
Remark \ref{rem:complexify}.
In this way, one can define a Gaussian quantum Markov semigroup (QMS), namely a weakly$^*$-continuous semigroups of
completely positive, identity preserving maps on the von Neumann algebra of all bounded operators a Fock space
(see Section \ref{sect:GQMS} for precise definitions).

QMSs are essential tools in the mathematical modeling of open quantum systems (see \cite{BrPe,Partha}). Starting from the seminal
papers by Gorini, Kossakowski, Lindblad and Sudharshan (GKLS) in the seventies, they have been used in the physical literature (\cite{Demoen77,Vheu78}) and are now established as non-commutative extensions of classical Markov semigroups
(see \cite{CaMa,FaCiLi,Frigerio-irreducibility,VeWi,WiZhang}.
Gaussian QMSs, that are considered in this article, are a notable class of semigroups for many reasons: they include many  semigroups
that are interesting for physical applications (see \cite{BrPe,FlynnCoViola,Teret} and the references therein),
they are the natural non-commutative extension of classical Ornstein-Uhlenbeck semigroups
(see Section \ref{sect:GQMS} and \cite{AFP,AFPnT,CaSa,CaMa,ChMi,DaPratoZab,FaCiLi,Teret}), they arise from quantum master equations
and Fokker-Planck equations \cite{ArnoldCarlenJu,Arnold-Sig} and they provide a well-behaved and workable class
of semigroups acting on the algebra $\mathcal{B}(\mathsf{h})$ of all bounded operators on a Hilbert space $\mathsf{h}$.
Furthermore, they have countless applications to open quantum systems of bosons,
but rather little is known about their properties, especially if one thinks of the wealth of results established in the
classical case (see \cite{LuPaMe,OtPaPr} and the references therein).

Several properties of gaussian QMS have been recently established: the characterization as the unique class
of quantum Markov semigroups preserving quantum gaussian states \cite{Po2022}, the structure of the subalgebra where they
act as $^*$-homomorphisms \cite{AFP}, the characterization of irreducibility (under a regularity assumption \cite{FaPo-IDAQP2022}) and,
in the symmetric case, gradient flow properties and entropy inequalities \cite{CaMa}, and the expression of the generator as a
sum of squares of derivations \cite{VeWi}.

The main result of this paper is the explicit calculation of the spectral gap of a gaussian QMS as the lowest eigenvalue of
a certain matrix determined by the diffusion and drift parameters (Theorem \ref{thm:spectralGap}).
As for the classical Ornstein-Uhlenbeck process, assuming that the matrix $Z$ is stable, namely all eigenvalues
have strictly negative real part, every initial state converges to the invariant one  \cite{FaPo-IDAQP2024}.
It is therefore natural to study the speed of convergence towards that invariant state
in view of applications to problems such as  return to equilibrium and
limit theorems and dynamical phase transitions (see \cite{GiVHCaGu}).

A natural approach, motivated by the analogue in classical Markov semigroups theory, is to embed the set of bounded operators in the
Hilbert space of Hilbert-Schmidt operators, $\mathfrak{I}_2 (\h)$, by multiplying them by the square root of the invariant density.
In this way, one obtains a contraction semigroup (see \cite{CaFa}) and one can analyse its
generator by spectral analytic tools. However, due to the non-commutative nature of operators, multiplication by the
invariant density can be done in multiple ways, obtaining different embeddings. In this paper, first of all, we work with
GNS embedding
\[
i_2:\B\hookrightarrow\mathfrak{I}_2(\mathsf{h}),\quad\ i_2(x)=x\rho^{1/2}
\] We also consider the KMS embedding (see \cite{CaFa})
\[
i_{1/2,2}:\B\hookrightarrow\mathfrak{I}_2(\mathsf{h}),\quad\  i_{1/2,2}( x) = \rho^{1/4} x \rho^{1/4}
\]
and show that, in this case we get a different result (Theorem \ref{thm:spectralGapKMS}) for the explicit
expression of the spectral gap. In particular, the spectral gap could be strictly positive for
the KMS embedding and zero for the GNS embedding (see the example in subsection \ref{subsect:KMSgap}).
This is a typical feature of non-commutativity. Our results are first presented for the $i_2$
embedding because computations are somewhat more direct from the explicit formula of the characteristic function of
a quantum gaussian state.

Once chosen the GNS embedding, since $i_2$ has dense range, the next step is to extend $\T$ to a strongly continuous
contraction semigroup $(T_t)_{t \geq 0}$ on $\HS$ by setting
\[
T_t(x \rho^{1/2}) = \mathcal{T}_t(x)\rho^{1/2}\qquad\forall\,x\in\B,\ t\geq 0.
\]
Clearly $\rho^{1/2}$ is an invariant vector for $(T_t)_{t\geq 0}$. Denoting by $\langle x,y\rangle_{2}=\tr(x^*y)$ the
Hilbert-Schmidt scalar product and by $\Vert\cdot\Vert_2=\langle x,x\rangle_{2}^{1/2}$ the norm, we define the \emph{spectral gap} of this semigroup in the following way
\begin{definition}
If there exists $g>0$ such that
\begin{equation}\label{def-g}
\left\Vert T_t(x)-\langle\rho^{1/2},x\rangle_2\,\rho^{1/2}\right\Vert_2 \leq
\mathrm{e}^{-g t}\left\Vert x-\langle\rho^{1/2},x\rangle_2\,\rho^{1/2}\right\Vert_2 \quad\forall\, x\in \mathfrak{I}_2(\mathsf{h}),\,\forall\, t >0
\end{equation}
we say the semigroup $T$ has a spectral gap.
In that case, we call the \emph{spectral gap} of the semigroup the biggest constant $g>0$ that satisfies \eqref{def-g}.
\end{definition}

The spectral gap for the KMS embedding is defined in a similar way.

Note that, as usual in the theory of Markov processes and with a little abuse of terminology, the name
spectral gap for the above constant $g$, should be more precisely referred to as the spectrum of the symmetric
part of the generator of $T$. In this case, indeed, it is the distance from $0$ of the remaining part of the spectrum.

The spectral gap $g$ is explicitly computed for both the above embeddings (Theorem \ref{thm:spectralGap}, and Theorem
\ref{thm:spectralGapKMS}). In the first case, it turns out to be strictly positive under the
assumption that there is noise in all the non-commutative coordinates. In the second case, this condition in no
more necessary. In particular, we give explicit examples in which it strictly positive only for the
KMS embedding in subsection \ref{subsect:KMSgap}.

It is important to note that, due to the nature of gaussian QMSs, the computation of the spectral gap,
as other problems (see \cite{AFP,AFPnT,AFPKoss}), can be reduced to linear algebraic one.
However, it is worth noticing that the final result is not a straightforward generalization of
the classical one because one can find classical Ornstein-Uhlenbeck subprocesses with strictly positive spectral
gap of a quantum Ornstein-Uhlenbeck process with $g=0$ (see the example at the end of Section \ref{sec:oneDimCase}).
\smallskip

The paper is organized as follows. Section \ref{sect:GQMS} is devoted to the formal introduction of all the necessary
mathematical preliminaries, including the definition of gaussian QMSs. In Section \ref{sec:OUQMS} we
discuss the relationship between gaussian QMSs and classical Ornstein-Uhlenbeck semigroups, showing how the latter emerge
from suitable restrictions of the former. Section \ref{sec:spectralGap} contains the main result
on the spectral gap for the GNS embedding.
 Section \ref{sec:invariantStates} contains some further insight on the sufficient conditions used for the spectral gap, showing that they are not very restrictive when looking for a positive spectral gap.
In Section \ref{sec:oneDimCase} we present a class of examples with $d=1$ depending on some parameters.
The study of the spectral gap for the KMS embedding is undertaken in Section \ref{sect:KMSembed}.
More technical material and some computations are collected in Appendixes A, B and C.

\section{Gaussian Quantum Markov Semigroups} \label{sect:GQMS}
In this section we introduce gaussian QMSs starting from their generators and fix some notation.

Let $\mathsf{h}$ be the Fock space $\mathsf{h}=\Gamma(\mathbb{C}^d)$ which is
isometrically isomorphic to $\Gamma(\mathbb{C})\otimes\cdots\otimes\Gamma(\mathbb{C})$ with canonical
orthonormal basis $(e(n_1,\ldots,n_d))_{n_1,\ldots,n_d\geq0}$
(with $e(n_1,\ldots,n_d)$ the symmetrized version of $e_{n_1}\otimes\ldots\otimes e_{n_d}$). Let $a_j, a_j^{\dagger}$ be the creation and
annihilation operators of the Fock representation of the $d$-dimensional Canonical Commutation Relations (CCR)
\begin{eqnarray*}
a_j\,e(n_1,\ldots,n_d)& =& \sqrt{n_j}\ e(n_1,\ldots,n_{j-1},n_j-1,\ldots,n_d),\\
a_j^{\dagger}\,e(n_1,\ldots,n_d)&=&\sqrt{n_j+1}\ e(n_1,\ldots,n_{j-1},n_j+1,\ldots,n_d),
\end{eqnarray*}
The CCRs are written as $[a_j,a_k^{\dagger}]=\delta_{jk}\mathbbm{1}$, where $[\cdot,\cdot]$ denotes the
commutator, or, more precisely, $[a_j,a_k^{\dagger}]\subseteq \delta_{jk}\mathbbm{1}$ because the
domain of the operator in the left-hand side is smaller.

Linear combinations of both creation and annihilation operators are denoted as follows:
 \[
 a(v)=\sum\limits_{j=1}^d\overline{v}_ja_j, \quad a^{\dagger}(u)=\sum\limits_{j=1}^d u_ja_j^{\dagger}
 \]
for all $u^{T}=[u_1,\ldots,u_d],v^{T}=[v_1,\ldots,v_d]\in\mathbb{C}^d$.

The above operators are obviously defined on the linear manifold $D$ spanned by the
elements $(e(n_1,\ldots,n_d))_{n_1,\ldots,n_d\geq0}$ of the canonical orthonormal basis of $\mathsf{h}$ that
turns out to be an essential domain for all the operators considered so far. This also happens for field operators
\begin{equation}\label{eq:quadrature}
q(u) = \left(a(u)+a^\dagger(u)\right)/\sqrt{2}\qquad u\in\mathbb{C}^d
\end{equation}
that are symmetric and essentially self-adjoint on the domain $D$ by Nelson's theorem
on analytic vectors (\cite{ReSi} Th. X.39 p. 202).  If the vector $u$ has real (resp. purely
imaginary) components one finds position (resp. momentum) operators and the commutation relation
$[q(u),q(v)]\subseteq \mi \Im \langle u, v\rangle \unit$ (where $\Im$ and $\Re$ denote the imaginary and real part
of complex number and vectors). Momentum operators, i.e. fields $q(\mi r)$ with $r\in\mathbb{R}^d$ are also denoted
by $p(r)=\sum_{1\leq j\leq d} r_j p_j$ where $p_j=\mi(a^\dagger_j-a_j)/\sqrt{2}$. In a similar way we write
$q(r)=\sum_{1\leq j\leq d} r_j q_j$ with $q_j=q(e_j)=(a^\dagger_j+a_j)/\sqrt{2}$.

Another set of operators that will play
an important role in this paper are the Weyl operators $W(z), z \in \mathbb{C}^d$, which generate the entirety of $\mathcal{B}(\mathsf{h})$. They satisfy the CCR in the
exponential form, namely, for every $z,z^\prime \in \mathbb{C}^d$,
\begin{equation} \label{eq:WeylCCR}
	W(z)W(z^\prime) = \me^{-\mi \Im \scalar{z}{z^\prime}} W(z+z^\prime).
\end{equation}
It is well-known that $W(z)$ is the exponential of the anti self-adjoint operator $-\mi\sqrt{2}\, q(\mi z)$
\begin{equation} \label{eq:weylQ}
W(z) = \mathrm{e}^{-\mi\sqrt{2}\, q(\mi z)} = \mathrm{e}^{z a^\dagger-\overline{z}a}.
\end{equation}

A QMS $\mathcal{T}=(\mathcal{T}_t)_{t\geq 0}$ is a weakly$^*$-continuous semigroup of completely positive,
identity preserving, weakly$^*$-continuous maps on $\mathcal{B}(\mathsf{h})$.
The predual semigroup $\mathcal{T}_*= (\mathcal{T}_{*t})_{t\geq 0}$ on the predual space of trace class
operators on $\mathsf{h}$ is a strongly continuous contraction semigroup.

Gaussian QMSs can be defined either through their explicit action on Weyl operators or through their
generator, as we will do.
Let $L_\ell, H$ be the operators on $\mathsf{h}$ defined on the domain $D$ by
	\begin{align}
		H&= \sum_{j,k=1}^d \left( \Omega_{jk} a_j^\dagger a_k + \frac{\kappa_{jk}}{2} a_j^\dagger a_k^\dagger + \frac{\overline{\kappa_{jk}}}{2} a_ja_k \right) + \sum_{j=1}^d \left( \frac{\zeta_j}{2}a_j^\dagger + \frac{\conj{\zeta_j}}{2} a_j \right), \label{eq:H}\\
		 L_\ell &= \sum_{k=1}^d \left( \overline{v_{\ell k}} a_k + u_{\ell k}a_k^\dagger\right) \label{eq:Lell}
	\end{align}
where $1 \leq m \leq 2d$, $\Omega:=(\Omega_{jk})_{1\leq j,k\leq d} = \Omega^*$ and $\kappa:= (\kappa_{jk})_{1\leq j,k\leq d}= \kappa^{T}
\in M_d(\mathbb{C})$, are $d\times d$ complex matrices with $\Omega$ Hermitian and $\kappa$ symmetric,
$V=(v_{\ell k})_{1\leq \ell\leq m, 1\leq  k\leq d}, U=(u_{\ell k})_{1\leq \ell\leq m, 1\leq  k\leq d} \in M_{m\times d}(\mathbb{C})$
are $m\times d$ matrices and $\zeta=(\zeta_j)_{1\leq j\leq d} \in \mathbb{C}^d$. We assume also $\ker (V^*) \cap \ker (U^T) = \{0\}$ so that
the operators $L_\ell$, called Kraus' operators,  are linearly independent (see \cite{AFP}, Proposition 2.2).The above operators are closable and we will denote their closure by the same symbol.

For all $x\in\mathcal{B}(\mathsf{h})$ consider the quadratic form with domain $D\times D$
\begin{equation}\label{eq:Lform}
\begin{split}
\texttt{\textrm{\pounds}}(x)  [\xi',\xi] &=  \mi\scalar{H\xi'}{x\xi} - \mi\scalar{\xi'}{x H\xi} \\
			&-  \frac{1}{2} \sum_{\ell=1}^m \left( \scalar{\xi'}{xL_\ell^*L_\ell \xi} -2\scalar{L_\ell \xi'}{xL_\ell \xi}
         + \scalar{L_\ell^* L_\ell \xi'}{x \xi} \right)
\end{split}
\end{equation}This is a natural way to make sense of a Gorini, Kossakowski, Lindblad-Sudarshan (GKLS) representation
of the generator (see \cite{Po2022} Theorems 5.1, 5.2 and also  \cite{Demoen77,Vheu78}) in a generalized form since operators
$L_\ell, H$ are unbounded. Indeed, in other models, when these operators are bounded, one writes
the generator as
\begin{equation} \label{eq:Lgen}
\mathcal{L}(x) = \mi\left[ H, x\right]
-\frac{1}{2}\sum_{\ell=1}^m \left( L_\ell ^*L_\ell\, x - 2 L_\ell^* x L_\ell + x\, L_\ell ^*L_\ell\right),
\qquad x\in{\mathcal{B}(\mathsf{h})}.
\end{equation}

Gaussian QMSs are then defined by the following Theorem, whose proof can be found in \cite{AFPnT}, Appendix A.

\begin{theorem}\label{th:G-QMS!}
There exists a unique QMS, $\mathcal{T}=(\mathcal{T}_t)_{t\geq 0}$ such that, for all $x\in\mathcal{B}(\mathsf{h})$ and
$\xi,\xi'\in D$, the function $t\mapsto  \scalar{\xi'}{\mathcal{T}_t (x) \xi}  $ is differentiable and
\begin{align*}
	\frac{\mathrm{d}}{\mathrm{d}t} \scalar{\xi'}{\mathcal{T}_t (x) \xi}  = \texttt{\textrm{\pounds}}(\mathcal{T}_t(x)) [\xi',\xi]
 \qquad \forall\, t\geq 0.
\end{align*}
\end{theorem}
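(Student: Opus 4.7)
The plan is to follow the Davies--Chebotarev--Fagnola minimal quantum dynamical semigroup construction: build a completely positive contractive solution explicitly by a Dyson-type iteration off the ``damped'' one-parameter semigroup, and then use a Lyapunov estimate based on the number operator to promote the minimal solution to an identity-preserving QMS, which simultaneously yields existence and uniqueness.

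First, I would introduce
\[
G = -\mi H - \frac{1}{2}\sum_{\ell=1}^m L_\ell^* L_\ell
\]
on the domain $D$. Since $H$ is a polynomial of degree at most two in $a_j,a_j^\dagger$ and each $L_\ell$ has degree at most one, both $H$ and $L_\ell^*L_\ell$ are relatively bounded by powers of $N=\sum_j a_j^\dagger a_j$. The identity $2\,\re\scalar{\xi}{G\xi} = -\sum_\ell \norm{L_\ell \xi}^2 \leq 0$ for $\xi\in D$ shows $G$ is dissipative; a standard range argument on $D$ then gives that $\overline{G}$ generates a strongly continuous contraction semigroup $(P_t)_{t\geq 0}$ on $\mathsf{h}$. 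I would next define inductively $\mathcal{T}_t^{(0)}(x) = P_t^* x P_t$ and
\[
\mathcal{T}_t^{(n+1)}(x) = \mathcal{T}_t^{(0)}(x) + \int_0^t P_{t-s}^* \Bigl(\sum_{\ell=1}^m L_\ell^* \mathcal{T}_s^{(n)}(x) L_\ell\Bigr) P_{t-s}\, \mathrm{d}s.
\]
On positive $x$ the sequence is monotone and dominated by $\norm{x}\,\unit$, hence converges weakly$^*$ to a weak$^*$-continuous, completely positive, contractive semigroup $\mathcal{T}^{\min}$. Differentiating the integrated equation on matrix elements $\scalar{\xi'}{\mathcal{T}_t^{\min}(x)\xi}$ with $\xi,\xi'\in D$ recovers $\pounds(\mathcal{T}_t^{\min}(x))[\xi',\xi]$, so $\mathcal{T}^{\min}$ is a candidate for the statement.

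Finally, to upgrade $\mathcal{T}^{\min}$ to a true QMS (i.e.\ $\mathcal{T}_t^{\min}(\unit) = \unit$) and secure uniqueness, I would verify the Chebotarev--Fagnola sufficient condition: exhibit a positive self-adjoint operator $C$ with $D$ as a core, such that $\pounds(C)[\xi,\xi] \leq b \scalar{\xi}{C\xi}$ for all $\xi\in D$ and some $b>0$. A natural choice here is $C = (N+\unit)^2$, since the $\kappa$-quadratic terms of $H$ generate, via $\mi[H,C]$, expressions of order matching $C$. The Hermiticity $\Omega^*=\Omega$ and the symmetry $\kappa^T=\kappa$ are what produce the cancellations needed to close the bound, and the $\zeta$-linear terms of $H$ are absorbed as lower order contributions. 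Once this Lyapunov inequality holds, standard arguments yield both $\mathcal{T}_t^{\min}(\unit)=\unit$ and the fact that any QMS whose matrix elements on $D$ solve the form equation must coincide with $\mathcal{T}^{\min}$. The main obstacle is precisely this Lyapunov estimate: it is an elementary but delicate CCR computation in which one must carefully track mixed quadratic and linear monomials in $a_j, a_j^\dagger$ to obtain domination by $(N+\unit)^2$; all earlier steps are soft once that bound is in hand.
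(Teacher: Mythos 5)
Your proposal is correct and follows essentially the same route as the proof the paper relies on: the paper does not prove Theorem \ref{th:G-QMS!} in-text but defers to \cite{AFPnT}, Appendix A, where the argument is precisely the minimal quantum dynamical semigroup construction from the contraction semigroup generated by $G=-\mi H-\frac{1}{2}\sum_\ell L_\ell^*L_\ell$, followed by the Chebotarev--Fagnola conservativity/uniqueness criterion with a reference operator built from powers of the number operator. You have also correctly identified the genuinely delicate step, namely the Lyapunov-type form inequality $\pounds(C)[\xi,\xi]\leq b\,\scalar{\xi}{C\xi}$ for the quadratic terms of $H$.
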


One can derive as in \cite{AFPnT}, Theorem 2.4, the following formula for the action of a gaussian semigroup on Weyl operators.

\begin{theorem}\label{th:explWeyl}
Let $(\mathcal{T}_{t})_{t\ge 0}$ be the quantum Markov semigroup with generalized GKLS  generator
associated with $H,L_\ell$ as above. For all Weyl operator $W(z)$ we have
\begin{equation}\label{eq:explWeyl}
\mathcal{T}_t(W(z))
= \exp\left(-\frac{1}{2}\int_0^t \Re\scalar{\mathrm{e}^{sZ}z}{
  C \mathrm{e}^{sZ}z}\mathrm{d}s
+\mi\int_0^t  \Re\scalar{\zeta}{\mathrm{e}^{sZ}z} \mathrm{d}s \right)
W\left(\mathrm{e}^{tZ}z\right)
\end{equation}
where the \emph{real linear} operators $Z,C$ on $\mathbb{C}^d$ are
\begin{align} \label{eq:Zdef}
	Zz &= \left[ \left(U^T\, \conj{U} - V^T\, \conj{V} \right)/2 + \mi \Omega \right]z
+ \left[ \left(U^T V - V^T U\right)/2 + \mi \kappa \right] \conj{z}, \\
	Cz &= \left(U^T\, \conj{U} + V^T\, \conj{V} \right)z + \left(U^T V + V^T U\right)\conj{z}. \label{eq:Cdef}
\end{align}
\end{theorem}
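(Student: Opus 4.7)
Define $G_t(z) := \mathrm{e}^{\Psi(t,z)} W(\mathrm{e}^{tZ}z)$, where $\Psi(t,z)$ is the scalar in the exponent on the right-hand side of \eqref{eq:explWeyl}. Since $G_0(z) = W(z)$, by the uniqueness assertion of Theorem~\ref{th:G-QMS!} it suffices to show that $t\mapsto G_t(z)$ satisfies the same form-valued evolution equation
\begin{equation*}
\tfrac{d}{dt}\scalar{\xi'}{G_t(z)\xi} = \Lform(G_t(z))[\xi',\xi], \qquad \xi,\xi'\in D.
\end{equation*}
Writing $w(t):=\mathrm{e}^{tZ}z$ and using linearity of $\Lform$, this reduces to a single identity, parametrized by $w\in\mathbb{C}^d$, between $\Lform(W(w))$ and
$\bigl(-\tfrac12\Re\scalar{w}{Cw} + \mi\,\Re\scalar{\zeta}{w}\bigr)W(w) + \partial_\varepsilon|_{\varepsilon=0}W(w+\varepsilon Zw).$

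I would then compute $\Lform(W(w))$ using the canonical commutation relations on $D$: $[a_j,W(w)] = w_j\,W(w)$ and $[a_j^\dagger,W(w)] = \overline{w_j}\,W(w)$. These give $[L_\ell,W(w)] = c_\ell(w)\,W(w)$ and $[L_\ell^*,W(w)] = \overline{c_\ell(w)}\,W(w)$ with $c_\ell(w) := \scalar{v_\ell}{w} + \scalar{w}{u_\ell}$. A direct rearrangement, based on the displacement formulas $W(w)L_\ell^* = (L_\ell^* - \overline{c_\ell(w)})W(w)$ and $L_\ell W(w) = W(w)L_\ell + c_\ell(w)W(w)$, expresses the dissipative part of $\Lform(W(w))$ as a scalar multiple of $W(w)$ plus a combination $W(w)\cdot\Phi(w)$, with $\Phi(w)$ a field operator linear in $(w,\bar w)$. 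The Hamiltonian commutator $\mi[H,W(w)]$ splits analogously: the linear part of $H$ contributes the purely scalar $\mi\,\Re\scalar{\zeta}{w}\,W(w)$ (which immediately matches the $\zeta$-term in $\Psi$), while the quadratic part contributes an additional field piece that combines with $\Phi(w)$.

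For the right-hand side, I would expand $W(w+\varepsilon Zw)$ via the Weyl CCR in the form $W(w+\varepsilon\delta w) = \mathrm{e}^{\mi\,\Im\scalar{w}{\varepsilon\delta w}} W(w)W(\varepsilon\delta w)$, obtaining $\partial_\varepsilon|_{\varepsilon=0}W(w+\varepsilon Zw) = W(w)\bigl((Zw)\cdot a^\dagger - \overline{Zw}\cdot a\bigr) + \mi\,\Im\scalar{w}{Zw}\,W(w).$ Matching the scalar $W(w)$-coefficient and the $W(w)\cdot(\text{field})$-coefficient with the expression for $\Lform(W(w))$ found above reduces the theorem to the two algebraic identities
\begin{equation*}
\sum_\ell |c_\ell(w)|^2 = \Re\scalar{w}{Cw}
\end{equation*}
and the equality of the field part $(Zw)\cdot a^\dagger - \overline{Zw}\cdot a$ with the reorganized combination coming from $\sum_\ell c_\ell(w)L_\ell^*$, $\sum_\ell \overline{c_\ell(w)}L_\ell$, and $\mi[H_{\mathrm{quad}},W(w)]$. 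Both identities follow by direct expansion using \eqref{eq:Zdef}-\eqref{eq:Cdef}, and Theorem~\ref{th:G-QMS!} then yields $G_t(z) = \mathcal{T}_t(W(z))$.

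The main obstacle is the bookkeeping for the real-linear structure: since $Z$ and $C$ mix $z$ with $\bar z$, one must carefully separate the "diagonal" contributions of $U^T\bar U$ and $V^T\bar V$ (matching $\sum_\ell|\scalar{v_\ell}{w}|^2+\sum_\ell|\scalar{w}{u_\ell}|^2$) from the "anomalous" pairings $U^T V$ and $V^T U$ (matching $2\Re\sum_\ell\scalar{v_\ell}{w}\scalar{u_\ell}{w}$), and combine them with the Hermitian part $\Omega$ and the symmetric part $\kappa$ of $H_{\mathrm{quad}}$ so as to recover precisely the real-linear operators \eqref{eq:Zdef}-\eqref{eq:Cdef}. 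This combinatorial step is the real content of the theorem, and the computation is carried out in detail in \cite{AFPnT}, Theorem 2.4.
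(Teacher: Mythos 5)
The paper gives no in-text proof of this theorem --- it simply states that the formula is derived ``as in \cite{AFPnT}, Theorem 2.4'' --- and your outline is exactly that standard derivation: check that the ansatz $G_t(z)$ solves the form-valued master equation and identify it with $\mathcal{T}_t(W(z))$; your commutator formulas $[L_\ell,W(w)]=c_\ell(w)W(w)$, the expansion of $\partial_\varepsilon|_{\varepsilon=0}W(w+\varepsilon Zw)$, and the key scalar identity $\sum_\ell|c_\ell(w)|^2=\Re\scalar{w}{Cw}$ are all correct. The one step that needs more care than you give it is the appeal to ``the uniqueness assertion of Theorem~\ref{th:G-QMS!}'': that theorem asserts uniqueness of the \emph{semigroup} satisfying the equation for all $x$, not uniqueness of solutions of the Cauchy problem for the single initial datum $W(z)$, so the identification $G_t(z)=\mathcal{T}_t(W(z))$ requires the standard supplementary argument (uniqueness of bounded solutions for the minimal, conservative semigroup, as carried out in \cite{AFPnT}).
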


Note that the operators $Z, C$ that appear in the previous Theorem are only real linear since they inherit
the real vector space structure of $\mathbb{C}^d$ as argument of the Weyl operators.

For $d=1$ special cases of gaussian QMSs have been considered as quantum analogues of classical Ornstein-Uhlenbeck semigroups (see \cite{CaSa, FaCiLi}). The multidimensional case has also been extensively studied \cite{CaMa,VeWi,WiZhang} under symmetry or detailed balance conditions.

The natural duality between  $\mathcal{B}(\mathsf{h})$ and trace class operators
on $\mathsf{h}$ given by $(\rho,x)\mapsto \tr (\rho\, x)$, together with the weak$^*$ continuity
of the QMS $(\mathcal{T}_t)_{t\geq 0}$ allows one to define the pre-adjoint QMS $(\mathcal{T}_{*t})_{t\geq 0}$
on trace class operators on $\mathsf{h}$. Operators $\mathcal{T}_{*t}$ act on positive operators with
unit trace, called density matrices and representing quantum states, that play the role of classical probability densities
on $\mathbb{R}^{2d}$. A state is called \emph{faithful} if $\rho\,\xi=0$ implies $\xi=0$; this is the analogue of
a classical density with full support.

One of the defining properties of gaussian QMSs is that operators $\mathcal{T}_{*t}$ preserve the set of quantum gaussian states.
A quantum state is gaussian if its quantum characteristic function $\widehat{\rho}\,$
has a similar expression to the one of classic gaussian multivariate random variables
\begin{equation}\label{funz-caratt-rho}
\widehat{\rho}(z):=\tr(\rho W(z))=\exp\left(-\mi\Re\scalar{\mu}{z}-\frac{1}{2}\Re\scalar{z}{Sz} \right)\qquad\forall\,z\in\mathbb{C}^d
\end{equation}
for some $\mu\in\h$ and real linear, bounded, positive and invertible operator $S$ on $\h$ (we write $S\in\mathcal{B}_{\mathbb{R}}(\h)$). We denote this state as $\rho=\rho_{{(\mu, S)}}$. One can prove as in \cite{Po2022} Theorem 5.1 that

\begin{proposition} \label{prop:gaussianStateEvolution}
	If $\mathcal{T}$ is a gaussian QMS, then $\mathcal{T}_{*t} (\rho_{(\mu, S)}) = \rho_{(\mu_t, S_t)}$ with
	\begin{equation} \label{eq:invariantParameterEvolution}
		\mu_t = \me^{tZ^\sharp}\mu - \int_0^t \me^{sZ^\sharp} \zeta \md s, \quad S_t = \me^{tZ^\sharp} S \me^{tZ} + \int_0^t \me^{sZ^\sharp} C \me^{sZ} \md s
	\end{equation}
where $Z^\sharp$ denotes the adjoint of   $Z$ with respect to the scalar product $\Re\langle\cdot,\cdot\rangle$.
\end{proposition}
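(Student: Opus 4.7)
The plan is to verify the proposition by computing the quantum characteristic function of $\mathcal{T}_{*t}(\rho_{(\mu,S)})$ and comparing it with the Gaussian form \eqref{funz-caratt-rho}; since the characteristic function (i.e.\ the family of values $\tr(\sigma W(z))$, $z\in\mathbb{C}^d$) determines the state $\sigma$ uniquely, matching coefficients will identify $\mu_t$ and $S_t$.

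First, by the duality $\tr(\mathcal{T}_{*t}(\rho) x) = \tr(\rho\, \mathcal{T}_t(x))$ applied to $x=W(z)$, I have
\[
\widehat{\mathcal{T}_{*t}(\rho)}(z) \;=\; \tr\!\bigl(\rho\,\mathcal{T}_t(W(z))\bigr).
\]
Substituting the explicit formula \eqref{eq:explWeyl} of Theorem \ref{th:explWeyl}, the prefactor is a scalar and can be pulled out of the trace; the remaining trace is $\tr(\rho\, W(\me^{tZ}z))=\widehat{\rho}(\me^{tZ}z)$, and this is explicit from \eqref{funz-caratt-rho}. Adding the three exponents gives
\[
\widehat{\mathcal{T}_{*t}(\rho)}(z) = \exp\!\Bigl(-\tfrac{1}{2}\!\!\int_0^t\!\!\Re\scalar{\me^{sZ}z}{C\me^{sZ}z}\md s + \mi\!\!\int_0^t\!\!\Re\scalar{\zeta}{\me^{sZ}z}\md s - \mi\Re\scalar{\mu}{\me^{tZ}z} - \tfrac{1}{2}\Re\scalar{\me^{tZ}z}{S\me^{tZ}z}\Bigr).
\]
This exponent is the sum of a real-linear functional of $z$ (the two middle terms) and a real-quadratic form in $z$ (the first and last term), so the expression is already in the Gaussian form \eqref{funz-caratt-rho} for some candidate $\mu_t, S_t$.

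Next, I transfer each occurrence of $\me^{sZ}$ (acting on $z$) onto the opposite argument of $\Re\scalar{\cdot}{\cdot}$ using the real-linear adjoint $Z^\sharp$: for any $w\in\mathbb{C}^d$ one has $\Re\scalar{w}{\me^{sZ}z}=\Re\scalar{\me^{sZ^\sharp}w}{z}$, and likewise $\Re\scalar{\me^{sZ}z}{C\me^{sZ}z}=\Re\scalar{z}{\me^{sZ^\sharp}C\me^{sZ}z}$. Applying this to the linear part yields
\[
\mi\!\!\int_0^t\!\!\Re\scalar{\me^{sZ^\sharp}\zeta}{z}\md s - \mi\Re\scalar{\me^{tZ^\sharp}\mu}{z} \;=\; -\mi\,\Re\!\scalare{\me^{tZ^\sharp}\mu-\!\int_0^t\!\me^{sZ^\sharp}\zeta\,\md s}{z},
\]
which, matched against $-\mi\Re\scalar{\mu_t}{z}$, gives the first formula in \eqref{eq:invariantParameterEvolution}. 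Applying the same transformation to the quadratic part and matching against $-\tfrac{1}{2}\Re\scalar{z}{S_t z}$ gives the second formula.

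The only subtlety, and the step where one must be careful rather than technically hard, is the bookkeeping of the real-linear structure: $Z$ and $C$ are only real-linear on $\mathbb{C}^d$, so all adjoints must be understood with respect to $\Re\scalar{\cdot}{\cdot}$ and the usual complex bilinearity identities do not apply verbatim. Once this is respected, nothing else needs to be checked: $\mathcal{T}_{*t}(\rho_{(\mu,S)})$ is automatically a state because $\mathcal{T}_{*t}$ is positive and trace-preserving, and the equality of its characteristic function with that of $\rho_{(\mu_t,S_t)}$ forces the two density operators to coincide.
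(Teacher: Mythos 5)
Your argument is correct and is essentially the canonical one: the paper does not prove this proposition itself but defers to \cite{Po2022}, Theorem 5.1, and the proof there is exactly your computation --- dualize via $\tr(\mathcal{T}_{*t}(\rho)W(z))=\tr(\rho\,\mathcal{T}_t(W(z)))$, insert \eqref{eq:explWeyl} and \eqref{funz-caratt-rho}, move every $\me^{sZ}$ across $\Re\scalar{\cdot}{\cdot}$ via $Z^\sharp$, and conclude by injectivity of the quantum characteristic function. The one point worth stating explicitly is that matching $-\tfrac12\Re\scalar{z}{S_t z}$ only pins down the $\Re\scalar{\cdot}{\cdot}$-symmetric part of $S_t$, but the candidate $\me^{tZ^\sharp}S\me^{tZ}+\int_0^t\me^{sZ^\sharp}C\me^{sZ}\,\md s$ is already symmetric, positive and invertible (since $S$ is and $C\geq 0$), so the identification with a legitimate gaussian parameter pair is immediate.
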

\medskip

The adjoint $Z^\sharp$ of $Z$ with respect to the scalar product $\Re\langle\cdot,\cdot\rangle$
is explicitly given by $Z^\sharp z = Z_1^* z + Z_2^T \conj{z}$ where $Z_1$ and $Z_2$ are the operators in (\ref{eq:Zdef})
acting on $z$ and $\overline{z}$. Here, however, further clarifications and remarks on properties of real linear operators
that are useful to better explain the computations performed in the following sections are in order.

\begin{remark}\label{rem:complexify}\rm In general a real linear operator $A$ on $\mathbb{C}^d$ can be written as
\[
	Az = A_1 z + A_2 \conj{z}, \quad \forall z \in \mathbb{C}^d,
\]
where now $A_1, A_2$ are complex linear operators. In order to better understand the action of such operators we can exploit the isomorphism of $(\mathbb{C}^d, \Re\scalar{\cdot}{\cdot})$ as a real vector space with $(\mathbb{R}^{2d}, \scalar{\cdot}{\cdot})$, through the decomposition
of a complex vector $z = x+ \mi y$ in its real and imaginary part. On this new vector space $A$ takes the form
\[
	A_{2d} = \begin{bmatrix}
		\Re A_1 + \Re A_2 & \Im A_2 - \Im A_1 \\
		\Im A_1 + \Im A_2 & \Re A_1 - \Re A_2
	\end{bmatrix}
\]
and $A_{2d}$ is a \emph{real linear} operator on $\mathbb{R}^{2d}$.
This isomorphism will also be useful when considering the complexification of the operator $A$. Indeed it is much simpler to work with the complexification $\mathbf{A}$ of $A_{2d}$
\[
	\mathbf{A} = \begin{bmatrix}
		\Re A_1 + \Re A_2 & \Im A_2 - \Im A_1 \\
		\Im A_1 + \Im A_2 & \Re A_1 - \Re A_2
	\end{bmatrix},
\]
now acting on $(\mathbb{C}^{2d}, \scalar{\cdot}{\cdot})$, than it is to consider the complexification of $A$. For this reason whenever the complexification of a real linear operator is needed we will think of it as acting on $(\mathbb{C}^{2d}, \scalar{\cdot}{\cdot})$. We also have
\begin{equation}\label{scal-isom-2d}
\Re\scalar{z}{Aw}=\scalar{ \begin{bmatrix} \Re z \\ \Im z \end{bmatrix}}{A_{2d}\begin{bmatrix} \Re w \\ \Im w\end{bmatrix}}=\scalar{ \begin{bmatrix} \Re z \\ \Im z \end{bmatrix}}{\mathbf{A}\begin{bmatrix} \Re w \\ \Im w\end{bmatrix}}\quad\forall\,z,w\in\mathbb{C}^d,
\end{equation}
where the scalar products are the canonical ones on $\mathbb{C}^d, \mathbb{R}^{2d}, \mathbb{C}^{2d}$ respectively. The adjoint operations are denoted $A^\sharp, A_{2d}^T, \mathbf{A}^*$, respectively.

A notable example of real linear operator is $J z= - \mi z$ to which is associated
\[
	\mathbf{J} = \begin{bmatrix}
		0 & \unit\, \\ -\unit & 0
	\end{bmatrix}\in M_{2d}(\mathbb{C}).
\]

It is easy to prove that the following equality holds
\begin{equation}\label{legame-prod-scal}
\Re\scalar{z}{Aw}+\mi\Im\scalar{z}{w}=\scalar{ \begin{bmatrix} \Re z \\ \Im z \end{bmatrix}}{(\mathbf{A}+\mi \mathbf{J})\begin{bmatrix} \Re w \\ \Im w\end{bmatrix}}\qquad\forall\,z,w\in\mathbb{C}^d,
\end{equation}
and
\begin{equation}\label{complessificazione}
\scalar{\begin{bmatrix} z_1 \\ z_2 \end{bmatrix}}{\mathbf{A}\begin{bmatrix} w_1 \\ w_2 \end{bmatrix}}=
\Re\scalar{\zeta_1}{A\xi_1}+\Re\scalar{\zeta_2}{A\xi_2}+\mi\Re\scalar{\zeta_1}{A\xi_2}-\mi\Re\scalar{\zeta_2}{A\xi_1}
\end{equation}
for all $z_1,z_2,w_1,w_2\in\mathbb{C}^d$, where
$$\begin{array}{ll}\zeta_1=\Re z_1+\mi \Re z_2 &\
\zeta_2=\Im z_1+\mi\Im z_2\\
\xi_1=\Re w_1+\mi \Re w_2 &\
\xi_2=\Im w_1+\mi\Im w_2
\end{array}.
$$
We can then conclude that two \emph{real} linear operators $A$ and $B$ on $\mathbb{C}^d$ coincide if and only if their complexification $\mathbf{A}$ and $\mathbf{B}$ coincide (as complex linear operators on $\mathbb{C}^{2d}$).

\end{remark}
The following Proposition now collects useful formulae on the operators $Z,C$ and introduces the matrix
\[
	\CZ := \mathbf{C} - \mi(\mathbf{Z}^*\mathbf{J} + \mathbf{J} \mathbf{Z})
\]
which will have a central role in the paper.
\begin{proposition} \label{prop:CandZ}
	Let $Z, C$ be given by \eqref{eq:Zdef} and \eqref{eq:Cdef}.
	\begin{enumerate}
		\item We can write
		\[
		C = \sqrt{C}^T \sqrt{C} \geq 0,
	\]
	where $\sqrt{C}z = \conj{U} z + V \conj{z}$, for $z \in \mathbb{C}^d$.
	\item It holds
	\begin{eqnarray}
        \mathbf{Z} & = & \frac{1}{2}\begin{bmatrix}
			\Re \left(\left( U - \conj{V}\right)^*\left(U +\conj{V}\right)\right) & \Im\left(\left( U - \conj{V}\right)^*\left(U -\conj{V}\right)\right) \\
			- \Im \left(\left( U + \conj{V}\right)^*\left(U +\conj{V}\right)\right) & \Re \left(\left( U + \conj{V}\right)^*\left(U -\conj{V}\right)\right)
		\end{bmatrix} \nonumber \\
 & + & \begin{bmatrix}
			-\Im \left(\Omega + \kappa\right) & \Re\left(\kappa - \Omega \right) \\
			\Re\left(\Omega + \kappa\right) & \Im\left(\kappa - \Omega\right)
		\end{bmatrix} \label{eq:mathbfZ} \\
		\CZ  & = & \begin{bmatrix}
		\left( U + \conj{V} \right)^* \left(U + \conj{V}\right) & -\mi\left( U + \conj{V}\right)^* \left( U - \conj{V} \right) \\
		\mi \left( U - \conj{V}\right)^* \left(U +  \conj{V}\right) & \left( U -\conj{V} \right)^* \left( U - \conj{V}\right)
\end{bmatrix} \geq 0. \label{eq:mathbfCZ}
	\end{eqnarray}
	\item The semigroup has exactly $2d$ (linearly independent) Kraus' operators if and only if $\CZ > 0$.
	\end{enumerate}
\end{proposition}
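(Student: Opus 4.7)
The plan is to dispatch the three items by direct computation, exploiting the dictionary of Remark \ref{rem:complexify} between real-linear operators on $\mathbb{C}^d$ and their $2d\times 2d$ complexifications. For item 1, I would use the general rule that the real-linear adjoint of $w\mapsto A_1 w + A_2\conj{w}$ is $v\mapsto A_1^* v + A_2^T\conj{v}$ (the same formula that gives $Z^\sharp$ in the paragraph following Proposition \ref{prop:gaussianStateEvolution}). Applied to $\sqrt{C}z = \conj{U}z + V\conj{z}$, it yields $\sqrt{C}^T w = U^T w + V^T\conj{w}$, and a one-line composition
\[
\sqrt{C}^T\sqrt{C}\,z = U^T\bigl(\conj{U}z + V\conj{z}\bigr) + V^T\bigl(U\conj{z} + \conj{V}z\bigr) = (U^T\conj{U} + V^T\conj{V})z + (U^T V + V^T U)\conj{z}
\]
matches $Cz$ via \eqref{eq:Cdef}. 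Positivity is then immediate from $\Re\scalare{z}{Cz} = \norm{\sqrt{C}z}^2 \geq 0$.

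For item 2, I would split $Z = Z^{\mathrm{noise}} + Z^{\mathrm{Ham}}$, with $Z^{\mathrm{Ham}}z = \mi\Omega z + \mi\kappa\conj{z}$, and apply the complexification formula of Remark \ref{rem:complexify} to each piece. The Hamiltonian contribution produces the second summand in \eqref{eq:mathbfZ} via $\Re(\mi\Omega) = -\Im\Omega$, $\Im(\mi\Omega) = \Re\Omega$ (and similarly for $\kappa$); the noise contribution reproduces the first summand after expanding the entries of $(U\pm\conj{V})^*(U\pm\conj{V})$ and separating real and imaginary parts, using the identities $U^T\conj{U} = \conj{U^*U}$, $(U^T V)^T = V^T U$, and $U^*\conj{V} = \conj{U^T V} = (V^T U)^*$ to reconcile the two forms. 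For $\CZ$, the cleanest route is to observe that the block matrix on the right-hand side of \eqref{eq:mathbfCZ} is precisely $M^*M$ with
\[
M := \bigl[\,U + \conj{V}\,,\ -\mi(U - \conj{V})\,\bigr],
\]
and to verify $M^*M = \mathbf{C} - \mi(\mathbf{Z}^*\mathbf{J} + \mathbf{J}\mathbf{Z})$ block by block, using the expression for $\mathbf{Z}$ just obtained together with $\mathbf{J} = \bigl(\begin{smallmatrix} 0 & \unit \\ -\unit & 0\end{smallmatrix}\bigr)$. Positivity $\CZ \geq 0$ is then automatic from the $M^*M$ factorization.

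For item 3, the key observation is the factorization
\[
M = \bigl[\,\conj{V},\ U\,\bigr]\begin{bmatrix} \unit & \mi\,\unit \\ \unit & -\mi\,\unit \end{bmatrix} =: F\,T',
\]
where $F$ is the $m\times 2d$ matrix of coefficients of $(L_1,\ldots,L_m)$ in the basis $(a_1,\ldots,a_d,a_1^\dagger,\ldots,a_d^\dagger)$ and $T'$ is invertible. Hence $\operatorname{rank}(\CZ) = \operatorname{rank}(M^*M) = \operatorname{rank}(M) = \operatorname{rank}(F)$. Because the annihilation and creation operators are linearly independent as operators on $D$, the standing hypothesis $\ker V^* \cap \ker U^T = \{0\}$ is equivalent to $\operatorname{rank}(F) = m$. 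Thus $\CZ > 0$, i.e.\ $\operatorname{rank}(\CZ) = 2d$, holds if and only if $m = 2d$.

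The main obstacle lies in item 2: matching the explicit block expressions in \eqref{eq:mathbfZ} and \eqref{eq:mathbfCZ} requires careful bookkeeping of real and imaginary parts of products such as $U^T\conj{U}$, $V^T U$, $U^*\conj{V}$, together with their (anti)symmetry and (anti)Hermiticity properties. Items 1 and 3 each reduce, by contrast, to a single clean algebraic identity.
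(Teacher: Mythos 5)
Your proposal is correct and follows essentially the same route as the paper: direct computation of the complexified block matrices for items 1 and 2 (the paper defers this to Appendix A), and the factorization $\CZ = M^*M$ with $M = \bigl[\,U+\conj{V},\ -\mi(U-\conj{V})\,\bigr]$ for the positivity and for item 3. Your extra factorization $M = FT'$ with $T'$ invertible is only a repackaging of the paper's step in which $\ker M^* = \ker (U+\conj{V})^* \cap \ker (U-\conj{V})^*$ is reduced, by taking linear combinations, to $\ker U^T \cap \ker V^* = \{0\}$; both arguments yield $\operatorname{rank} M = m$ and hence $\CZ>0$ if and only if $m=2d$.
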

\begin{proof}
Equalities in statements $1$ and $2$ follow from direct computation starting from the expression \eqref{eq:Zdef}, \eqref{eq:Cdef}
(see Appendix A). The inequality instead follows from noticing that, setting
	\[
		M = \begin{bmatrix}
			U + \conj{V} & -\mi \left( U - \conj{V}\right)
		\end{bmatrix} \in M_{m, 2d}(\mathbb{C})
	\]
	one has
	\[
		\CZ = M^* M \geq 0.
	\]
	Eventually the inequality holds strict if and only if $\ker M = \{0\}$ or equivalently $\operatorname{dim}\ran M = 2d$. However we have
	\[
		\operatorname{dim}\ran M = m - \operatorname{dim}(\ran M)^\perp = m-\operatorname{dim} \ker M^* = m,
	\]
	since  $\ker M^* = \ker ( U + \conj{V})^* \cap \ker (U - \conj{V})^*$ and by taking linear combinations of the two operators we get to  $\ker M^* = \ker U^T \cap \ker V^* = \{0\}$ by the conditions we imposed on the matrices $U, V$.
%
%
\end{proof}

Note that we introduced the notation $\CZ$ using bold-face characters even though the operator is not the complexification of any real linear operator. This is to emphasize the fact that the operator acts on $\mathbb{C}^{2d}$.

\medskip

We end this section by recalling the following result on invariant densities, namely, those positive
operators $\rho$ with unit trace such that $\mathcal{T}_{*t}(\rho)=\rho$ for all $t\geq 0$. This parallels known
results (\cite{DaPratoZab} Section 11.2.3) for classical Ornstein-Uhlenbeck processes. The real linear operator
$\mathbf{Z}$ is called \emph{stable} if all its eigenvalues have strictly negative real part. It is clear from
formula \eqref{scal-isom-2d} that $Z$ is stable if and only if $\mathbf{Z}$ is stable.

\begin{theorem} \label{thm:invariantState}
	Let $\mathcal{T}$ be a gaussian QMS and suppose $Z$ is stable.
  Then there is a unique gaussian state $\rho = \rho_{(\mu,S)}$ which is invariant for $\mathcal{T}$ whose parameters are given by:
	\begin{equation} \label{cond-su-S}
		\mu = (Z^\sharp)^{-1} \zeta, \quad S = \int_0^\infty \me^{sZ^\sharp} C \me^{sZ} \md s.
	\end{equation}
	If also $\CZ >0$ then $\rho$ is faithful and it is the unique invariant state for $\mathcal{T}$.
\end{theorem}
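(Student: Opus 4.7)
The plan is to translate the invariance condition for Gaussian states into a pair of fixed-point equations on the parameters $\mu$ and $S$, solve them explicitly using stability, and then upgrade uniqueness from the Gaussian class to all states using the full-rank condition on $\mathbf{C_Z}$.

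First, I would observe that by Proposition \ref{prop:gaussianStateEvolution}, a Gaussian state $\rho_{(\mu,S)}$ is invariant iff $(\mu,S)=(\mu_t,S_t)$ for every $t\geq 0$. Differentiating \eqref{eq:invariantParameterEvolution} at $t=0$ yields the linear equations
\[
Z^\sharp \mu = \zeta, \qquad Z^\sharp S + S Z + C = 0.
\]
Since $Z$ (equivalently $\mathbf{Z}$, by \eqref{scal-isom-2d}) is stable, $0$ is not an eigenvalue of $Z^\sharp$, so the first equation has the unique solution $\mu = (Z^\sharp)^{-1}\zeta$. For the second, stability makes the integral $\int_0^\infty \me^{sZ^\sharp} C \me^{sZ}\md s$ absolutely convergent and a direct differentiation shows it solves the Lyapunov equation; conversely, if $S$ is any solution, then $S_t=S$ in \eqref{eq:invariantParameterEvolution} forces $\me^{tZ^\sharp} S \me^{tZ}\to S -\int_0^\infty \me^{sZ^\sharp} C \me^{sZ}\md s$ while the left side tends to $0$ by stability, giving uniqueness of $S$.

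Next I would verify that this $S$ is admissible as the covariance of a Gaussian state: it is $\mathbb{R}$-linear, bounded, and satisfies $\Re\la z,Sz\ra=\int_0^\infty \Re\la \me^{sZ}z, C\me^{sZ}z\ra \md s\geq 0$ because $C\geq 0$ by Proposition \ref{prop:CandZ}(1). The additional symplectic bound needed to be a bona fide quantum covariance (encoded by $\mathbf{S}+\tfrac{\mi}{2}\mathbf{J}\geq 0$) should follow from integrating the identity $\mathbf{Z}^*(\mathbf{S}+\tfrac{\mi}{2}\mathbf{J})+(\mathbf{S}+\tfrac{\mi}{2}\mathbf{J})\mathbf{Z}=-\mathbf{C}+\mi(\mathbf{Z}^*\mathbf{J}+\mathbf{J}\mathbf{Z})=-\mathbf{C_Z}\leq 0$ against $\me^{s\mathbf{Z}}$ on both sides, producing $\mathbf{S}+\tfrac{\mi}{2}\mathbf{J}=\int_0^\infty \me^{s\mathbf{Z}^*}\mathbf{C_Z}\me^{s\mathbf{Z}}\md s\geq 0$. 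This identity simultaneously shows that $\mathbf{S}+\tfrac{\mi}{2}\mathbf{J}$ is strictly positive whenever $\mathbf{C_Z}>0$, because then the integrand is positive definite for all $s$; in particular $\mathbf{S}>0$, which corresponds to the Gaussian state $\rho$ having full support, i.e.\ being faithful.

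Finally, for uniqueness among \emph{all} invariant states under $\mathbf{C_Z}>0$, I would invoke the irreducibility criterion of \cite{FaPo-IDAQP2022}: by Proposition \ref{prop:CandZ}(3) the condition $\mathbf{C_Z}>0$ is equivalent to having the maximal number $2d$ of linearly independent Kraus operators, which in the Gaussian setting yields irreducibility of $\mathcal{T}$. A weak$^*$-continuous QMS admitting a faithful invariant state is irreducible iff that state is the unique invariant one, so the faithful Gaussian state $\rho$ constructed above is the unique invariant state of $\mathcal{T}$. The main obstacle I anticipate is this last step: separating cleanly the two uses of $\mathbf{C_Z}>0$ (faithfulness of the candidate $\rho$ and full uniqueness) and correctly citing or rederiving the irreducibility characterization in the unbounded Gaussian context, since the classical Frigerio-style arguments require some care when the generator is only formally given by \eqref{eq:Lform}.
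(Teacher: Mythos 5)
Your proposal follows essentially the same route as the paper: differentiate the parameter flow of Proposition \ref{prop:gaussianStateEvolution} to obtain $Z^\sharp\mu=\zeta$ and the Lyapunov equation $Z^\sharp S+SZ=-C$, solve them using stability, derive $\mathbf{S}+\mi\mathbf{J}=\int_0^\infty\me^{s\mathbf{Z}^*}\CZ\,\me^{s\mathbf{Z}}\,\md s$ to get faithfulness from $\CZ>0$, and conclude uniqueness among all states from irreducibility (equivalent to having $2d$ Kraus operators) together with Frigerio's theorem. One small correction: in the paper's normalization the faithfulness criterion is $\mathbf{S}+\mi\mathbf{J}>0$ rather than $\mathbf{S}+\tfrac{\mi}{2}\mathbf{J}\geq 0$, and your displayed identity $\mathbf{Z}^*(\mathbf{S}+\tfrac{\mi}{2}\mathbf{J})+(\mathbf{S}+\tfrac{\mi}{2}\mathbf{J})\mathbf{Z}=-\CZ$ is off by a factor of $2$ in the $\mathbf{J}$ term; dropping the $\tfrac{1}{2}$ makes it match $\CZ=\mathbf{C}-\mi(\mathbf{Z}^*\mathbf{J}+\mathbf{J}\mathbf{Z})$. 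Your limit argument for uniqueness of the Lyapunov solution (letting $t\to\infty$ so that $\me^{tZ^\sharp}S\me^{tZ}\to 0$) is a clean, self-contained alternative to the paper's appeal to the invertibility of the associated linear system from \cite{Bellman}.
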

\begin{proof}
	Let us start by showing the existence of an invariant state under the stability condition.
	Let $\rho= \rho_{(\mu, S)}$ a gaussian state. For it to be an invariant state, from Proposition \ref{prop:gaussianStateEvolution},
we must have
	\[
		\mu = \me^{tZ^\sharp}\mu - \int_0^t \me^{sZ^\sharp} \zeta \md s, \quad S_t = \me^{tZ^\sharp} S \me^{tZ} + \int_0^t \me^{sZ^\sharp} C \me^{sZ} \md s, \quad \forall t \geq 0.
	\]
	The previous equations can be equivalently rewritten as
	\[
		\int_0^t \me^{sZ^\sharp} \left( Z^\sharp \mu - \zeta\right) \md s = 0, \quad \int_0^t \me^{sZ^\sharp} \left( Z^\sharp S + SZ + C\right) \me^{sZ} \md s = 0, \quad \forall t \geq 0.
	\]
	Since the equations hold for every $t \geq 0$ their derivative must vanish at every $t \geq 0$. Moreover both $\me^{tZ^\sharp}$ and $\me^{tZ}$ are invertible so the equations are eventually equivalent to
	\begin{equation} \label{eq:invStateEquation}
		Z^\sharp \mu = \zeta, \quad Z^\sharp S + S Z = -C.
	\end{equation}
	Now it is easy to show that the pair $\mu, S$ given by \eqref{cond-su-S} solves the previous equations. Indeed the integral in the definition for $S$ converges and
	\begin{align*}
		Z^\sharp S + S Z = \int_0^\infty \me^{sZ^\sharp} \left( Z^\sharp C + CZ\right) \me^{sZ} \md s = \left[\me^{sZ^\sharp} C \me^{sZ}\right]_0^\infty = -C.
	\end{align*}
	where the last equality is due to the stability of $Z$.
	
	Uniqueness of the invariant state among gaussian states is due to the fact that $Z$ is stable and rewriting equation \eqref{eq:invStateEquation} as a linear system for the entries of $S$ (see \cite{Bellman} Theorem 12.4 and the discussion preceding it) one obtains a system matrix which is invertible.
	
	Suppose now that $\CZ > 0$. It is easy to see that $\CZ$, explicitly given in Proposition \ref{prop:CandZ},
is unitarily equivalent to the Kossakowski matrix of the semigroup (see \cite{AFPKoss}). Therefore, by \cite{AFPKoss} Theorem 4, the semigroup is irreducible. \\
	Recall now that a gaussian state is faithful if and only if $\mathbf{S} - \mi \mathbf{J} >0$, or equivalently, by conjugation,
 $\mathbf{S} + \mi \mathbf{J}>0$ (see \cite[Theorem 4]{KRPSymm}) and note that, thanks to the stability of $Z$ we can write
	\[
		-J = \int_0^{\infty}  \me^{sZ^\sharp} (Z^\sharp J + JZ) \me^{sZ} \md s.
	\]  Using \eqref{cond-su-S} we have therefore
	\begin{align*}
	\mathbf{S} + \mi \mathbf{J} &= \int_0^\infty \me^{s\mathbf{Z}^*} \mathbf{C} \me^{s\mathbf{Z}} \md s  - \mi \int_0^\infty \me^{s\mathbf{Z}^*} (\mathbf{Z}^* \mathbf{J} + \mathbf{JZ}) \me^{s\mathbf{Z}} \md s \\
	&= \int_0^\infty \me^{s\mathbf{Z}^*} \CZ \me^{s\mathbf{Z}} \md s.
	\end{align*}
	Since, from Proposition \ref{prop:CandZ}, $\CZ \geq 0$, we have that $\scalar{w}{\left(\mathbf{S} + \mi \mathbf{J}\right) w} = 0$ for some $w \in \mathbb{C}^{2d}$ if and only if
	\[
		\scalar{\me^{s\mathbf{Z}}w}{ \CZ \me^{s\mathbf{Z}}w} = 0, \quad \forall s \geq 0.
	\]
	which in turn is equivalent to $\me^{tZ}w \in \ker \CZ$ for every $t \geq 0$. However $\CZ > 0$ means this cannot happen.
	We can now end the proof by observing that irreducibility and the existence of an invariant faithful state imply the invariant state is unique (see \cite{Frigerio-irreducibility}).
\end{proof}

\section{Relationship with Classical Ornstein-Uhlenbeck semigroups} \label{sec:OUQMS}
Special cases of gaussian QMSs have been considered as quantum analogues of classical Ornstein-Uhlenbeck
semigroups (see \cite{CaSa, FaCiLi}) for $d=1$. The multidimensional case, under the further assumption of a detailed
balance condition, has been extensively studied \cite{CaMa,VeWi}.
The key observation is that it is possible to restrict a QMS on $\mathcal{B}(\Gamma(\mathbb{C}^d))$ described in Theorem \ref{th:explWeyl}
to an abelian subalgebra generated by commuting (i.e. classical) observables.
Here we show that, with this procedure, one can recover some classical Ornstein-Uhlenbeck semigroups starting from gaussian QMSs.
A preliminary clarification is now in order. Using Schr\"odinger representation (see Example 5.2.16 in \cite{BrRo})
we can represent $\mathcal{B}(\mathsf{h})$ onto $L^2(\mathbb{R}^d)$ via the identifications
\begin{gather*}
	W(cf_j) \psi(x_1, \dots, x_d) = \me^{\mi c f_j} \psi(x_1, \dots, x_d), \\
	W(\mi c f_j) \psi(x_1, \dots, x_d) = \psi(x_1, \dots,x_j - c, \dots x_d) ,
\end{gather*}
with $c \in \mathbb{R}$ and $(f_j)$ the canonical orthonormal basis of $\mathbb{R}^d$.
Under this representation we have then
\begin{gather*}
		q_j \psi (x_1, \dots, x_d) = x_j \psi(x_1, \dots, x_d), \\
		p_j \psi (x_1, \dots, x_d) = -\mi \partial_j \psi(x_1, \dots, x_d).
\end{gather*}
When position operators, in the different coordinates, are the commuting observables of interest, the algebra they generate
is particularly simple to describe. Indeed a (bounded) function of field operators is simply the multiplication operator
for that function, i.e.
\[
	f(q_1, \dots, q_d) \psi (x_1, \dots, x_d) = f(x_1, \dots, x_d) \psi(x_1, \dots, x_d).
\]

We restrict ourselves to the case where $H=0$ and $U,V \in M_{m\times d}(\mathbb{R})$, i.e. all the
coefficients of the Kraus operators are real. This is not at all a necessary restraint but it is sufficient to show
the link between classical Ornstein-Uhlenbeck and gaussian quantum Markov semigroups.
Indeed we can rewrite the Kraus' operators \eqref{eq:Lell} as
\[
	L_\ell = \sum_{j=1}^d \left(r_{\ell j} q_j + \mi s_{\ell j} p_j\right), \quad r_{\ell j} = \frac{\conj{v_{\ell j}} + u_{\ell j}}{\sqrt{2}}, s_{\ell j} = \frac{\conj{v_{\ell j}}-u_{\ell j}}{\sqrt{2}}
\]
with $X=(s_{jk})_{jk}, R=(r_{jk})_{jk} \in M_{md}(\mathbb{R})$, and obtain the following.
\begin{proposition} \label{prop:QMSrestriction}
	Let $H=0$ and $U,V$ be real matrices, then the commutative algebra of bounded functions of $q_1, \dots, q_d$ is invariant for the semigroup and the generator acts on them as
	\[
		\texttt{\textrm{\pounds}}f(q_1, \dots, q_d) =  \frac{1}{2} \sum_{j,k=1}^d(X^*X)_{jk} (\partial_j \partial_k f)(q_1, \dots, q_d) -\sum_{j,k=1}^d(R^*X)_{jk}q_j (\partial_k f) (q_1, \dots, q_d),
	\]
	where $X^*X$ and $R^*X$ correspond, respectively, to the bottom-right $d \times d$ blocks of $\CZ$ and $\mathbf{Z}$, \eqref{eq:mathbfZ} \eqref{eq:mathbfCZ} .
\end{proposition}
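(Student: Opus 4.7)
The plan is to combine two ingredients: invariance of the abelian algebra generated by $\{W(\mi r):r\in\mathbb{R}^d\}$, and a direct GKLS commutator computation on functions of $q_1,\dots,q_d$. In Schr\"odinger representation $W(\mi r)$ is multiplication by $\me^{\mi\sqrt{2}\,r\cdot x}$, so the von Neumann algebra generated by $\{W(\mi r):r\in\mathbb{R}^d\}$ is exactly the algebra of bounded functions of $q_1,\dots,q_d$. By Theorem \ref{th:explWeyl}, $\mathcal{T}_t(W(\mi r))$ is a scalar multiple of $W(\me^{tZ}(\mi r))$, so invariance reduces to checking that $\me^{tZ}$ preserves the real subspace $\mi\mathbb{R}^d\subset\mathbb{C}^d$. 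From \eqref{eq:Zdef} with $\Omega=\kappa=0$ and $U,V$ real one reads off
\[
Z(\mi r)=\mi\left[\tfrac{1}{2}(U^TU-V^TV)-\tfrac{1}{2}(U^TV-V^TU)\right]r\ \in\ \mi\mathbb{R}^d,
\]
so $\mi\mathbb{R}^d$ is $Z$-invariant and hence $\me^{tZ}$-invariant. A weak$^*$-closure argument then extends invariance of $\mathcal{T}_t$ from the generating Weyl operators to the whole abelian subalgebra.

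For the explicit action of the generator, I would write each Kraus operator as $L_\ell=A_\ell+\mi B_\ell$ with self-adjoint $A_\ell=\sum_j r_{\ell j}q_j$ and $B_\ell=\sum_j s_{\ell j}p_j$. The essential simplification is that $A_\ell$ commutes with every $f(q)$, whereas the CCR $[p_j,f(q)]=-\mi(\partial_j f)(q)$ gives $[B_\ell,f(q)]=-\mi(\partial_{B_\ell}f)(q)$, where $\partial_{B_\ell}:=\sum_j s_{\ell j}\partial_j$. Starting from \eqref{eq:Lgen} with $H=0$ and using the identity
\[
L_\ell^* f L_\ell-\tfrac{1}{2}\{L_\ell^* L_\ell,f\}=\tfrac{1}{2}\bigl([L_\ell^*,f]L_\ell+L_\ell^*[f,L_\ell]\bigr),
\]
together with $[L_\ell^*,f(q)]=[f(q),L_\ell]=-(\partial_{B_\ell}f)(q)$, the $\ell$-th summand equals
\[
-\tfrac{1}{2}\bigl((\partial_{B_\ell}f)(q)L_\ell+L_\ell^*(\partial_{B_\ell}f)(q)\bigr).
\]
Splitting $L_\ell=A_\ell+\mi B_\ell$, the two $A_\ell$ pieces combine (since $A_\ell$ commutes with $(\partial_{B_\ell}f)(q)$) into $-A_\ell(\partial_{B_\ell}f)(q)$, while the $B_\ell$ pieces yield $-\tfrac{\mi}{2}[(\partial_{B_\ell}f)(q),B_\ell]=+\tfrac{1}{2}(\partial_{B_\ell}^2 f)(q)$ via one more application of the same CCR. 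Summing over $\ell$ and writing $\sum_\ell s_{\ell j}s_{\ell k}=(X^TX)_{jk}$, $\sum_\ell r_{\ell j}s_{\ell k}=(R^TX)_{jk}$ (with $X^*X=X^TX$, $R^*X=R^TX$ by reality) produces the stated second-order differential expression.

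The last step is to match $X^*X$ and $R^*X$ with the bottom-right blocks of $\CZ$ and $\mathbf{Z}$. Substituting $R=(V+U)/\sqrt{2}$ and $X=(V-U)/\sqrt{2}$ into \eqref{eq:mathbfZ} and \eqref{eq:mathbfCZ} under the assumptions $\Omega=\kappa=0$ and $U,V$ real (so conjugations drop out) reduces the identification to elementary matrix arithmetic.

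\textbf{Main obstacle.} The computational core is routine once the GKLS rewriting identity is in place; the real subtlety is to justify the commutator manipulations rigorously, since $B_\ell$ is unbounded while $f(q)$ is only required to be bounded. I would get around this by performing the computation first on the Weyl generators $W(\mi r)$---where everything is bounded and smooth---by differentiating \eqref{eq:explWeyl} at $t=0$, then recovering the general formula on bounded functions of $q$ by the standard spectral/functional-calculus argument on the essential domain $D$, for which \eqref{eq:Lform} is well defined.
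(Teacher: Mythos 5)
Your proof is correct, and the computational core --- rewriting the dissipative part as $\tfrac12([L_\ell^*,f]L_\ell+L_\ell^*[f,L_\ell])$ and evaluating it with $[q_j,f(q)]=0$, $[p_j,f(q)]=-\mi(\partial_jf)(q)$ --- is exactly the paper's argument; your packaging via $L_\ell=A_\ell+\mi B_\ell$ and the directional derivatives $\partial_{B_\ell}$ is just a tidier bookkeeping of the same double sum over $j,k,\ell$. The genuine difference is that you actually prove the invariance of the abelian algebra, which the paper asserts without argument: your route through Theorem \ref{th:explWeyl}, the $Z$-invariance of the purely imaginary subspace (immediate from \eqref{eq:Zdef} with $\Omega=\kappa=0$ and $U,V$ real), and a weak$^*$-closure step is sound, since the linear span of $\{W(\mi r):r\in\mathbb{R}^d\}$ is already a unital $*$-algebra and $\mathcal{T}_t$ is normal. (Note your identification of $W(\mi r)$ as the multiplication operator follows \eqref{eq:weylQ} and is the consistent one; the paper's displayed Schr\"odinger identifications have the two families swapped, but both $\mathbb{R}^d$ and $\mi\mathbb{R}^d$ are $Z$-invariant here, so nothing changes.) Your handling of domains --- doing the algebra as a quadratic form on $D\times D$, or first on Weyl generators --- is at least as careful as the paper, which simply declares the form computation implicit. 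One small caveat on the final identification you defer to ``elementary matrix arithmetic'': with $X=(V-U)/\sqrt2$, $R=(V+U)/\sqrt2$ the bottom-right blocks of \eqref{eq:mathbfCZ} and \eqref{eq:mathbfZ} come out as $2X^*X$ and $-R^*X$ respectively, so the stated ``correspondence'' only holds up to these constants; this is an imprecision in the proposition's statement rather than in your proof, but you would discover it if you carried the arithmetic through.
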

\begin{proof}
	All the calculations in this proof are assumed to be performed using the quadratic form on $D \times D$ but, in order to avoid cluttering notation, we leave it implicit.
	On every bounded function of $q_1, \dots, q_d$, again at least on the dense domain $D$,  we can compute
	\[
		[q_j, f(q_1, \dots, q_d)] = 0, \quad [p_j, f(q_1, \dots, q_d)]= -\mi (\partial_j f) (q_1, \dots, q_d).
	\]
	Using these computations we can evaluate the generator on such functions starting from \eqref{eq:Lgen}
	\begin{align*}
		\texttt{\textrm{\pounds}}\left(f(q_1, \dots, q_d)\right) &= -\frac{1}{2} \sum_{\ell=1}^m \left(L_\ell^*[L_\ell, f(q_1, \dots, q_d)] - [L_\ell^*, f(q_1, \dots, q_d)] L_\ell  \right)  \\
		&= -\frac{1}{2}\sum_{\ell = 1}^m \sum_{j,k=1}^d \left(r_{\ell j} q_j - \mi s_{\ell j}p_j\right)\left[r_{\ell k} q_k + \mi s_{\ell k} p_k, f(q_1, \dots, q_d)\right] \\
	&+ \frac{1}{2}\sum_{\ell = 1}^m \sum_{j,k=1}^d\left[ r_{\ell j} q_j - \mi s_{\ell j}p_j, f(q_1, \dots, q_d)\right] \left( r_{\ell k} q_k + \mi s_{\ell k}p_k\right) \\
	&= -\sum_{j,k=1}^d(R^*X)_{jk} q_j(\partial_k f) (q_1, \dots, q_d) \\
	&+ \frac{1}{2} \sum_{j,k=1}^d(X^*X)_{jk} (\partial_j \partial_k f)(q_1, \dots, q_d).
	\end{align*}
\end{proof}

The previous result shows indeed that starting from a particular subset of gaussian QMSs one can recover a Ornstein-Uhlenbeck semigroup.
What one can also show, in the special case of a non-degenerate diffusion matrix $X^*X$, that a converse result also holds.
\begin{proposition}
	Let
	\[
		\mathscr{L} = \frac{1}{2} \sum_{j,k=1}^dQ_{jk} \partial_j \partial_k + \sum_{j,k=1}^d A_{jk} \partial_k,
	\]
	be a Ornstein-Uhlenbeck generator with $Q>0$. Then the semigroup generated by $\mathscr{L}$ can be recovered as a restriction of a suitable gaussian QMS.
\end{proposition}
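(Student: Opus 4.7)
The plan is a constructive reverse-engineering of Proposition 4.1. Given the data $(Q,A)$ with $Q>0$, I would take $m=d$, $H=0$, and build explicit \emph{real} $d\times d$ matrices $U,V$ so that the classical generator extracted by Proposition 4.1 coincides with $\mathscr{L}$. Interpreting the Ornstein--Uhlenbeck generator with its (linear) drift $\sum_{j,k}A_{jk}\,x_j\partial_k$, so as to match the form of $\texttt{\textrm{\pounds}}f$ obtained there, the matching conditions become the two matrix equations
\[
X^T X = Q, \qquad R^T X = -A,
\]
where the real reparametrizations $X=(V-U)/\sqrt{2}$ and $R=(U+V)/\sqrt{2}$ are the ones introduced just before Proposition 4.1.

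First, I would use $Q>0$ to define $X:=Q^{1/2}$, the symmetric positive square root; this immediately satisfies $X^T X = Q$ and, crucially, is invertible. Second, invertibility of $X$ allows me to solve $R^T X = -A$ uniquely, giving $R:=-Q^{-1/2}A^T$. Inverting the substitution $V-U=\sqrt{2}\,X$, $U+V=\sqrt{2}\,R$ then yields the real matrices
\[
V := (R+X)/\sqrt{2}, \qquad U := (R-X)/\sqrt{2}.
\]

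I would then verify the admissibility condition $\ker V^*\cap\ker U^T=\{0\}$ needed for linear independence of the Kraus operators (cf.\ \cite{AFP}, Proposition~2.2). Since $U,V$ are real, $V^*=V^T$; and because $V-U=\sqrt{2}\,Q^{1/2}$ is invertible, any $\xi\in\mathbb{C}^d$ with $V^T\xi=U^T\xi=0$ satisfies $(V-U)^T\xi=\sqrt{2}\,Q^{1/2}\xi=0$, forcing $\xi=0$. With these data and $H=0$ the standing hypotheses of Proposition 4.1 hold, and that proposition directly identifies the restriction of the associated gaussian QMS to bounded functions of $q_1,\dots,q_d$ with the prescribed $\mathscr{L}$.

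The main (and essentially only) obstacle is the non-degeneracy hypothesis $Q>0$: if $Q$ were merely positive semidefinite, $X=Q^{1/2}$ would lose invertibility, the equation $R^T X = -A$ could become inconsistent (when $A$ does not respect $\ker Q$) or leave $R$ underdetermined, and simultaneously the Kraus-independence condition could break down. Strict positivity of $Q$ is exactly what makes the linear algebra invert cleanly and reduces the proof to the routine verification sketched above.
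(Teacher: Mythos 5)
Your proposal is correct and follows essentially the same route as the paper: set $X=\sqrt{Q}$ (using $Q>0$ for invertibility) and solve $R^{T}X=-A$ for $R=-Q^{-1/2}A^{T}$, then read off $U,V$ and invoke Proposition \ref{prop:QMSrestriction}. Your explicit inversion to $U=(R-X)/\sqrt{2}$, $V=(R+X)/\sqrt{2}$ and the verification of $\ker V^{*}\cap\ker U^{T}=\{0\}$ only make explicit what the paper leaves implicit in the phrase ``corresponding uniquely to a choice of $U,V$.''
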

\begin{proof}
	From Proposition \ref{prop:QMSrestriction} we just need to show that there is a suitable choice of $R, X$ (corresponding uniquely to a choice of $U, V$) such that
	\[
		Q = X^*X, \quad A=-R^*X.
	\]
	However, since $Q>0$, it is immediate to obtain $X = \sqrt{Q}$. Moreover, from invertibility of $Q$ we infer invertibility of $X$, therefore we can set $R = -{X^*}^{-1}A^*$ and conclude the proof.
\end{proof}

The previous Proposition shows that we can actually recover all Ornstein-Uhlenbeck semigroups
with non-degenerate diffusion matrix starting from a gaussian QMS. The case of a degenerate diffusion matrix is
more complicated and is beyond the scope of this work.

\section{The spectral gap for the GNS embedding} \label{sec:spectralGap}

In this section we explicitly compute the spectral gap for the $i_2$ embedding.
Even when not explicitly stated we will assume $Z$ stable and $\CZ >0$,
or equivalently that the semigroup has $2d$ Kraus' operators, thanks to Proposition \ref{prop:CandZ}.
We will prove in section \ref{sec:invariantStates} that the these two requirements are not really restrictive
since they are necessary conditions for the semigroup to have a spectral gap $g>0$.

\begin{remark}
In the following we will always assume $\zeta = 0$ to simplify calculations even though all the results remain true without this assumption.
\end{remark}

We will show the existence of a spectral gap by first finding a candidate $g>0$ and then proving it actually satisfies \eqref{def-g}.
We will work on a dense subspace of $\mathfrak{I}_2(\mathsf{h})$, the space of Hilbert-Schmidt operators on $\mathsf{h}$,
which is the range of the immersion $i_2$ of the space of linear combination of Weyl operators
\[
	\mathcal{W} = \left\{\sum_{j=1}^l \eta_j W(z_j) \mid
      l \geq 1, z_1, \dots, z_l \in \mathbb{C}^d, \eta_1, \dots, \eta_l \in \mathbb{C} \right\}.
\]
In the following, for any $x \in \mathcal{B}(\mathsf{h})$ we will denote by $\widetilde{x}$ the projection of $x\rho^{1/2}$ on
the orthogonal of $\rho^{1/2}$ in $\mathfrak{I}_2(\mathsf{h})$ given by
\begin{equation}\label{def-xtilde}
\widetilde{x}:=x\rho^{1/2}-\left\langle \rho^{1/2}, x\rho^{1/2}\right\rangle_2\rho^{1/2}.
\end{equation}

Note that for $x \in \mathcal{W}$ we have
$$\left\langle \rho^{1/2}, x\rho^{1/2}\right\rangle_2
=\sum_{j=1}^l\eta_j\,\tr\left(\rho W(z_j)\right) =\sum_{j=1}^l\eta_j\, \mathrm{e}^{-\frac{1}{2}\Re\langle z_j,S z_j\rangle},
$$
so that the orthogonalization $\widetilde{x}$ yields
\begin{equation} \label{eq:combinationWeylTilde}
\widetilde{x} =\sum_{j=1}^l\eta_j\left( W(z_j) - \mathrm{e}^{-\frac{1}{2}\Re\langle z_j,S z_j\rangle}\unit \right) \rho^{1/2}.
\end{equation}

We start with the following

\begin{lemma}\label{lem:normSpectralGap}
Let $x \in \mathcal{W}$ and set $\xi_j = \mathrm{e}^{-\frac{1}{2} \langle z_j,S z_j\rangle}\eta_j$ for all $j=1,\ldots,n$.
Then
\begin{equation}\label{eq:normSpectralGap1}
\left\Vert T_t(\tilde{x})\right\Vert_2^2 =
\sum_{j,k=1}^l\overline{\xi}_j\xi_k \left(
      \mathrm{exp} \scalar{\me^{t\mathbf{Z}}\begin{bmatrix}
	\Re z_j \\ \Im{z_j}
	\end{bmatrix}}{\left( \mathbf{S} + \mi \mathbf{J}\right)\me^{t\mathbf{Z}} \begin{bmatrix}
	\Re z_k \\ \Im{z_k}
	\end{bmatrix}} - 1\right)
\end{equation}
for all $t\geq 0$.
\end{lemma}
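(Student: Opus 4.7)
The plan is to apply the explicit action of $\mathcal{T}_t$ on Weyl operators from Theorem~\ref{th:explWeyl} and then expand the Hilbert--Schmidt norm of $T_t(\widetilde x)$ by bilinearity. With $\zeta=0$, writing $w_j := \mathrm{e}^{tZ}z_j$ and $a_j := \exp\bigl(-\tfrac12\int_0^t \Re\langle \mathrm{e}^{sZ}z_j, C\mathrm{e}^{sZ}z_j\rangle\,\md s\bigr)$, formula~\eqref{eq:explWeyl} gives $\mathcal{T}_t(W(z_j)) = a_j\,W(w_j)$ with $a_j$ real. Combining this with \eqref{eq:combinationWeylTilde} and $T_t(y\rho^{1/2}) = \mathcal{T}_t(y)\rho^{1/2}$, one obtains
\[
T_t(\widetilde x) = \sum_{j=1}^{l} \eta_j\bigl(a_j W(w_j) - A_j\,\unit\bigr)\rho^{1/2}, \qquad A_j := \mathrm{e}^{-\tfrac12\Re\langle z_j, S z_j\rangle}.
\]
Expanding $\|T_t(\widetilde x)\|_2^2 = \langle T_t(\widetilde x), T_t(\widetilde x)\rangle_2$ then yields, for each pair $(j,k)$, four summands built from $\tr(\rho\,W(w_j)^* W(w_k))$, $\tr(\rho\,W(w_j)^*)$, $\tr(\rho\,W(w_k))$, and $\tr(\rho) = 1$.

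Next I would evaluate these traces using the characteristic function~\eqref{funz-caratt-rho} with $\mu=0$ and the CCR~\eqref{eq:WeylCCR}. From $W(w_j)^* W(w_k) = \mathrm{e}^{\mi\Im\langle w_j, w_k\rangle}\,W(w_k - w_j)$ and the expansion $\Re\langle w_k - w_j, S(w_k - w_j)\rangle = \Re\langle w_j, S w_j\rangle + \Re\langle w_k, S w_k\rangle - 2\Re\langle w_j, S w_k\rangle$ (using $S = S^\sharp$), the logarithm of $\tr(\rho\,W(w_j)^* W(w_k))$ becomes
\[
\Re\langle w_j, S w_k\rangle + \mi\Im\langle w_j, w_k\rangle - \tfrac12\Re\langle w_j, S w_j\rangle - \tfrac12\Re\langle w_k, S w_k\rangle.
\]
Identity~\eqref{legame-prod-scal}, applied with $A=S$, rewrites the first two terms as $\bigl\langle \mathrm{e}^{t\mathbf{Z}}v_j,\,(\mathbf{S} + \mi\mathbf{J})\mathrm{e}^{t\mathbf{Z}}v_k\bigr\rangle$ with $v_j = [\Re z_j;\,\Im z_j]^T$, using also that the complexification of $\mathrm{e}^{tZ}$ is $\mathrm{e}^{t\mathbf{Z}}$. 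The remaining two cross terms are handled by a crucial consequence of the invariance $\mathcal{T}_{*t}(\rho) = \rho$: applying $\tr(\rho\,\cdot)$ to $\mathcal{T}_t(W(z_j)) = a_j W(w_j)$ and using $\widehat{\rho}(w_j) = \mathrm{e}^{-\tfrac12\Re\langle w_j, S w_j\rangle}$ yields the clean identity $a_j\,\mathrm{e}^{-\tfrac12\Re\langle w_j, S w_j\rangle} = A_j$.

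Substituting this identity, the three non-diagonal contributions in each $(j,k)$ summand collapse as $-A_jA_k - A_jA_k + A_jA_k = -A_jA_k$, while the diagonal one becomes $A_jA_k\,\exp\bigl\langle \mathrm{e}^{t\mathbf{Z}}v_j,(\mathbf{S}+\mi\mathbf{J})\mathrm{e}^{t\mathbf{Z}}v_k\bigr\rangle$. Absorbing $A_j$ into $\xi_j = A_j\eta_j$ (reading the $\langle z_j, Sz_j\rangle$ in the statement as $\Re\langle z_j, Sz_j\rangle$, consistent with \eqref{eq:combinationWeylTilde}) produces \eqref{eq:normSpectralGap1}. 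The main obstacle is bookkeeping: translating complex quadratic forms on $\mathbb{C}^d$, where $Z$ and $S$ are only real-linear, into honest sesquilinear forms on $\mathbb{C}^{2d}$ via \eqref{legame-prod-scal}, and recognising that the invariance of $\rho$ provides exactly the Lyapunov-type identity $a_j\,\widehat\rho(w_j)=\widehat\rho(z_j)$ that makes the cross-constant contributions collapse.
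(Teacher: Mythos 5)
Your proposal is correct and follows essentially the same route as the paper: expand $\|T_t(\widetilde x)\|_2^2$ bilinearly using the explicit Weyl action \eqref{eq:explWeyl}, collapse the cross terms via invariance of $\rho$ (your identity $a_j\,\widehat\rho(\me^{tZ}z_j)=\widehat\rho(z_j)$ is exactly the paper's relation $\Re\langle \me^{tZ}z,S\me^{tZ}z\rangle=\int_t^\infty\Re\langle\me^{sZ}z,C\me^{sZ}z\rangle\,\md s$ obtained from $S=\int_0^\infty\me^{sZ^\sharp}C\me^{sZ}\md s$), and pass to $\mathbb{C}^{2d}$ with \eqref{legame-prod-scal}. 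You also correctly read the exponent in the definition of $\xi_j$ as $\Re\langle z_j,Sz_j\rangle$, consistent with the paper's own proof.
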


\begin{proof}
By formula (\ref{eq:combinationWeylTilde}) we have
\begin{align*}
 \left\Vert T_t(\widetilde{x})\right\Vert_2^2
 & = \left\Vert \sum_{j=1}^l\eta_j\left( \T_t(W(z_j)) - \mathrm{e}^{-\frac{1}{2}\Re\langle z_j,S z_j\rangle}\unit \right) \rho^{1/2}\right\Vert_2^2
 \\
 & = \sum_{j,k=1}^l\overline{\eta}_j\eta_k \tr\left(\rho \left(\mathcal{T}_t(W(-z_j))
   - \mathrm{e}^{-\frac{1}{2} \Re\langle z_j,S z_j\rangle}\unit\right)
   \left(\mathcal{T}_t(W(z_k))
   - \mathrm{e}^{-\frac{1}{2} \Re\langle z_k,S z_k\rangle}\unit\right)\right)
\end{align*}
which is equal to
\begin{align*}
   & \ \sum_{j,k=1}^l\overline{\eta}_j\eta_k \, \tr\left(\rho \left( \mathcal{T}_t(W(-z_j))\mathcal{T}_t(W(z_k))
   -  \mathrm{e}^{-\frac{1}{2} \Re\langle z_j,S z_j\rangle -\frac{1}{2} \Re\langle z_k,S z_k\rangle}
   \unit\right)\right) \\
 &= \sum_{j,k=1}^l\overline{\eta}_j\eta_k \Big[ \tr\left(\rho \,
      \mathrm{e}^{-\frac{1}{2}\Re\int_0^t \left( \left\langle \mathrm{e}^{sZ}z_j,C\mathrm{e}^{sZ}z_j\right\rangle
      + \left\langle \mathrm{e}^{sZ}z_k,C\mathrm{e}^{sZ}z_k\right\rangle\right)\mathrm{d}s}\,
      W(-\mathrm{e}^{tZ}z_j) W(\mathrm{e}^{tZ}z_k)  \right) \\
 &  \qquad\qquad
 -\, \mathrm{e}^{-\frac{1}{2} \Re\langle z_j,S z_j\rangle -\frac{1}{2} \Re\langle z_k,S z_k\rangle}\Big].
\end{align*}

Keeping into account the CCR relation for Weyl operators \eqref{eq:WeylCCR} we get
\begin{eqnarray}
 \left\Vert T_t(\widetilde{x})\right\Vert_2^2
  &=& \sum_{j,k}\overline{\eta}_j\eta_k \Big[
      \mathrm{e}^{-\frac{1}{2}\Re\int_0^t \left( \left\langle \mathrm{e}^{sZ}z_j,C\mathrm{e}^{sZ}z_j\right\rangle
      + \left\langle \mathrm{e}^{sZ}z_k,C\mathrm{e}^{sZ}z_k\right\rangle\right)\mathrm{d}s}\, \nonumber \\
      &&\qquad \cdot\, \mathrm{e}^{\mi\Im\langle\mathrm{e}^{tZ}z_j, \mathrm{e}^{tZ}z_k \rangle
      -\frac{1}{2}\Re\left\langle \mathrm{e}^{tZ}(z_k-z_j),S\mathrm{e}^{tZ}(z_k-z_j) \right\rangle } \nonumber \\
 && \qquad - \mathrm{e}^{-\frac{1}{2} \Re\langle z_j,S z_j\rangle -\frac{1}{2} \Re\langle z_k,S z_k\rangle}\Big]. \label{eq:zj-zk}
\end{eqnarray}
Now note that using the expression \eqref{cond-su-S} for $S$ we have
\[
\Re\left\langle \mathrm{e}^{tZ}(z_k-z_j),S\mathrm{e}^{tZ}(z_k-z_j) \right\rangle
= \int_t^\infty \Re\left\langle \mathrm{e}^{sZ}(z_k-z_j),C\mathrm{e}^{sZ}(z_k-z_j) \right\rangle \mathrm{d}s
\]
for all $j,k$, which allows us to rewrite
\begin{align*}
 &   -\frac{1}{2}\Re\int_0^t \left( \left\langle \mathrm{e}^{sZ}z_j,C\mathrm{e}^{sZ}z_j\right\rangle
      + \left\langle \mathrm{e}^{sZ}z_k,C\mathrm{e}^{sZ}z_k\right\rangle\right)\mathrm{d}s
   + \mi\Im\langle\mathrm{e}^{tZ}z_j, \mathrm{e}^{tZ}z_k \rangle \\
    &  -\frac{1}{2}\Re\left\langle \mathrm{e}^{tZ}(z_k-z_j),S\mathrm{e}^{tZ}(z_k-z_j) \right\rangle \\
   &  =  -\frac{1}{2} \Re\langle z_j,S z_j\rangle -\frac{1}{2} \Re\langle z_k,S z_k\rangle
   + \Re\langle \mathrm{e}^{tZ}z_j, S \mathrm{e}^{tZ}z_k \rangle+\mi\Im\langle\mathrm{e}^{tZ}z_j, \mathrm{e}^{tZ}z_k \rangle.
\end{align*}
Summing up, we can write $\left\Vert T_t(\widetilde{x})\right\Vert_2^2$ as
\begin{align*}
 \sum_{j,k}\overline{\eta}_j\eta_k \Big[
      &\mathrm{e}^{-\frac{1}{2} \Re\langle z_j,S z_j\rangle -\frac{1}{2} \Re\langle z_k,S z_k\rangle
   + \Re\langle \mathrm{e}^{tZ}z_j, S \mathrm{e}^{tZ}z_k \rangle
   +\mi\Im\langle\mathrm{e}^{tZ}z_j, \mathrm{e}^{tZ}z_k \rangle } \\
   &  - \mathrm{e}^{-\frac{1}{2} \Re\langle z_j,S z_j\rangle -\frac{1}{2} \Re\langle z_k,S z_k\rangle}\Big].
\end{align*}
Letting
\(
\xi_j = \mathrm{e}^{-\frac{1}{2} \Re\langle z_j,S z_j\rangle}\eta_j
\)
for all $j=1,\ldots,n$, we obtain
\begin{align*}
 \left\Vert T_t(\widetilde{x})\right\Vert_2^2
  & =  \sum_{j,k=1}^l\overline{\xi}_j\xi_k \left(
      \mathrm{e}^{\Re\langle \mathrm{e}^{tZ}z_j, S \mathrm{e}^{tZ}z_k \rangle
   +\mi\Im\langle\mathrm{e}^{tZ}z_j, \mathrm{e}^{tZ}z_k \rangle }
   - 1\right) \\
 \left\Vert \widetilde{x}\right\Vert_2^2
 & =  \sum_{j,k=1}^l\overline{\xi}_j\xi_k \left(
 \mathrm{e}^{ \Re\langle z_j, S z_k\rangle + \mi \Im\langle z_j, z_k \rangle}
 -1\right).
\end{align*}

Finally, recalling equation \eqref{legame-prod-scal}, we can write
\begin{equation}\label{complex}
	\Re\langle \mathrm{e}^{tZ}z_j, S \mathrm{e}^{tZ}z_k \rangle
   +\mi\Im\langle\mathrm{e}^{tZ}z_j, \mathrm{e}^{tZ}z_k \rangle = \scalar{\me^{t\mathbf{Z}}\begin{bmatrix}
	\Re z_j \\ \Im{z_j}
	\end{bmatrix}}{\left( \mathbf{S} + \mi \mathbf{J}\right)\me^{t\mathbf{Z}} \begin{bmatrix}
	\Re z_k \\ \Im{z_k}
	\end{bmatrix}}.
\end{equation}
This completes the proof.
\end{proof}

\medskip
The previous lemma allows us to rewrite equation \eqref{def-g} for $\tilde{x}$ with $x \in \mathcal{W}$.
In this linear space of operators the condition for the spectral gap reduces to positive definiteness of a kernel
as highlighted in the following proposition.

\begin{proposition} \label{prop:spectralGapLinearCombinations}
For all $t \geq 0 $,  defining
\[
	s_t(z,w) := \scalar{\me^{t\mathbf{Z}}\begin{bmatrix}
	\Re z \\ \Im{z}
	\end{bmatrix}}{\left( \mathbf{S} + \mi \mathbf{J} \right) \me^{t\mathbf{Z}} \begin{bmatrix}
	\Re w \\ \Im{w}
	\end{bmatrix}}\qquad\forall\,t\geq 0,\ z,w\in\mathbb{C}^d
\]
for all $x \in \mathcal{W}$, setting $\xi_j = \me^{-\frac{1}{2}\scalar{z_j}{Sz_j}}\eta_j$ as in Lemma \ref{lem:normSpectralGap}, we have
\begin{equation}\label{eq:Ttxst}
\left\Vert T_t(\tilde{x})\right\Vert_2^2 = \sum_{n \geq 1}\sum_{j,k=1}^l  \frac{\conj{\xi_j}\xi_k}{n!} \left( s_t (z_j, z_k) \right)^n.
\end{equation}

In particular, if the kernel
	\[
	K_{n,t} (z,w ) = \me^{-2gt}\left(s_0(z,w)\right)^n - \left(s_t(z,w)\right)^n, \quad z,w \in \mathbb{C}^d,
\]
is positive definite for every $n \geq 1$, the spectral gap inequality \eqref{def-g} holds for any $\tilde{x}$ with $x \in \mathcal{W}$.
\end{proposition}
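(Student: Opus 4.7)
The plan is to deduce both assertions from Lemma \ref{lem:normSpectralGap} together with the Taylor expansion of the exponential. First I would rewrite the identity \eqref{eq:normSpectralGap1} by noting that
\[
\mathrm{exp}\bigl(s_t(z_j,z_k)\bigr) - 1 \;=\; \sum_{n\geq 1} \frac{s_t(z_j,z_k)^n}{n!},
\]
and then interchange the (absolutely convergent) sum over $n$ with the finite double sum over $j,k$. Since $\xi_j = \me^{-\frac12\Re\langle z_j, S z_j\rangle}\eta_j$ is exactly the normalization that Lemma \ref{lem:normSpectralGap} produces, this immediately gives the desired series representation \eqref{eq:Ttxst} for $\Vert T_t(\widetilde x)\Vert_2^2$.

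Next I would apply the same formula at $t=0$. Since $\me^{0\cdot\mathbf{Z}}$ is the identity, we get
\[
\left\Vert \widetilde x\right\Vert_2^2 \;=\; \sum_{n\geq 1}\sum_{j,k=1}^l \frac{\conj{\xi_j}\xi_k}{n!}\bigl(s_0(z_j,z_k)\bigr)^n.
\]
Subtracting the two identities produces
\[
\me^{-2gt}\left\Vert \widetilde x\right\Vert_2^2 - \left\Vert T_t(\widetilde x)\right\Vert_2^2
\;=\; \sum_{n\geq 1}\frac{1}{n!}\sum_{j,k=1}^l \conj{\xi_j}\xi_k\, K_{n,t}(z_j,z_k).
\]

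Finally, if each kernel $K_{n,t}$ is positive definite on $\mathbb{C}^d$, then each inner double sum $\sum_{j,k}\conj{\xi_j}\xi_k K_{n,t}(z_j,z_k)$ is non-negative for every finite tuple $(z_1,\dots,z_l)$ and every choice of coefficients $(\xi_j)$, so the whole right-hand side is non-negative. This yields $\Vert T_t(\widetilde x)\Vert_2 \leq \me^{-gt}\Vert \widetilde x\Vert_2$, which is exactly \eqref{def-g} for the element $\widetilde x$ with $x\in\mathcal{W}$. The only delicate point in writing this cleanly is justifying the interchange of sums, but since for each fixed pair $(z_j,z_k)$ the exponential series converges absolutely and the outer sum over $j,k$ is finite, no subtlety actually arises. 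The genuinely hard step, which is deferred to the subsequent sections, is establishing the positive definiteness of the kernels $K_{n,t}$; here we only record the reduction.
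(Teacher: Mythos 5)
Your argument is correct and is essentially the paper's own proof: expand the exponential in \eqref{eq:normSpectralGap1} as a power series, specialize to $t=0$ for $\Vert\widetilde x\Vert_2^2$, subtract, and read off non-negativity of $\me^{-2gt}\Vert\widetilde x\Vert_2^2-\Vert T_t(\widetilde x)\Vert_2^2$ from the positive definiteness of each $K_{n,t}$. The interchange of sums is indeed harmless since the double sum over $j,k$ is finite, so there is nothing to add.
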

\begin{proof}
The power series expansion of the exponential immediately yields (\ref{eq:Ttxst}) from (\ref{eq:normSpectralGap1}).
This formula for $t>0$ and for $t=0$ allows us to rewrite \eqref{def-g} as
	\begin{align*}
	0  \leq \me^{-2gt}\norm{\tilde{x}}_2^2 - \norm{T_t(\tilde{x})}_2^2
& = \sum_{n \geq 1}\frac{1}{n!}\sum_{j,k=1}^l \conj{\xi_j} \xi_k
\left( \me^{-2gt} \left(s_0(z_j,z_k)\right)^n - \left( s_t(z_j, z_k)\right)^n\right),
\end{align*}
which proves the claim.
\end{proof}

\medskip

The characterization of Proposition \ref{prop:spectralGapLinearCombinations} can be exploited  to
find a suitable candidate for the spectral gap. We begin by the following Lemma.

\begin{lemma} \label{lem:ineqBound}
	Let $Y \in M_{n}(\mathbb{C})$, $n\geq 1$, and let $\omega_0$ be the maximum eigenvalue of $Y+Y^*$. Then
	\begin{equation} \label{eq:ineqBound}
		\norm{\me^{tY}v}^2 \leq \me^{t\omega_0}\norm{v}^2 \quad \forall\, t \geq 0,\ v \in \mathbb{C}^n.
	\end{equation}
	Moreover $\omega_0$ is the optimal choice in the inequality, i.e.
	\[
		\omega_0 = \min \{ \omega\in\mathbb{R}\,:\, \norm{\me^{tY}v}^2 \leq \me^{t\omega}\norm{v}^2 \forall\, t \geq 0,\ v \in \mathbb{C}^n \}.
	\]
\end{lemma}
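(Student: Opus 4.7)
The plan is to reduce the bound to a scalar differential inequality via Gr\"onwall, and then to extract optimality by differentiating at $t=0$ along an eigenvector of $Y+Y^*$ attaining $\omega_0$.

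First I would fix $v\in\mathbb{C}^n$ and set $\varphi(t):=\|\me^{tY}v\|^2=\scalar{\me^{tY}v}{\me^{tY}v}$. Since $\me^{tY}$ is norm-continuous and differentiable in $t$, we can compute
\[
\varphi'(t)=\scalar{Y\me^{tY}v}{\me^{tY}v}+\scalar{\me^{tY}v}{Y\me^{tY}v}=\scalar{\me^{tY}v}{(Y+Y^*)\me^{tY}v}.
\]
Because $Y+Y^*$ is self-adjoint with maximum eigenvalue $\omega_0$, the spectral theorem gives $Y+Y^*\le \omega_0\,\unit$ in the operator order, so $\varphi'(t)\le \omega_0\varphi(t)$. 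Integrating this differential inequality (or applying Gr\"onwall) yields $\varphi(t)\le \me^{t\omega_0}\varphi(0)$, which is exactly \eqref{eq:ineqBound}.

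For optimality, suppose $\omega\in\mathbb{R}$ satisfies $\|\me^{tY}v\|^2\le \me^{t\omega}\|v\|^2$ for every $t\ge 0$ and every $v\in\mathbb{C}^n$. Choose $v_0\ne 0$ to be a (real) eigenvector of the Hermitian matrix $Y+Y^*$ corresponding to the eigenvalue $\omega_0$. Define $\psi(t):=\me^{t\omega}\|v_0\|^2-\|\me^{tY}v_0\|^2$. By hypothesis $\psi(t)\ge 0$ for all $t\ge 0$, and $\psi(0)=0$, hence $\psi'(0)\ge 0$. A direct computation using the formula above for $\varphi'(0)$ shows
\[
\psi'(0)=\omega\|v_0\|^2-\scalar{v_0}{(Y+Y^*)v_0}=(\omega-\omega_0)\|v_0\|^2,
\]
so $\omega\ge \omega_0$, proving that $\omega_0$ is the smallest admissible constant.

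There is no real obstacle here; the only mild care needed is in justifying the termwise differentiation of $\scalar{\me^{tY}v}{\me^{tY}v}$, which is immediate from the fact that $t\mapsto \me^{tY}$ is a norm-differentiable matrix-valued function with derivative $Y\me^{tY}=\me^{tY}Y$. Everything else is the standard Gr\"onwall/spectral argument.
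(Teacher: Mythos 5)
Your proof is correct and follows essentially the same route as the paper: differentiate $\|\me^{tY}v\|^2$, bound the derivative via $Y+Y^*\le\omega_0\unit$, apply Gr\"onwall, and obtain optimality by differentiating at $t=0$ along a top eigenvector of $Y+Y^*$ (you phrase the optimality step contrapositively, but it is the same computation). No gaps.
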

\begin{proof}
Differentiating the function
	\(
		f_v(t) = \norm{\me^{tZ}v}^2
	\)
	we find
	\begin{equation} \label{eq:diffEq}
		\frac{\md}{\md t} f_v(t) = \scalar{ \me^{tY} z}{(Y+Y^*)\me^{tY} z} \leq \omega_0 f_v(t)
	\end{equation}
and Gronwall's Lemma implies \eqref{eq:ineqBound}. On the other hand, if we choose $\omega < \omega_0$
and we consider the eigenvector $v$ of $Y+Y^*$ associated with the eigenvalue $\omega_0$, then
	\[
		\left. \frac{\md}{\md t} \me^{-t\omega}f_v(t) \right|_{t=0} = (\omega_0 - \omega) \norm{v}^2 > 0.
	\]
	In particular we have $\me^{-t\omega}f_v(t) > \norm{v}^2$ in a right neighbourhood of $t=0$ which means $\omega_0$ is the optimal bound in inequality \eqref{eq:ineqBound}.
\end{proof}

\medskip

Let $\tilde{\mathbf{S}}= \mathbf{S}+\mi \mathbf{J}$.
Proposition \ref{prop:spectralGapLinearCombinations} with  $n=1$ and $x=W(z)$ yields
\begin{equation}\label{det-g}
K_{1,t}(z,z)=\me^{-2gt} s_0(z,z) - s_t(z, z)=\me^{-2gt}\norm{\tilde{\mathbf{S}}^{1/2}\begin{bmatrix}
	\Re z \\ \Im z
	\end{bmatrix}}^2-\norm{\me^{t \tilde{\mathbf{S}}^{1/2}\mathbf{Z}\tilde{\mathbf{S}}^{-1/2}}\tilde{\mathbf{S}}^{1/2}\begin{bmatrix}
	\Re z \\ \Im z
	\end{bmatrix}}^2
\end{equation}
for all $t\geq0$, and we get a natural candidate for $2g$ from  Lemma \ref{lem:ineqBound} applied to $Y=\tilde{\mathbf{S}}^{1/2}\mathbf{Z}\tilde{\mathbf{S}}^{-1/2}$ as a $2d \times 2d$ matrix.

\begin{proposition} \label{prop:omega0Properties}
Suppose that $Z$ is stable and $\CZ>0$. Let $\omega_0$ be the greatest eigenvalue of the $2d\times 2d$
 matrix $\tilde{\mathbf{S}}^\frac{1}{2}\mathbf{Z} \tilde{\mathbf{S}}^{-\frac{1}{2}}
 + \tilde{\mathbf{S}}^{-\frac{1}{2}}\mathbf{Z}^* \tilde{\mathbf{S}}^{\frac{1}{2}}$, then
$-\omega_0$ is the smallest eigenvalue of $\mathbf{C_{\mathbf{Z}}\tilde{S}}^{-1}$ and
$\omega_0 <0$.
\end{proposition}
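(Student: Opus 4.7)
The plan is to recognise the matrix in the statement as $(-1)$ times a congruence of $\CZ$ by $\tilde{\mathbf{S}}^{-1/2}$, and then deduce the two assertions from elementary spectral considerations.

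First I would upgrade the real-linear Lyapunov equation $Z^\sharp S + SZ=-C$ from \eqref{eq:invStateEquation} to the complexified identity
\begin{equation*}
\mathbf{Z}^*\tilde{\mathbf{S}} + \tilde{\mathbf{S}}\mathbf{Z} = -\CZ.
\end{equation*}
This can be obtained either by differentiating (at $s=0$) the integral representation $\tilde{\mathbf{S}}=\int_0^\infty \me^{s\mathbf{Z}^*}\CZ\, \me^{s\mathbf{Z}}\, \md s$ derived in the proof of Theorem \ref{thm:invariantState}, or directly by adding $\mathbf{Z}^*\mathbf{S}+\mathbf{S}\mathbf{Z}=-\mathbf{C}$ to $\mi(\mathbf{Z}^*\mathbf{J}+\mathbf{J}\mathbf{Z})=\mathbf{C}-\CZ$, the latter being just a rearrangement of the definition of $\CZ$. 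Since $\CZ>0$ by hypothesis, the proof of Theorem \ref{thm:invariantState} guarantees $\tilde{\mathbf{S}}>0$, so the Hermitian positive square root $\tilde{\mathbf{S}}^{1/2}$ is well-defined and invertible.

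Multiplying the Lyapunov equation on the left and right by $\tilde{\mathbf{S}}^{-1/2}$ then yields
\begin{equation*}
\tilde{\mathbf{S}}^{1/2}\mathbf{Z}\tilde{\mathbf{S}}^{-1/2} + \tilde{\mathbf{S}}^{-1/2}\mathbf{Z}^*\tilde{\mathbf{S}}^{1/2} = -\tilde{\mathbf{S}}^{-1/2}\CZ\,\tilde{\mathbf{S}}^{-1/2},
\end{equation*}
so $\omega_0$ is the largest eigenvalue of the Hermitian matrix on the right, equivalently $-\omega_0$ is the smallest eigenvalue of $\tilde{\mathbf{S}}^{-1/2}\CZ\,\tilde{\mathbf{S}}^{-1/2}$. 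The latter is similar to $\CZ\tilde{\mathbf{S}}^{-1}$ via conjugation by $\tilde{\mathbf{S}}^{1/2}$ (indeed $\tilde{\mathbf{S}}^{-1/2}(\CZ\tilde{\mathbf{S}}^{-1})\tilde{\mathbf{S}}^{1/2}=\tilde{\mathbf{S}}^{-1/2}\CZ\,\tilde{\mathbf{S}}^{-1/2}$), so the two matrices share the same spectrum, which gives the first claim and in particular shows that $\CZ\tilde{\mathbf{S}}^{-1}$ has real spectrum.

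For the strict negativity of $\omega_0$ I would observe that $\tilde{\mathbf{S}}^{-1/2}\CZ\,\tilde{\mathbf{S}}^{-1/2}$ is a congruence of the strictly positive matrix $\CZ$ by the invertible Hermitian matrix $\tilde{\mathbf{S}}^{-1/2}$, hence is strictly positive definite, so its smallest eigenvalue $-\omega_0$ is strictly positive. The only non-routine step is the complexified Lyapunov identity, which requires carefully translating between real-linear operators on $\mathbb{C}^d$ and their complexifications on $\mathbb{C}^{2d}$ via the formalism of Remark \ref{rem:complexify}; everything after that is linear algebra on Hermitian $2d\times 2d$ matrices.
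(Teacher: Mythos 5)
Your proposal is correct and follows essentially the same route as the paper: both derive the complexified Lyapunov identity $\mathbf{Z}^*\tilde{\mathbf{S}}+\tilde{\mathbf{S}}\mathbf{Z}=-\CZ$, conjugate by $\tilde{\mathbf{S}}^{-1/2}$ to identify $\omega_0$ with minus the smallest eigenvalue of $\tilde{\mathbf{S}}^{-1/2}\CZ\tilde{\mathbf{S}}^{-1/2}$, and conclude by similarity to $\CZ\tilde{\mathbf{S}}^{-1}$ and strict positivity of the congruence. Your extra remarks on why $\tilde{\mathbf{S}}>0$ and on the two ways to obtain the Lyapunov identity are sound but only make explicit what the paper leaves implicit.
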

\begin{proof}
The condition on the covariance operator $\mathbf{S}$ of an invariant state $\mathbf{Z}^* \mathbf{S} + \mathbf{SZ} = -\mathbf{C}$
immediately yields
\[
\mathbf{Z}^*\tilde{\mathbf{S}}+\tilde{\mathbf{S}}\mathbf{Z}=-\mathbf{C}+\mi(\mathbf{Z}^*\mathbf{J}+\mathbf{J}\mathbf{Z})=
-\mathbf{C}_{\mathbf{Z}}.
\]
Left and right multiplying by $\tilde{\mathbf{S}}^{-\frac{1}{2}}$ we get
\begin{equation}\label{itm:dissipativeGenerator}
\tilde{\mathbf{S}}^\frac{1}{2}\mathbf{Z} \tilde{\mathbf{S}}^{-\frac{1}{2}} + \tilde{\mathbf{S}}^{-\frac{1}{2}}\mathbf{Z}^* \tilde{\mathbf{S}}^{\frac{1}{2}} = - \tilde{\mathbf{S}}^{-\frac{1}{2}}\mathbf{C}_{\mathbf{Z}}\tilde{\mathbf{S}}^{-\frac{1}{2}}.
\end{equation}
Since $\mathbf{C}_{\mathbf{Z}}>0$ by our assumptions, we get $\omega_0 < 0$. On the other hand,  $\tilde{\mathbf{S}}^{-\frac{1}{2}}\mathbf{C}_{\mathbf{Z}}\tilde{\mathbf{S}}^{-\frac{1}{2}}$ is similar to $\mathbf{C}_{\mathbf{Z}}\tilde{\mathbf{S}}^{-1}$ and then $-\omega_0$ is the smallest eigenvalue
of $\mathbf{C}_{\mathbf{Z}}\tilde{\mathbf{S}}^{-1}$.
	\end{proof}
\medskip

Our goal now is to prove that $g=-\omega_0/2$ is actually the spectral gap.
We begin by showing that equation \eqref{def-g} holds for $\tilde{x}$ with $x\in \mathcal{W}$, i.e.
that $K_{n,t}$ is a definite positive kernel.
As a preliminary result we prove the following
\begin{lemma} \label{lem:kernelPosDef}
Let $n \geq 1$ and $t \geq 0$. The kernel
\begin{equation} \label{eq:kernelRoot}
	K^\prime_{n,t} (z,w ) = \me^{t\omega_0/n}s_0(z,w) - s_t(z,w), \quad z,w \in \mathbb{C}^d
\end{equation}
is positive definite.
\end{lemma}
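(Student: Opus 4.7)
My plan is to reduce the positive-definiteness of $K^\prime_{n,t}$ to Lemma \ref{lem:ineqBound} applied to the matrix $Y := \tilde{\mathbf{S}}^{1/2}\mathbf{Z}\tilde{\mathbf{S}}^{-1/2}$, exploiting the fact that $\omega_0<0$ by Proposition \ref{prop:omega0Properties}. The key preparatory remark is that $\tilde{\mathbf{S}} = \mathbf{S}+\mi\mathbf{J}$ is strictly positive (since $\CZ>0$ forces the invariant state to be faithful, cf.\ Theorem \ref{thm:invariantState}), so $\tilde{\mathbf{S}}^{1/2}$ is well defined, invertible, and self-adjoint on $\mathbb{C}^{2d}$.

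Fix $l\geq 1$, vectors $z_1,\dots,z_l\in\mathbb{C}^d$ and scalars $\alpha_1,\dots,\alpha_l\in\mathbb{C}$; set $v_j = \bigl[\begin{smallmatrix}\Re z_j \\ \Im z_j\end{smallmatrix}\bigr]$ and $w_j = \tilde{\mathbf{S}}^{1/2}v_j$. Using the similarity $\tilde{\mathbf{S}}^{1/2}\me^{t\mathbf{Z}} = \me^{tY}\tilde{\mathbf{S}}^{1/2}$, I can rewrite
\[
s_t(z_j,z_k) = \scalar{\me^{t\mathbf{Z}}v_j}{\tilde{\mathbf{S}}\me^{t\mathbf{Z}}v_k} = \scalar{\me^{tY}w_j}{\me^{tY}w_k},
\qquad s_0(z_j,z_k)=\scalar{w_j}{w_k}.
\]
Setting $V := \sum_{j=1}^l \alpha_j w_j\in\mathbb{C}^{2d}$, the double sum against $K^\prime_{n,t}$ collapses to
\[
\sum_{j,k=1}^l \conj{\alpha_j}\alpha_k K^\prime_{n,t}(z_j,z_k) = \me^{t\omega_0/n}\norm{V}^2 - \norm{\me^{tY}V}^2.
\]

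Now I invoke Lemma \ref{lem:ineqBound}: because $\omega_0$ is the largest eigenvalue of $Y+Y^*$, we have $\norm{\me^{tY}V}^2\leq \me^{t\omega_0}\norm{V}^2$ for every $t\geq 0$. Proposition \ref{prop:omega0Properties} gives $\omega_0<0$, so for every $n\geq 1$ and $t\geq 0$,
\[
\me^{t\omega_0} \leq \me^{t\omega_0/n},
\]
which combined with the previous estimate yields $\norm{\me^{tY}V}^2 \leq \me^{t\omega_0/n}\norm{V}^2$. Thus the double sum is non-negative, proving that $K^\prime_{n,t}$ is positive definite.

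The proof is essentially a one-line similarity trick followed by the monotonicity $\me^{t\omega_0}\leq\me^{t\omega_0/n}$; the ``hard part,'' if any, is simply recognizing that the condition $\CZ>0$ plus stability of $Z$ is exactly what delivers both the strict positivity of $\tilde{\mathbf{S}}$ (needed to form $\tilde{\mathbf{S}}^{1/2}$) and the strict negativity of $\omega_0$ (needed to spread the bound from $n=1$ to all $n\geq 1$). Note that the case $n=1$ of Lemma \ref{lem:kernelPosDef} is exactly Lemma \ref{lem:ineqBound}; the larger $n$ only loosen the exponential rate, so no additional work is required.
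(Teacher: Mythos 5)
Your proof is correct and follows essentially the same route as the paper's: both arguments collapse the double sum by sesquilinearity (through the vectors $[\Re z_j,\Im z_j]^T\in\mathbb{C}^{2d}$) and then invoke Lemma \ref{lem:ineqBound} together with $\omega_0<0$ to pass from the rate $\omega_0$ to $\omega_0/n$; the paper merely packages the latter step as the decomposition $K^\prime_{n,t}=K^\prime_{1,t}+(\me^{t\omega_0/n}-\me^{t\omega_0})s_0$ before reducing to $n=1$. If anything, your version is slightly cleaner in making explicit that the "sesquilinear collapse" happens in $\mathbb{C}^{2d}$ via $V=\sum_j\alpha_j\tilde{\mathbf{S}}^{1/2}v_j$ rather than through a complex combination of the $z_j\in\mathbb{C}^d$.
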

\begin{proof}
Note that we just need to prove the proposition for $n = 1$ because we have
\[
	\sum_{j,k} \conj{c_j}c_k K^\prime_{n,t}(z_j,z_k) = \sum_{j,k} \conj{c_j}c_k\left( K^\prime_{1,t}(z_j,z_k) + \left( \me^{-2tg/n} - \me^{-2tg} \right) s_0(z_j,z_k)\right),
\]
($c_j \in \mathbb{C}, z_j \in \mathbb{C}^d$) which is positive if $K^\prime_{1,t}$ is positive definite,
since $s_0$ is positive definite and $\me^{-2tg/n} - \me^{-2tg} >0$.
Therefore, fix $n=1$ and let us prove positive definiteness of \eqref{eq:kernelRoot}. By sesquilinearity of $s_t$ we have
\[
	\sum_{j,k} \conj{c_j}c_k K^\prime_{1,t}(z_j,z_k) = \me^{-2tg}s_0\left( \xi, \xi\right) - s_t(\xi, \xi),
\]
where $\xi = \sum_j c_j z_j$. Since this quantity is positive by our choice of $2g$ and
Lemma \ref{lem:ineqBound},
$K^\prime_{1,t}$ is positive definite and the proof is complete.
\end{proof}

\smallskip

We are now in a position to prove the key inequality
\begin{proposition} \label{prop:spectralGapUpperBound}
	For all $x \in \mathcal{W}$ and $t\geq 0$ we have
	\[
		\norm{T_t(\tilde{x})}_2^2 \leq \me^{-2gt} \norm{\tilde{x}}_2^2\, .
	\]
\end{proposition}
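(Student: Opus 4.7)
The plan is to reduce the inequality to a statement about positive definite kernels via Proposition \ref{prop:spectralGapLinearCombinations}, and then deduce the required positive definiteness of $K_{n,t}$ from the already established positive definiteness of the kernels $K'_{n,t}$ in Lemma \ref{lem:kernelPosDef}.

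First I would invoke Proposition \ref{prop:spectralGapLinearCombinations}: with $2g=-\omega_0$, so that $\me^{-2gt}=\me^{t\omega_0}$, the inequality to be proven becomes
\[
\sum_{n\geq 1}\frac{1}{n!}\sum_{j,k=1}^l \overline{\xi_j}\xi_k K_{n,t}(z_j,z_k)\geq 0,
\]
and it is sufficient to establish that each kernel $K_{n,t}(z,w)=\me^{t\omega_0}s_0(z,w)^n-s_t(z,w)^n$ is positive definite on $\mathbb{C}^d$.

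Next I would exploit the classical factorization $a^n-b^n=(a-b)\sum_{k=0}^{n-1}a^k b^{n-1-k}$ applied pointwise with $a(z,w)=\me^{t\omega_0/n}s_0(z,w)$ and $b(z,w)=s_t(z,w)$. This gives
\[
K_{n,t}(z,w)=K'_{n,t}(z,w)\cdot \sum_{k=0}^{n-1}\bigl(\me^{t\omega_0/n}s_0(z,w)\bigr)^{k}\bigl(s_t(z,w)\bigr)^{n-1-k}.
\]
Here $K'_{n,t}$ is positive definite by Lemma \ref{lem:kernelPosDef}. The kernel $s_0(z,w)=\langle[\Re z;\Im z],\tilde{\mathbf{S}}[\Re w;\Im w]\rangle$ is positive definite because $\tilde{\mathbf{S}}=\mathbf{S}+\mi\mathbf{J}>0$ (faithfulness of $\rho$, guaranteed by $\mathbf{C_Z}>0$ via Theorem \ref{thm:invariantState}), and $s_t(z,w)$ inherits positive definiteness by composition with $\me^{t\mathbf{Z}}$. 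By the Schur product theorem, each product $a^k b^{n-1-k}$ is positive definite, hence so is their sum, and the Schur product with $K'_{n,t}$ yields positive definiteness of $K_{n,t}$.

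The main obstacle I anticipate is the bridge between the single-exponent kernel $K'_{n,t}$ (which is what Lemma \ref{lem:kernelPosDef} hands us) and the $n$-th power kernel $K_{n,t}$ actually required by Proposition \ref{prop:spectralGapLinearCombinations}; the factorization above together with the Schur product theorem resolves it cleanly, since positive definiteness is preserved under both pointwise multiplication and addition. Once this is in place, summing the nonnegative contributions over $n$ and $j,k$ concludes the proof.
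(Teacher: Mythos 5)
Your proposal is correct and follows essentially the same route as the paper: reduce to positive definiteness of $K_{n,t}$ via Proposition \ref{prop:spectralGapLinearCombinations}, factor $K_{n,t}=K'_{n,t}\cdot\sum_{k=0}^{n-1}\bigl(\me^{t\omega_0/n}s_0\bigr)^{k}s_t^{\,n-1-k}$, and conclude by the Schur product theorem (the paper cites Parthasarathy, Corollary 15.2, for the closure of positive definite kernels under pointwise products and positive linear combinations). No gaps.
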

\begin{proof}
Thanks to Proposition \ref{prop:spectralGapLinearCombinations} it suffices to show that the kernel
\[
	K_{n,t} (z,w ) = \me^{t\omega_0}\left(s_0(z,w)\right)^n - \left(s_t(z,w)\right)^n, \quad z,w \in \mathbb{C}^d,
\]
is positive definite.
Recalling the kernel defined in \eqref{eq:kernelRoot}, we have
\begin{align*}
	K_{n,t} (z,w) = K^\prime_{n,t}(z,w) \sum_{m=0}^{n-1}\me^{\frac{n-1-m}{n}\,t\omega_0}\left(s_0(z,w)\right)^{n-1-m}\left(s_t(z,w)\right)^{m}.
\end{align*}
Since both $s_t$ and $s_0$ are positive definite kernels, their powers and their linear combinations with positive coefficients are still positive definite by \cite{Partha} Corollary 15.2, and so is $K_{n,t}$.
\end{proof}
\smallskip

\smallskip
Finally we prove that the bound for the spectral gap given by Proposition \ref{prop:spectralGapUpperBound} is actually optimal,
even if we just consider Hilbert-Schmidt operators $\tilde{x}$ with $x \in \mathcal{W}$.

\begin{proposition} \label{prop:spectralGapSharp}
	Let $\omega < \omega_0$. There exist $\delta > 0$ and ${x} \in\mathcal{W}$ such that
	\[
		\norm{T_t(\tilde{x})}_2^2 > \me^{t\omega} \norm{\tilde{x}}_2^2\ \quad \forall\, t \in (0, \delta).
	\]
\end{proposition}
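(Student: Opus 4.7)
My plan is to construct a suitably scaled linear combination of Weyl operators whose spectral-gap ratio $\phi(t) := \|T_t(\widetilde{x})\|_2^2/\|\widetilde{x}\|_2^2$ has derivative $\omega_0$ at $t=0$, and then invoke smoothness of $\phi$ to deduce the desired strict dominance over $e^{t\omega}$ on a right neighbourhood of zero. The point is that, since $\omega<\omega_0$, the function $t\mapsto \phi(t)-e^{t\omega}$ will vanish at $t=0$ with strictly positive derivative, and Taylor's theorem will do the rest.

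Concretely, I would first pick $v\in\mathbb{C}^{2d}$ an eigenvector of the Hermitian matrix $\tilde{\mathbf{S}}^{-1/2}\CZ\tilde{\mathbf{S}}^{-1/2}$ for its smallest eigenvalue $-\omega_0$ (whose existence is guaranteed by Proposition \ref{prop:omega0Properties}). Then I would set $\eta:=\tilde{\mathbf{S}}^{-1/2}v\in\mathbb{C}^{2d}$ and decompose $\eta=\sum_{j=1}^{2d}\eta_j\vec{z}_j$ against any real basis $\{\vec z_j\}$ of $\mathbb{R}^{2d}$, with complex coefficients $\eta_j$; each $\vec z_j=\bigl[\begin{smallmatrix}\Re z_j\\ \Im z_j\end{smallmatrix}\bigr]$ identifies a unique $z_j\in\mathbb{C}^d$. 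For small $\epsilon>0$ I would take $x_\epsilon:=\sum_{j=1}^{2d}\eta_j W(\epsilon z_j)\in\mathcal{W}$ and compute $\phi_\epsilon(t)$ using Lemma \ref{lem:normSpectralGap}.

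The key computation is that of $\phi_\epsilon'(0)$. Differentiating $s_t(z_j,z_k)=\langle e^{t\mathbf{Z}}\vec z_j,\tilde{\mathbf{S}}\,e^{t\mathbf{Z}}\vec z_k\rangle$ at $t=0$ and using $\mathbf{Z}^*\tilde{\mathbf{S}}+\tilde{\mathbf{S}}\mathbf{Z}=-\CZ$ (established in the proof of Proposition \ref{prop:omega0Properties}), one obtains $\dot s_0(z_j,z_k)=-\langle\vec z_j,\CZ\vec z_k\rangle$. Straightforward bookkeeping then reveals that both numerator and denominator of $\phi_\epsilon(t)=\|T_t(\widetilde{x}_\epsilon)\|_2^2/\|\widetilde{x}_\epsilon\|_2^2$ have leading order $\epsilon^2$, yielding
\[
\lim_{\epsilon\to 0}\phi_\epsilon'(0)\;=\;-\frac{\langle\eta,\CZ\,\eta\rangle}{\langle\eta,\tilde{\mathbf{S}}\,\eta\rangle}\;=\;-\frac{-\omega_0\|v\|^2}{\|v\|^2}\;=\;\omega_0,
\]
where the second equality uses the eigenvector property of $v$. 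For any $\epsilon$ small enough we therefore have $\phi_\epsilon'(0)>\omega$, and since $\phi_\epsilon(0)=1=e^{0\cdot\omega}$ with $\phi_\epsilon$ smooth (indeed analytic) in $t$, Taylor's theorem produces $\delta>0$ with $\phi_\epsilon(t)>e^{t\omega}$ for all $t\in(0,\delta)$; this gives the required inequality with $x=x_\epsilon$.

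The main conceptual obstacle is precisely what drives this choice of $x_\epsilon$: a naive attempt with a single Weyl operator $x=W(z)$ only exposes the Rayleigh quotient $\langle\vec z,\CZ\vec z\rangle/\langle\vec z,\tilde{\mathbf{S}}\vec z\rangle$ on the real subspace $\mathbb{R}^{2d}\subset\mathbb{C}^{2d}$, whose infimum may be strictly larger than $-\omega_0$, so that the derivative at $t=0$ may not exceed $\omega$. Allowing complex coefficients $\eta_j$ of real basis vectors $\vec z_j$ supplies exactly the missing degrees of freedom to realise an arbitrary $\eta\in\mathbb{C}^{2d}$, and hence to hit the true eigenvector of $\tilde{\mathbf{S}}^{-1/2}\CZ\tilde{\mathbf{S}}^{-1/2}$ associated to $-\omega_0$. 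A secondary technical check is that $\|\widetilde{x}_\epsilon\|_2^2>0$ for small $\epsilon$, which holds since its leading $\epsilon^2$ coefficient equals $\|v\|^2>0$, so $\phi_\epsilon$ is genuinely well defined.
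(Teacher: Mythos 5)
Your proposal is correct and follows essentially the same route as the paper: both arguments realise the complex eigenvector $\tilde{\mathbf{S}}^{-1/2}v$ of $-\tilde{\mathbf{S}}^{-1/2}\CZ\tilde{\mathbf{S}}^{-1/2}$ as a complex-coefficient combination of real vectors (the paper uses the two-term split into real and imaginary parts with coefficients $1,\mi$, you use a general real basis of $\mathbb{R}^{2d}$), scale the Weyl arguments by a small parameter so that the leading order isolates the sesquilinear form $s_t$, and conclude from the strict positivity of the $t$-derivative of $\|T_t(\tilde x)\|_2^2-\me^{t\omega}\|\tilde x\|_2^2$ at $t=0$. The differences (ratio versus difference, $2d$-term versus $2$-term decomposition) are cosmetic.
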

\begin{proof}
	Let $z \in \mathbb{C}^{2d}$ such that $\tilde{\mathbf{S}}^\frac{1}{2}z$ is an eigenvector of
$\tilde{\mathbf{S}}^\frac{1}{2}\mathbf{Z} \tilde{\mathbf{S}}^{-\frac{1}{2}}
+ \tilde{\mathbf{S}}^{-\frac{1}{2}}\mathbf{Z}^* \tilde{\mathbf{S}}^{\frac{1}{2}} $ associated with the eigenvalue $\omega_0$.
Write $z=[z_p,z_q]^T$ with $z_p,z_q\in\mathbb{C}^d$ and consider  $z_1, z_2 \in \mathbb{C}^d$ defined by $z_1=\Re z_p +\mi \Re z_q$, $z_2=\Im z_p +\mi \Im z_q$,  so that $[\Re z_1, \Im z_1]^T = \Re z$ and $[\Re z_2, \Im z_2]^T= \Im z$.
For all $r\in\mathbb{R}$, considering $x_r = W(rz_1) + \mi W(rz_2)=\sum_{j=1}^2 c_j W(rz_j)$, with $c_1 = 1, c_2=\mi$, we have
	\[
		\widetilde{x_r} = \sum_{j=1}^2c_j\left( W(rz_j) - \me^{-\frac{r^2}{2}s_0(z_j,z_j)} \unit\right)\rho^{1/2}
	\]
	by equation \eqref{eq:combinationWeylTilde}.
	 Setting again $\xi_{j,r} = \me^{-\frac{r^2}{2} s_0(z_j,z_j)}c_j$, using equation \eqref{eq:normSpectralGap1}
and recalling the definition of $s_t(z,w)$ in Proposition \ref{prop:spectralGapLinearCombinations} we then have
\[
\left\Vert T_t(\tilde{x})\right\Vert_2^2
= \sum_{j,k=1}^l\overline{\xi}_{j,r}\,\xi_{k,r} \left(\mathrm{e}^{r^2 s_t(z_j,z_k)}-1 \right)
\]
Taking the derivative at $t=0$ we find
	\[
	\left.\frac{\md}{\md t} s_t(z_j,z_k)\right|_{t=0}=\scalar{\tilde{\mathbf{S}}^{\frac{1}{2}}\begin{bmatrix}\Re z_j\\ \Im z_j \end{bmatrix}}{\left(\tilde{\mathbf{S}}^{1/2}\mathbf{Z}\tilde{\mathbf{S}}^{-1/2}
+\tilde{\mathbf{S}}^{-1/2}\mathbf{Z}^*\tilde{\mathbf{S}}^{1/2}\right)\tilde{\mathbf{S}}^{\frac{1}{2}}
\begin{bmatrix}\Re z_k\\ \Im z_k \end{bmatrix} }
	\]
	Consider now
	\begin{align*}
		f(r) &:= \left.\frac{\md}{\md t} \norm{T_t(\tilde{x_r})}_2^2 - \me^{t\omega} \norm{\tilde{x_r}}_2^2 \right|_{t=0} \\
		&= \sum_{j,k=1}^2 \overline{\xi}_{j,r}\,\xi_{k,r}
\left(\left(r^2\left.\frac{\md}{\md t} s_t(z_j,z_k)\right|_{t=0} -\omega \right)\me^{r^2s_0(z_j,z_k)} + \omega \right)
	\end{align*}
	It holds $f(0) = f^\prime(0) = 0$ while
	\[
		f^{\prime\prime}(0) = 2\sum_{j,k=1}^2 \conj{c_j}c_k \left(\left.\frac{\md}{\md t} s_t(z_j,z_k)\right|_{t=0} -\omega s_0(z_j,z_k) \right)	 = 2 (\omega_0 - \omega) s_0(z,z)
	\]
	where the last equality follows from sesquilinearity of the derivative of $s_t$ and $s_0$ as well as the fact that $\tilde{\mathbf{S}}^\frac{1}{2}z$ is an eigenvector. In particular, recalling that $\omega<\omega_0<0$, we find
$f^{\prime\prime}(0)>0$ because the invariant state is faithful
and $z\neq 0$. This implies that there exists $\delta_0>0$ such that $f(r) >0$ for every $r \in (0, \delta_0)$.
This implies in turn that for every $r \in (0, \delta_0)$ there exists a $\delta > 0$ such that
	\[
		\norm{T_t(\tilde{x_r})}_2^2 > \me^{t\omega} \norm{\tilde{x_r}}_2^2, \quad \forall t \in (0, \delta).
	\]
\end{proof}

\smallskip

Proposition \ref{prop:spectralGapUpperBound} proves the inequality \eqref{def-g} for the spectral gap
for elements $\tilde{x}$ with $x \in \mathcal{W}$.
The following Theorem shows that it holds for all the remaining elements.

\begin{theorem} \label{thm:spectralGap}
	Let $\T$ be a gaussian QMS with $Z$ stable, $\CZ >0$ and $\zeta=0$. Then $\T$ has a spectral gap $g=-\omega_0/2 > 0$.
\end{theorem}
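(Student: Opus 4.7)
The work is essentially done modulo extending Proposition \ref{prop:spectralGapUpperBound} from the dense set $\{\widetilde{x}:x\in\mathcal{W}\}$ to all of $\mathfrak{I}_2(\mathsf{h})$ and combining with the sharpness result. My plan proceeds in four short steps.

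First, I would establish that $\mathcal{W}\rho^{1/2}:=\{x\rho^{1/2}:x\in\mathcal{W}\}$ is dense in $\mathfrak{I}_2(\mathsf{h})$. Suppose $y\in\mathfrak{I}_2(\mathsf{h})$ is orthogonal to $\mathcal{W}\rho^{1/2}$; then for every $z\in\mathbb{C}^d$,
\[
0=\langle y,W(z)\rho^{1/2}\rangle_2=\operatorname{tr}(y^*W(z)\rho^{1/2})=\operatorname{tr}(\rho^{1/2}y^*W(z)).
\]
Thus the trace-class operator $\rho^{1/2}y^*$ has vanishing quantum characteristic function, so $\rho^{1/2}y^*=0$. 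Since by Theorem \ref{thm:invariantState} the invariant state $\rho$ is faithful (the assumption $\CZ>0$ gives this), $\rho^{1/2}$ is injective with dense range, forcing $y=0$. Consequently $\{\widetilde{x}:x\in\mathcal{W}\}$ is dense in $\{\rho^{1/2}\}^{\perp}\subset\mathfrak{I}_2(\mathsf{h})$.

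Second, I would invoke the contractivity of the extension $(T_t)_{t\geq 0}$ on $\mathfrak{I}_2(\mathsf{h})$ (recalled in the introduction, cf.~\cite{CaFa}) to conclude that $T_t$ is bounded, hence continuous. Since $\rho^{1/2}$ is an invariant vector, $T_t$ preserves the orthogonal complement $\{\rho^{1/2}\}^\perp$. By Proposition \ref{prop:spectralGapUpperBound}, the inequality
\[
\|T_t(\widetilde{x})\|_2\leq \me^{-gt}\|\widetilde{x}\|_2
\]
holds on a dense subspace of $\{\rho^{1/2}\}^\perp$; by density and continuity it extends to every $y\in\{\rho^{1/2}\}^\perp$.

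Third, for a generic $x\in\mathfrak{I}_2(\mathsf{h})$ I would split
\[
x=\langle\rho^{1/2},x\rangle_2\,\rho^{1/2}+y,\qquad y:=x-\langle\rho^{1/2},x\rangle_2\,\rho^{1/2}\in\{\rho^{1/2}\}^\perp,
\]
and use invariance of $\rho^{1/2}$ to obtain $T_t(x)-\langle\rho^{1/2},x\rangle_2\rho^{1/2}=T_t(y)$, whence \eqref{def-g} holds with $g=-\omega_0/2$ (positive by Proposition \ref{prop:omega0Properties}).

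Finally, optimality of this constant is already contained in Proposition \ref{prop:spectralGapSharp}: any $\omega<\omega_0$ fails to satisfy the required bound even on the tilded Weyl elements, so $-\omega_0/2$ is indeed the largest $g$ in \eqref{def-g}. I do not anticipate a serious obstacle: the only subtle point is the density argument, but it follows cleanly from faithfulness of $\rho$ together with the standard fact that a trace-class operator is determined by its characteristic function, both of which are at hand.
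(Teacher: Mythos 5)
Your proposal is correct and follows essentially the same route as the paper: the density of $\{\widetilde{x}:x\in\mathcal{W}\}$ in $\{\rho^{1/2}\}^{\perp}$ is obtained exactly as in the paper's proof (vanishing characteristic function of the trace-class operator $\rho^{1/2}y^*$, then faithfulness of $\rho$), the extension to all of $\mathfrak{I}_2(\mathsf{h})$ uses contractivity/continuity of $T_t$, and optimality is delegated to Proposition \ref{prop:spectralGapSharp}. You merely make explicit the continuity and orthogonal-decomposition steps that the paper leaves implicit.
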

\begin{proof}
By Proposition \ref{prop:spectralGapUpperBound} we have $\norm{T_t(\tilde{x})}_2^2 \leq \me^{-2gt} \norm{\tilde{x}}_2^2$
for all  $t \geq 0$ and all $\tilde{x}\in\mathfrak{I}_2(\mathsf{h})$ written in the form \eqref{eq:combinationWeylTilde}.
To show that it also holds for any element of $\mathfrak{I}_2(\mathsf{h})$ orthogonal to $\rho^{1/2}$ we show that the set of those $\tilde{x}$ is dense in the orthogonal subspace of
$\rho^{1/2}$. To this end, consider $y\in \mathfrak{I}_2(\mathsf{h})$ orthogonal to $\rho^{1/2}$ such that
$\operatorname{tr}\left(y^*\tilde{x}\right)=0$ for all $\tilde{x}= x\rho^{1/2}-\scalar{\rho^{1/2}}{x\rho^{1/2}}_2\rho^{1/2}$ with $x\in\mathcal{W}$. In particular, taking $x=W(z)$ we have
\[
0=\operatorname{tr}\left(y^*\tilde{x}\right)
=\operatorname{tr}\left(y^*\left(W(z)-\operatorname{tr}\left(\rho\,W(z)\right)\unit\right)\rho^{1/2}\right)=\operatorname{tr}\left(\rho^{1/2}y^*W(z)\right)
\]
for all $z\in\mathbb{C}^d$. Since $\rho^{1/2}y^*$ is a trace class operator, the weak$^*$ density of Weyl operators in $\mathcal{B}(\mathsf{h})$
implies $\rho^{1/2}y^*=0$, namely $y=0$ because $\rho$ is faithful.

Proposition \ref{prop:spectralGapSharp} now concludes the proof, showing that $g$ is the optimal choice.
\end{proof}

\section{Conditions on the invariant state} \label{sec:invariantStates}

In this section we want to discuss the assumptions we made on $Z$ and $\CZ$ to show that they are not restrictive when looking for a positive spectral gap. We have indeed the following result.

\begin{theorem} \label{thm:assumptionsJustification}
	Let $\mathcal{T}$ be a gaussian QMS and suppose it has a spectral gap $g$. Then $Z$ is stable and $\CZ > 0$.
\end{theorem}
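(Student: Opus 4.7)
I will apply the spectral gap inequality \eqref{def-g} to Weyl-type test operators in two complementary regimes: a long-time argument will force $Z$ to be stable, and a short-time argument will force $\CZ$ to be strictly positive. The very definition \eqref{def-g} requires $\rho$ to be a faithful invariant state so that $i_2$ has dense range; combined with the uniqueness of the invariant state implied by the spectral gap, and with the preservation of the gaussian class under the dynamics (Proposition \ref{prop:gaussianStateEvolution}), this allows me to identify $\rho$ with the gaussian state $\rho_{(\mu,S)}$ of Theorem \ref{thm:invariantState}. In particular $\tilde{\mathbf S} := \mathbf S + \mi \mathbf J > 0$ by the faithfulness criterion used in that proof.

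\textbf{Stability of $Z$.} I will test \eqref{def-g} on the single Weyl observable $x = W(z)$, for which Lemma \ref{lem:normSpectralGap} reduces the inequality to
\[
\me^{s_t(z,z)} - 1 \leq \me^{-2gt}\bigl(\me^{s_0(z,z)} - 1\bigr).
\]
Letting $t \to \infty$ forces $s_t(z,z) = \bigl\Vert \tilde{\mathbf S}^{1/2} \me^{t\mathbf Z} \hat z \bigr\Vert^2 \to 0$ for every $\hat z = (\Re z, \Im z) \in \mathbb{R}^{2d}$. Since $\tilde{\mathbf S}^{1/2}$ is invertible this means $\me^{t\mathbf Z}v \to 0$ for every $v$, which is stability of $\mathbf Z$ and hence of $Z$.

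\textbf{Strict positivity of $\CZ$.} With $Z$ stable, the Lyapunov identity $\mathbf Z^* \tilde{\mathbf S} + \tilde{\mathbf S}\mathbf Z = -\CZ$ used inside the proof of Theorem \ref{thm:invariantState} holds. Conjugating by $\tilde{\mathbf S}^{-1/2}$ and setting $Y = \tilde{\mathbf S}^{1/2}\mathbf Z\tilde{\mathbf S}^{-1/2}$ gives
\[
Y + Y^* = -\tilde{\mathbf S}^{-1/2}\CZ\tilde{\mathbf S}^{-1/2}.
\]
Since $\CZ \geq 0$ by Proposition \ref{prop:CandZ}, the maximum eigenvalue $\omega_0$ of $Y + Y^*$ satisfies $\omega_0 \leq 0$, with $\omega_0 = 0$ precisely when $\CZ$ has a nontrivial kernel. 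I will suppose $\omega_0 = 0$ and derive a contradiction: the infinitesimal construction of Proposition \ref{prop:spectralGapSharp} depends only on $\tilde{\mathbf S} > 0$ and on the existence of an eigenvector of $Y+Y^*$ at the top eigenvalue, never on $\CZ > 0$. Applied with $\omega = -2g$, which satisfies $\omega < 0 = \omega_0$ because $g > 0$, it produces $\tilde x$ and $\delta > 0$ for which $\Vert T_t(\tilde x)\Vert_2^2 > \me^{-2gt}\Vert \tilde x\Vert_2^2$ on $(0,\delta)$, contradicting \eqref{def-g}. Hence $\omega_0 < 0$ and $\CZ > 0$.

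\textbf{Main obstacle.} The delicate point is identifying $\rho$ with a gaussian state before invoking Lemma \ref{lem:normSpectralGap}, whose closed form relies on the gaussian characteristic function. I expect to settle this by exploiting the exponential convergence $\mathcal{T}_{*t}(\sigma) \to \rho$ enforced by the spectral gap on a dense class of initial states, testing against gaussian $\sigma$ whose trajectory stays gaussian by Proposition \ref{prop:gaussianStateEvolution}, and invoking closedness of the gaussian family under the trace norm. Apart from this, the argument is a clean long-time/short-time reuse of the machinery already developed for Theorem \ref{thm:spectralGap}.
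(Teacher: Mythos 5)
Your plan for the second half is sound, but the first half has a genuine ordering problem. To prove stability you invoke Lemma \ref{lem:normSpectralGap}, but that lemma lives in Section \ref{sec:spectralGap} under the standing assumptions ``$Z$ stable and $\CZ>0$'', and its proof explicitly uses the representation $S=\int_0^\infty \me^{sZ^\sharp}C\me^{sZ}\,\md s$, whose convergence is exactly the stability you are trying to establish. So as written the argument is circular. It can in principle be repaired --- the identity $\Re\scalar{\me^{tZ}w}{S\me^{tZ}w}=\Re\scalar{w}{Sw}-\int_0^t\Re\scalar{\me^{sZ}w}{C\me^{sZ}w}\md s$ follows from invariance of $\rho_{(\mu,S)}$ alone via Proposition \ref{prop:gaussianStateEvolution}, without stability --- but then everything hinges on first knowing that the faithful invariant density $\rho$ defining the embedding \emph{is} a gaussian state. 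You flag this as the ``main obstacle'' and only sketch it (weak$^*$ convergence of $\mathcal{T}_{*t}(\sigma)$ for gaussian $\sigma$, closedness of the gaussian family in trace norm); that step is not trivial (one must extract convergence of $\mu_t, S_t$ from pointwise convergence of characteristic functions and verify the limit covariance is admissible) and it is load-bearing for your entire Part 1. The paper avoids all of this: its stability proof (Lemma \ref{lem:stabilityZ} plus Proposition \ref{prop:gapImpliesZstable}) works with an \emph{arbitrary} invariant state, using only the explicit Weyl action \eqref{eq:explWeyl}, the prefactor $c_t(z)$, and the continuity of $\widehat{\rho}$ at $0$ with $\widehat{\rho}(0)=1$; no gaussianity and no formula for $S$ are needed. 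You should either adopt that route for stability or fully execute the identification of $\rho$ with $\rho_{(\mu,S)}$ \emph{before} stability is known, which is the harder path.

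Your Part 2, by contrast, is a correct and genuinely different packaging of the paper's Appendix B. You observe that $\ker\CZ\neq\{0\}$ is equivalent to $\omega_0=0$ via the Lyapunov identity \eqref{itm:dissipativeGenerator}, and that the sharpness construction of Proposition \ref{prop:spectralGapSharp} only needs $\tilde{\mathbf{S}}>0$ and an eigenvector at the top eigenvalue of $Y+Y^*$, never $\CZ>0$ itself; applying it with $\omega=-2g<0=\omega_0$ gives the contradiction. This is essentially the same short-time, second-order-in-$r$ computation that the paper performs in Appendix B with the difference quotients $R_z(r)$ (there the kernel vector of $\CZ$ plays the role of your top eigenvector), but reusing Proposition \ref{prop:spectralGapSharp} is cleaner and avoids the limiting procedure with unbounded field operators. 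Note, however, that this part too presupposes that $\rho$ is the faithful gaussian invariant state (as does the paper's Lemma \ref{lem:charFunctionPolynomials}); once $Z$ is stable this follows from Theorem \ref{thm:invariantState} together with the uniqueness of the invariant state forced by the gap, so the dependence is harmless \emph{provided} Part 1 is fixed first.
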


We will split the proof in two steps. We start by showing the stability of $Z$, which is due to the requirement
of the existence of an invariant state and the asymptotic stability-like requirement in \eqref{def-g}.

\begin{lemma} \label{lem:stabilityZ}
	If $Z$ is not stable then there exists $z \in \mathbb{C}^d$ that satisfies one of the following:
	\begin{enumerate}
		\item $\me^{tZ}z \in \ker C$ for every $t \geq 0$;
		\item $\lim_{t \to \infty} \int_0^t \Re\scalar{\me^{sZ}z}{C\me^{sZ}z} \md s = +\infty.$
	\end{enumerate}
\end{lemma}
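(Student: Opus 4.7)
The plan is to transfer the problem to $\mathbb{R}^{2d}$ via the isomorphism of Remark~\ref{rem:complexify}, to exploit an eigenvector of $\mathbf{Z}$ associated with a non-stable eigenvalue, and then to split into two cases according to whether the resulting orbit is annihilated by $\sqrt{\mathbf{C}}$. First, for any $z \in \mathbb{C}^d$ with associated real vector $w := [\Re z, \Im z]^T \in \mathbb{R}^{2d}$, equation~(\ref{scal-isom-2d}) together with the factorization $\mathbf{C} = \sqrt{\mathbf{C}}^*\sqrt{\mathbf{C}}$ gives
\[
\Re\scalar{\me^{sZ}z}{C\me^{sZ}z} = \norm{\sqrt{\mathbf{C}}\,\me^{s\mathbf{Z}}w}^2,
\]
while the condition $\me^{sZ}z \in \ker C$ translates into $\sqrt{\mathbf{C}}\,\me^{s\mathbf{Z}}w = 0$. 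Hence it suffices to produce a nonzero $w \in \mathbb{R}^{2d}$ for which the integrand either vanishes identically on $[0,\infty)$ (case~1) or has divergent integral (case~2).

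Next, since $Z$ is not stable, neither is $\mathbf{Z}$, so $\mathbf{Z}$ admits an eigenvalue $\lambda \in \mathbb{C}$ with $\Re\lambda \ge 0$; let $v \in \mathbb{C}^{2d}$ be a corresponding eigenvector. Because $\mathbf{Z}$ is a real matrix by (\ref{eq:mathbfZ}), I distinguish two subcases. If $\lambda \in \mathbb{R}$, I take $v$ real and set $w := v$, so that $\me^{s\mathbf{Z}}w = \me^{s\lambda}v$. If $\lambda \notin \mathbb{R}$, after replacing $v$ by $\mi v$ if necessary so that $\Re v \neq 0$, I set $w := \Re v$ and use the fact that $\me^{s\mathbf{Z}}$ commutes with complex conjugation to compute
\[
\me^{s\mathbf{Z}}w = \me^{s\Re\lambda}\bigl(\cos(s\Im\lambda)\,\Re v - \sin(s\Im\lambda)\,\Im v\bigr).
\]
I then let $z \in \mathbb{C}^d$ be the vector corresponding to $w$.

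The last step is the dichotomy on $h(s) := \norm{\sqrt{\mathbf{C}}\,\me^{s\mathbf{Z}}w}^2$. In the real-$\lambda$ subcase, $h(s) = \me^{2s\lambda}\scalar{v}{\mathbf{C}v}$; either $\scalar{v}{\mathbf{C}v} = 0$, equivalently $\mathbf{C}v = 0$, which gives case~1, or $\scalar{v}{\mathbf{C}v} > 0$ and, since $\me^{2s\lambda} \ge 1$ for $\lambda \ge 0$, the integral diverges, giving case~2. In the non-real subcase, $h(s) = \me^{2s\Re\lambda}\widetilde h(s)$, where $\widetilde h$ is non-negative and $2\pi/|\Im\lambda|$-periodic; if $\widetilde h \equiv 0$, the linear independence of $\cos$ and $\sin$ forces $\sqrt{\mathbf{C}}\,\Re v = \sqrt{\mathbf{C}}\,\Im v = 0$, hence $\mathbf{C}v = 0$ and $\mathbf{C}\,\me^{s\mathbf{Z}}w = \Re(\me^{s\lambda}\mathbf{C}v) = 0$ for all $s$ (case~1); otherwise $\int_0^t \widetilde h(s)\md s$ grows at least linearly by periodicity and $\me^{2s\Re\lambda} \ge 1$ forces divergence of $\int_0^t h(s)\md s$ (case~2).

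The main obstacle is the bookkeeping between the complex eigenstructure of $\mathbf{Z}$ on $\mathbb{C}^{2d}$ and the requirement that $w$ be real, which is exactly what permits $w$ to correspond to an honest $z \in \mathbb{C}^d$; this compels the split between real and non-real $\lambda$. The non-real case is the more delicate one, since one cannot work directly with the complex eigenvector and must instead analyze the oscillatory modulation $\widetilde h$, using positivity of $\mathbf{C}$ to extract the vanishing condition $\mathbf{C}v = 0$ from $\widetilde h \equiv 0$.
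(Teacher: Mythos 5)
Your proposal is correct and follows essentially the same route as the paper: pick an eigenvector of $\mathbf{Z}$ for an eigenvalue with $\Re\lambda\geq 0$, pass to the real vectors $\Re v,\Im v$ (the paper uses the equivalent $w+\conj{w}$ and $\mi(w-\conj{w})$), split into the cases $\Im\lambda=0$ and $\Im\lambda\neq 0$, and in the oscillatory case use that a non-negative periodic integrand is either identically zero (giving $\me^{tZ}z\in\ker C$) or has linearly divergent integral. Your use of the factorization $\mathbf{C}=\sqrt{\mathbf{C}}^*\sqrt{\mathbf{C}}$ to phrase the integrand as a squared norm is a minor cosmetic streamlining of the same argument.
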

\begin{proof}
	Suppose $\lambda$ is an eigenvalue for $\mathbf{Z}$ with $\Re\lambda \geq 0$ and let $w \in \mathbb{C}^{2d}$ be its associated eigenvector. This implies that $\conj{w}$ is an eigenvector for $\conj{\lambda}$. Consider $x = w + \conj{w}$ and $y=\mi(w-\conj{w})$, both belonging to $\mathbb{R}^{2d}$ and satisfying
	\[
		\mathbf{Z}x = \Re\lambda x +\Im\lambda y, \quad \mathbf{Z}y = -\Im\lambda x + \Re\lambda y.
	\]
	If $\Im \lambda = 0$ we can set $z\in \mathbb{C}^d$ as the element corresponding to $w$ and have $\me^{tZ}z = \me^{t\Re\lambda}z$, in particular
	\[
		\int_0^t \Re\scalar{\me^{sZ}z}{C\me^{sZ}z} \md s = \Re\scalar{z}{Cz} \int_0^t \me^{2s\Re\lambda} \md s.
	\]
	This implies either $z \in \ker C$ or it converges to infinity, proving the result in the case $\Im\lambda = 0$. Suppose then $\Im\lambda \neq 0$. On the space generated by $x,y$ we have
	\[
		Z = \begin{bmatrix}
			\Re\lambda & \Im\lambda \\
			-\Im\lambda & \Re\lambda
		\end{bmatrix}, \quad \me^{tZ} = \me^{t\Re\lambda}\begin{bmatrix}
			\cos(t\Im\lambda) & \sin(t\Im\lambda) \\
			-\sin(t\Im\lambda) & \cos(t\Im\lambda)
		\end{bmatrix}.
	\]
	For any $z \in \mathbb{C}^d$ corresponding to a real linear combination of $x, y$ we have
	\begin{align*}
		\int_0^t &\Re\scalar{\me^{sZ}z}{C\me^{sZ}z} \md s\\
		& = \int_0^t  \scalar{\begin{bmatrix}
			\cos(s\Im\lambda) & \sin(s\Im\lambda) \\
			-\sin(s\Im\lambda) & \cos(s\Im\lambda)
		\end{bmatrix}\begin{bmatrix}
			\Re z \\ \Im z
		\end{bmatrix}}{\mathbf{C} \begin{bmatrix}
			\cos(s\Im\lambda) & \sin(s\Im\lambda) \\
			-\sin(s\Im\lambda) & \cos(s\Im\lambda)
		\end{bmatrix}\begin{bmatrix}
			\Re z \\ \Im z
		\end{bmatrix}} \me^{2s\Re\lambda} \md s \\
		&\geq \int_0^t  \scalar{\begin{bmatrix}
			\cos(s\Im\lambda) & \sin(s\Im\lambda) \\
			-\sin(s\Im\lambda) & \cos(s\Im\lambda)
		\end{bmatrix}\begin{bmatrix}
			\Re z \\ \Im z
		\end{bmatrix}}{\mathbf{C} \begin{bmatrix}
			\cos(s\Im\lambda) & \sin(s\Im\lambda) \\
			-\sin(s\Im\lambda) & \cos(s\Im\lambda)
		\end{bmatrix}\begin{bmatrix}
			\Re z \\ \Im z
		\end{bmatrix}} \md s
	\end{align*}
	The scalar product inside the integral is non-negative and periodic as a function of $s$, therefore either it is identically zero or the integral diverges. Eventually, the scalar product being identically zero is equivalent to $\me^{tZ}z \in \ker C$ for every $t\geq 0$, completing the proof.
\end{proof}

\begin{proposition} \label{prop:gapImpliesZstable}
	Let $\mathcal{T}$ be a gaussian QMS with an invariant state $\rho$ and a positive spectral gap $g$. Then $Z$ is stable.
\end{proposition}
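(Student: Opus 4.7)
The argument will be by contradiction. Assume $Z$ is not stable, and apply Lemma \ref{lem:stabilityZ} to extract a nonzero $z \in \mathbb{C}^d$ for which either (i) $\me^{tZ}z \in \ker C$ for every $t \geq 0$, or (ii) $\int_0^t \Re\scalar{\me^{sZ}z}{C\me^{sZ}z}\md s \to +\infty$ as $t \to \infty$. The goal is to rule out (ii) using the existence of $\rho$, and then, in case (i), exhibit an element $\widetilde{W(z)} \neq 0$ whose $T_t$-orbit has constant norm, contradicting the assumed spectral gap $g > 0$.

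Case (ii) is dispatched using the Lyapunov equation $Z^\sharp S + S Z = -C$ satisfied by the covariance of any invariant Gaussian state (from \eqref{eq:invStateEquation}). Integrating the identity $\frac{\md}{\md t}\Re\scalar{\me^{tZ}z}{S\me^{tZ}z} = -\Re\scalar{\me^{tZ}z}{C\me^{tZ}z}$ gives
\[
\Re\scalar{\me^{tZ}z}{S\me^{tZ}z} = \Re\scalar{z}{Sz} - \int_0^t \Re\scalar{\me^{sZ}z}{C\me^{sZ}z}\md s.
\]
The quantum uncertainty condition $\mathbf{S} + \mi\mathbf{J} \geq 0$, which holds for the covariance of any quantum state, restricts to $\mathbf{S} \geq 0$ on real vectors of $\mathbb{R}^{2d}$; the left-hand side is therefore non-negative, which is incompatible with divergence of the integral.

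In case (i), the dissipative integral in \eqref{eq:explWeyl} vanishes identically, so $\mathcal{T}_t(W(z)) = W(\me^{tZ}z)$ (with $\zeta = 0$), while the same Lyapunov identity above now shows $\widehat{\rho}(\me^{tZ}z) = \widehat{\rho}(z) =: \alpha$, independent of $t$. Writing $T_t(\widetilde{W(z)}) = \bigl(W(\me^{tZ}z) - \alpha\unit\bigr)\rho^{1/2}$ and using unitarity of the Weyl operators together with $\tr(\rho W(\pm \me^{tZ}z)) = \alpha$, a short CCR computation yields
\[
\norm{T_t(\widetilde{W(z)})}_2^2 = 1 - \alpha^2 \qquad \forall\, t \geq 0,
\]
i.e., the orbit has constant Hilbert--Schmidt norm.

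The final step, which I expect to be the main obstacle, is to verify that $\alpha < 1$, so that the constant-norm orbit is actually nonzero and genuinely violates exponential decay. Suppose instead $\Re\scalar{z}{Sz} = 0$; then the real vector $v := [\Re z, \Im z]^T \neq 0$ satisfies $\scalar{v}{\mathbf{S}v} = 0$, and since $\mathbf{J}$ is skew-symmetric also $\scalar{v}{\mathbf{J}v} = 0$. Hence $\scalar{v}{(\mathbf{S}+\mi\mathbf{J})v} = 0$, and positive semidefiniteness of $\mathbf{S}+\mi\mathbf{J}$ forces $(\mathbf{S}+\mi\mathbf{J})v = 0$; taking real and imaginary parts gives $\mathbf{S}v = 0$ and $\mathbf{J}v = 0$, which contradicts invertibility of $\mathbf{J}$. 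Therefore $\alpha \in (0,1)$, $\norm{\widetilde{W(z)}}_2^2 = 1 - \alpha^2 > 0$, and the constancy of $\norm{T_t(\widetilde{W(z)})}_2$ contradicts $\norm{T_t(\widetilde{W(z)})}_2 \leq \me^{-gt}\norm{\widetilde{W(z)}}_2$. The delicate point is precisely this last one: strict positivity of $\mathbf{S}$, and hence nonvanishing of $\widetilde{W(z)}$, rests squarely on the symplectic uncertainty relation rather than on any a priori faithfulness of $\rho$.
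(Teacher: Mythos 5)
Your argument is correct only under an extra hypothesis that you introduce silently: that the invariant state $\rho$ is \emph{gaussian}, with a covariance operator $S$ that exists and solves the Lyapunov equation $Z^\sharp S+SZ=-C$. The proposition assumes only \emph{an} invariant state, and the distinction is not cosmetic here: the existence of a gaussian invariant state is exactly what Theorem \ref{thm:invariantState} derives \emph{from} stability of $Z$, so it cannot be presupposed while proving that stability is necessary (note also that a gaussian invariant state can coexist with a non-stable $Z$, e.g.\ $Z=0$, $C=0$, so the assumption is genuinely restrictive rather than automatic). Concretely, your case (ii) rests entirely on the identity $\Re\scalar{\me^{tZ}z}{S\me^{tZ}z}=\Re\scalar{z}{Sz}-\int_0^t\Re\scalar{\me^{sZ}z}{C\me^{sZ}z}\,\md s$, which has no meaning for an invariant state without second moments; and in case (i) both the constancy $\widehat{\rho}(\me^{tZ}z)=\widehat{\rho}(z)$ as you derive it and the nondegeneracy $|\widehat{\rho}(z)|<1$ are extracted from the gaussian form of $\widehat{\rho}$ and the uncertainty relation $\mathbf{S}+\mi\mathbf{J}\geq 0$. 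The paper's proof avoids all of this by working with the characteristic function of an arbitrary invariant state: in case (ii) invariance gives $|\widehat{\rho}(rz)|\leq c_t(rz)\to 0$ for every $r>0$, contradicting continuity of $\widehat{\rho}$ at $0$ together with $\widehat{\rho}(0)=1$; in case (i) it shows $\norm{T_t(\widetilde{W(z)})}_2^2\geq 1-|\widehat{\rho}(z)|^2$ for all $t$ and rules out $|\widehat{\rho}(z)|=1$ because that would force $W(z)\rho^{1/2}$ to be a scalar multiple of $\rho^{1/2}$.

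The parts of your argument that do go through are sound, and in the gaussian setting your case (ii) --- monotone decrease of $t\mapsto\Re\scalar{\me^{tZ}z}{S\me^{tZ}z}$ colliding with positivity of $S$ --- is arguably cleaner than the paper's limit argument; your case (i) computation that $\norm{T_t(\widetilde{W(z)})}_2^2$ is constant in $t$ is exactly the right obstruction, and your use of $\mathbf{S}+\mi\mathbf{J}\geq 0$ plus invertibility of $\mathbf{J}$ to get $\Re\scalar{z}{Sz}>0$ without faithfulness is a nice touch. To repair the proof without assuming gaussianity: in case (i), $c_t(z)=1$ and invariance alone already give $\widehat{\rho}(\me^{tZ}z)=\widehat{\rho}(z)$, your norm computation is valid for any state, and $|\widehat{\rho}(z)|=1$ is impossible because equality in $|\scalar{\rho^{1/2}}{W(z)\rho^{1/2}}_2|\leq 1$ would make every vector of $\ran\rho^{1/2}$ an eigenvector of the unitary $W(z)$, which has empty point spectrum for $z\neq 0$ by the CCR; case (ii), however, must be replaced by the characteristic-function argument, since the Lyapunov route is unavailable. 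Finally, write $1-|\widehat{\rho}(z)|^2$ rather than $1-\alpha^2$: without the zero-mean gaussian form, $\widehat{\rho}(z)$ need not be real.
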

\begin{proof}
	Suppose $\mathbf{Z}$ has an eigenvalue $\lambda$ with $\Re\lambda \geq 0$. From Lemma \ref{lem:stabilityZ} we can find $z \in \mathbb{C}^d$ such that either  $\lim_{t \to \infty} \int_0^t \Re\scalar{\me^{sZ}z}{C\me^{sZ}z} \md s = +\infty$ or $\me^{tZ}z \in \ker C$ for every $t \geq 0$. In the former case, the constant $c_t(z)$ multiplying the Weyl operator in \eqref{eq:explWeyl} satisfies
	\begin{equation}\label{eq:ctz}
		c_t(z) = \exp \left( -\frac{1}{2} \int_0^t \rescalar{\me^{sZ}z}{C\me^{sZ}z} \md s \right) \longrightarrow 0.
	\end{equation}

We can explicitly compute	
	
	\begin{equation} \label{eq:charFunctAt0}
		|\hat{\rho}(z)| = |\tr ( \rho W(z)| = |\tr (\rho \mathcal{T}_t (W(z))| = |c_t(z) \hat{\rho}(\me^{tZ}z)| = c_t(z) |\hat{\rho}(\me^{\lambda t}z)| \leq c_t(z),
	\end{equation}
	
and,, for every $r>0$, equation \eqref{eq:charFunctAt0} reads
	\(
		|\hat{\rho}(rz)| = 0.
	\)
	This is a contradiction since the characteristic function is continuous and $\hat{\rho}(0) = 1$.
	On the other hand, if $z \in \ker C$, from explicit computations
	\[
		\identity = \mathcal{T}_t(W(z)) \mathcal{T}_t(W(z))^*.
	\]
	Therefore
	\[
		\norm{T_t(W(z)\rho^{1/2} )}^2_2 = \tr \left( \rho \mathcal{T}_t(W(z))^* \mathcal{T}_t(W(z)) \right) = 1
	\]
	and
	\[
		\norm{T_t(W(z)\rho^{1/2}) - \scalar{\rho^{1/2}}{W(z)\rho^{1/2}}_2 \rho^{1/2}}^2_2 \geq 1 - \modulo{\hat{\rho}(z)}^2.
	\]
	For the semigroup to have a spectral gap and inequality \eqref{def-g} to hold we must have $\modulo{\hat{\rho}(z)} = 1$ which is a contradiction since then
	\[
		\norm{W(z)\rho^{1/2} - \scalar{\rho^{1/2}}{W(z)\rho^{1/2}}_2 \rho^{1/2}}_2^2 = 1 - \modulo{\hat{\rho}(z)}^2 = 0.
	\]
\end{proof}

To prove the necessity of $\CZ > 0$, by contradiction, one can show that if $\ker \CZ \neq \{ 0 \}$ then \eqref{def-g}
does not hold for some $x \in \mathfrak{I}_2(\mathsf{h})$ and $t > 0$ and so there is no strictly positive spectral gap.
The intuition behind the proof is that if $(z_p,z_q) \in \ker \CZ$ with $z_p, z_q \in \mathbb{C}^d$ then \eqref{def-g}
fails when considering $x = \sum_{j=1}^d(z_{p,j} p_j - z_{q,j} q_j)\rho^{1/2}$. This choice for $x$ however
is not rigorous since $\sum_{j=1}^d(z_{p,j} p_j - z_{q,j} q_j) \not\in \mathcal{B}(\mathsf{h})$ and we have to work
out computations through a limiting procedure.
However, we can prove the following proposition whose technical details are deferred to Appendix B.

\begin{proposition} \label{prop:gapImpliesCZinvertible}
	Let $\mathcal{T}$ be a gaussian QMS with a faithful invariant state $\rho$. If $\ker \CZ \neq \{ 0\}$.
Then the semigroup has no spectral gap.
\end{proposition}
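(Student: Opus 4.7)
The plan is to argue by contradiction using a mild reinterpretation of Proposition \ref{prop:spectralGapSharp}. Although that proposition is stated under the blanket assumption $\CZ>0$ of Section \ref{sec:spectralGap} (which forces $\omega_0<0$), its proof nowhere uses the sign of $\omega_0$: the only requirements are that $\tilde{\mathbf{S}}=\mathbf{S}+\mi\mathbf{J}$ be invertible, i.e.\ that $\rho$ be faithful, and that $\omega<\omega_0$. Under the hypothesis $\ker\CZ\neq\{0\}$ I would first show $\omega_0=0$. From the identity \eqref{itm:dissipativeGenerator},
\[
\tilde{\mathbf{S}}^{1/2}\mathbf{Z}\tilde{\mathbf{S}}^{-1/2}+\tilde{\mathbf{S}}^{-1/2}\mathbf{Z}^*\tilde{\mathbf{S}}^{1/2}=-\tilde{\mathbf{S}}^{-1/2}\CZ\tilde{\mathbf{S}}^{-1/2},
\]
the right-hand side is negative semi-definite (since $\CZ\ge 0$ by Proposition \ref{prop:CandZ}) with kernel $\tilde{\mathbf{S}}^{1/2}(\ker\CZ)\neq\{0\}$, using that $\tilde{\mathbf{S}}$ is invertible by faithfulness. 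Hence the largest eigenvalue of the left-hand side, namely $\omega_0$, is exactly $0$.

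Next, for an arbitrary candidate spectral gap $g>0$ I would set $\omega:=-2g<0=\omega_0$, pick a nonzero $z\in\ker\CZ\subset\mathbb{C}^{2d}$ (so that $\tilde{\mathbf{S}}^{1/2}z$ is automatically an eigenvector of the symmetric matrix above with eigenvalue $\omega_0=0$), and replay the construction of Proposition \ref{prop:spectralGapSharp} verbatim: decompose $z=[z_p,z_q]^T\in\mathbb{C}^d\times\mathbb{C}^d$, set $z_1:=\Re z_p+\mi\Re z_q$ and $z_2:=\Im z_p+\mi\Im z_q$, and form $x_r:=W(rz_1)+\mi W(rz_2)\in\mathcal{W}$ for $r>0$.

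The same Taylor expansion in $r$ as in that proof yields, for
\[
f(r):=\left.\frac{d}{dt}\bigl(\|T_t\widetilde{x_r}\|_2^2-e^{t\omega}\|\widetilde{x_r}\|_2^2\bigr)\right|_{t=0},
\]
that $f(0)=f'(0)=0$ while
\[
f''(0)=2(\omega_0-\omega)\,s_0(z,z)=4g\,s_0(z,z)>0,
\]
the strict positivity coming from $g>0$, from $z\neq 0$, and from faithfulness of $\rho$, which gives $s_0(z,z)=\langle z,\tilde{\mathbf{S}}z\rangle>0$ via sesquilinear extension. Hence $f(r)>0$ for all sufficiently small $r>0$; for each such $r$ there exists $\delta>0$ with $\|T_t\widetilde{x_r}\|_2^2>e^{-2gt}\|\widetilde{x_r}\|_2^2$ on $(0,\delta)$, contradicting \eqref{def-g} for this $g$. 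Since $g>0$ was arbitrary, no positive spectral gap can exist.

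The only delicate point is a hygienic one: one must reread the proof of Proposition \ref{prop:spectralGapSharp} and confirm that the parenthetical ``$\omega<\omega_0<0$'' there serves only to express $\omega_0-\omega>0$ and not, tacitly, the sign of $\omega_0$ itself. This short route bypasses the hands-on ``limiting procedure'' alluded to by the authors, which would try to realize the formal unbounded Hilbert--Schmidt element $\bigl(\sum_j(z_{p,j}p_j-z_{q,j}q_j)\bigr)\rho^{1/2}$ as a norm-limit of rescaled differences $s^{-1}\widetilde{W(sw)}$ and read off the failure of \eqref{def-g} directly; that alternative works but requires a separate approximation argument with careful control of the projection onto $(\rho^{1/2})^{\perp}$, which is where I would expect the main obstacle to lie if one pursued that path instead.
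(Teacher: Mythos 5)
Your argument is correct, and it reaches the conclusion by a genuinely different route from the paper's. The paper's proof (Appendix B) works with the unbounded difference quotients $R_z(r)=(W(rz)-\identity)/(-\mi r)$, computes $\frac{\md}{\md t}\big\Vert\mathcal{T}_t(\tilde R_{z_p,z_q}(r))\rho^{1/2}\big\Vert_2^2$ at $t=0$ via Lemma \ref{lem:derivative0DoppioWeyl} (which produces the factor $\langle\cdot,\CZ\cdot\rangle$, vanishing on $\ker\CZ$) and Lemma \ref{lem:charFunctionPolynomials} (which gives $\lim_{r\to0^+}\Vert\tilde R\rho^{1/2}\Vert_2^2>0$), and then lets $r\to0^+$. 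You instead stay inside $\mathcal{W}$, observe that $\ker\CZ\neq\{0\}$ together with $\CZ\ge0$, faithfulness and the Lyapunov identity $\mathbf{Z}^*\tilde{\mathbf{S}}+\tilde{\mathbf{S}}\mathbf{Z}=-\CZ$ forces $\omega_0=0$, and reuse the second-order expansion in $r$ of Proposition \ref{prop:spectralGapSharp} with the eigenvector $\tilde{\mathbf{S}}^{1/2}z$, $z\in\ker\CZ$. Your reading of that proof is right: the parenthetical $\omega_0<0$ enters only through $\omega_0-\omega>0$, and $f''(0)=2(\omega_0-\omega)\langle z,\tilde{\mathbf{S}}z\rangle>0$ needs only faithfulness and $z\neq0$. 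The test elements in the two proofs are in fact essentially the same, since $\widetilde{\tilde R_{z_p,z_q}(r)}=(-\mi r)^{-1}\widetilde{x_r}$; what your route buys is that Appendix B becomes unnecessary and the structural point, not stated in the paper, that the candidate gap $-\omega_0/2$ degenerates to $0$ exactly when $\CZ$ does. Your contradiction step is also slightly cleaner: \eqref{def-g} applied to $x=x_r\rho^{1/2}$ literally reads $\Vert T_t\widetilde{x_r}\Vert_2\le\me^{-gt}\Vert\widetilde{x_r}\Vert_2$, whereas the paper compares unprojected norms and tacitly uses that $\tr(\rho\,\tilde R_{z_p,z_q}(r))\to0$.

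One loose end should be tied. Proposition \ref{prop:spectralGapSharp} rests on formula \eqref{eq:normSpectralGap1}, and the proof of Lemma \ref{lem:normSpectralGap} invokes the integral representation \eqref{cond-su-S} of $S$, which presupposes $Z$ stable --- a hypothesis your statement does not carry. Either import stability explicitly (legitimate in the only place the proposition is used, since Theorem \ref{thm:assumptionsJustification} establishes Proposition \ref{prop:gapImpliesZstable} first), or observe that the identity actually needed there, namely $\int_0^t\Re\langle\me^{sZ}w,C\me^{sZ}w\rangle\,\md s+\Re\langle\me^{tZ}w,S\me^{tZ}w\rangle=\Re\langle w,Sw\rangle$, follows from $Z^\sharp S+SZ=-C$ alone by differentiating in $t$, with no stability required. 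With that remark in place your proof is complete, under the same implicit assumption as the paper's own proof that the faithful invariant state is gaussian with covariance $S$.
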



\begin{proof}[Proof of Theorem \ref{thm:assumptionsJustification}]
	From Proposition \ref{prop:gapImpliesZstable} we have stability of $Z$, while Proposition \ref{prop:gapImpliesCZinvertible} gives $\CZ >0$.
\end{proof}

\section{A one-dimensional case} \label{sec:oneDimCase}
The detailed analysis of a one-dimensional case with the explicit computation of $g$ in terms
of a few parameters clarifies why the condition $\CZ>0$ is necessary for $g>0$. Moreover, it shows that one can find
a quantum Ornstein-Uhlenbeck process with $g=0$ with a classical subprocess, obtained by restriction to an
abelian subalgebra, with strictly positive spectral gap.

Fix $d=1$. In order to further simplify the notation we consider
\[
L_1=\mu a,\qquad L_2=\lambda a^\dagger,\qquad H=\Omega\, a^\dagger a + \kappa(a^{\dagger\, 2}+a^2)/2
\] with
$0<\lambda<\mu$, $\Omega,\kappa\in\mathbb{R}$. Define $\gamma=(\mu^2-\lambda^2)/2$.
In this framework a faithful invariant state exists if and only if $\gamma^2+\Omega^2-\kappa^2>0$
(see \cite{AFP} Theorem 9 and Proposition 5). Moreover
\[
\mathbf{C}= \left[\begin{array}{cc} \mu^2+\lambda^2 & 0 \\ 0 & \mu^2+\lambda^2 \end{array}\right], \quad
\mathbf{Z}= \left[\begin{array}{cc} -\gamma & \kappa-\Omega \\ \kappa+\Omega & -\gamma \end{array}\right], \quad
\mathbf{C}_{\mathbf{Z}} = \left[\begin{array}{cc} \mu^2+\lambda^2 & 2\mi\gamma \\ -2\mi\gamma & \mu^2+\lambda^2 \end{array}\right]
\]
Solving $\mathbf{Z}^*\mathbf{S}+\mathbf{S}\mathbf{Z}+\mathbf{C}=0$ we find the covariance matrix of the unique invariant state
\begin{equation} \label{eq:Sonedim}
\mathbf{S}= \frac{\mu^2+\lambda^2}{2\gamma(\gamma^2+\Omega^2-\kappa^2)}
\left[\begin{array}{cc} \gamma^2+\Omega(\Omega+\kappa) & \kappa\gamma \\ \kappa\gamma & \gamma^2+\Omega(\Omega-\kappa) \end{array}\right]
\end{equation}
Since  $\tilde{\mathbf{S}}^\frac{1}{2}\mathbf{Z} \tilde{\mathbf{S}}^{-\frac{1}{2}} + \tilde{\mathbf{S}}^{-\frac{1}{2}}\mathbf{Z}^* \tilde{\mathbf{S}}^{\frac{1}{2}} = - \tilde{\mathbf{S}}^{-\frac{1}{2}}\mathbf{C}_{\mathbf{Z}}\tilde{\mathbf{S}}^{-\frac{1}{2}}$,
we have $\mathbf{C}_{\mathbf{Z}}\tilde{\mathbf{S}}^{-1}=-\mathbf{Z}^*-\tilde{\mathbf{S}}\mathbf{Z} \tilde{\mathbf{S}}^{-1}$,
therefore
\[
\operatorname{tr}\left(\mathbf{C}_{\mathbf{Z}}\tilde{\mathbf{S}}^{-1}\right)=4\gamma.
\]
In addition
\[
\operatorname{det}\left(\mathbf{C}_{\mathbf{Z}}\tilde{\mathbf{S}}^{-1}\right) =
\frac{\operatorname{det}\left(\mathbf{C}_{\mathbf{Z}}\right)}{\operatorname{det}\left(\tilde{\mathbf{S}}\right)}
= \frac{4\mu^2\lambda^2}{\operatorname{det}(\mathbf{S})-1},
\]
yielding eventually
\[
\operatorname{det}\left(\mathbf{C}_{\mathbf{Z}}\tilde{\mathbf{S}}^{-1}\right)
= \frac{4\mu^2\lambda^2}{\frac{(\gamma^2+\Omega^2)(\mu^2+\lambda^2)^2}{4\gamma^2(\gamma^2+\Omega^2-\kappa^2)}-1}
= \frac{4 \mu^2\lambda^2 \gamma^2(\gamma^2+\Omega^2-\kappa^2)}{\mu^2\lambda^2(\gamma^2+\Omega^2)+\gamma^2\kappa^2}
\]
Eigenvalues of $\CZ\tilde{\mathbf{S}}^{-1}$ are the roots of $r^2-\operatorname{tr}\left(\mathbf{C}_{\mathbf{Z}}\tilde{\mathbf{S}}^{-1}\right) r
+ {\operatorname{det}\left(\mathbf{C}_{\mathbf{Z}}\right)}/{\operatorname{det}\left(\tilde{\mathbf{S}}\right)}=0$
and the spectral gap is one-half of
\[
\gamma-\sqrt{\gamma^2-\frac{ \mu^2\lambda^2 \gamma^2(\gamma^2+\Omega^2-\kappa^2)}{\mu^2\lambda^2(\gamma^2+\Omega^2)+\gamma^2\kappa^2}}
= \gamma\left(1-\frac{|\kappa|(\mu^2+\lambda^2)}{2\sqrt{(\mu^2\lambda^2(\gamma^2+\Omega^2)+\gamma^2\kappa^2)}}\right)
\]

Note that, for $\lambda=0$ and $\kappa\not=0$, even the invariant state is faithful by
\[
\operatorname{det}\left(\tilde{\mathbf{S}}\right)=\operatorname{det}(\mathbf{S})-1
=\frac{\kappa^2}{4\gamma^2(\mu^4+\Omega^2)}>0\, ,
\]
but the spectral gap is zero because $\operatorname{det}(\CZ)=0$. If, in addition $\kappa = 2\Omega $ so that $H=\Omega (q^2 +\unit)/2$
\[
\mathcal{L}(f(q)) = \frac{\mu^2+\lambda^2}{4}f''(q) - \frac{\mu^2-\lambda^2}{2}qf'(q)
\]
which is the generator of a classical Ornstein-Uhlenbeck process which is also symmetric.
The density of the invariant measure (up to the normalization constant) is $\mathrm{e}^{-(\mu^2-\lambda^2)q^2/(\mu^2+\lambda^2)}$
and the spectral gap is $(\mu^2-\lambda^2)/2 >0$.

\section{The spectral gap for the KMS embedding}\label{sect:KMSembed}
As we already recalled in the introduction, the invariant density can induce different semigroups $T$ on the space
of Hilbert-Schmidt operators due to non-commutativity. The $i_{1/2,2}$ embedding has the advantage of producing
a semigroup $T$ of self-adjoint operators in more cases (see \cite{FFVU}), but at the cost of longer calculations
because of the computations for quantities of the kind of $\tr(\rho^{1/2}W(w)\rho^{1/2}W(z))$. These can be simplified by symplectic
diagonalization of $S$ but their explicit derivation is deferred to Appendix \ref{sec:appC}.

In this section we present the computation of the spectral gap when we consider the KMS embedding, highlighting the differences with the GNS case and the changes to be made for the calculations to go through. The spectral gap is again defined through  condition \eqref{def-g},
however the semigroup $T_t$ is
no more defined by $T_t(x\rho^{1/2}) = \mathcal{T}_t(x)\rho^{1/2}$ but instead by an adjusted version for the KMS-embedding

\[
	T_t ( \rho^{\frac{1}{4}} x \rho^{\frac{1}{4}}) = \rho^\frac{1}{4}\mathcal{T}_t(x)\rho^{\frac{1}{4}},
\quad \forall x \in \mathcal{B}(\mathbf{h}).
\]
We will show that many results obtained in Section \ref{sec:spectralGap} follow similarly with this new embedding,
apart for some adjustments and many more calculations. The overall result can be briefly summarized by saying that the spectral
gap is no longer $-1/2$ times the greatest eigenvalue of
$\mathbf{\tilde{S}}^\frac{1}{2}\mathbf{Z}\mathbf{\tilde{S}}^{-\frac{1}{2}} + \mathbf{\tilde{S}}^{-\frac{1}{2}}\mathbf{Z}^*\mathbf{\tilde{S}}^\frac{1}{2}$
but $-1/2$ times the greatest eigenvalue of an alike matrix obtained by replacing $\mathbf{\tilde{S}}$ by another operator
$\breve{S}$, still linked with $S$. Remarkably, there will be no need to consider complexifications of matrices since,
in this case, real linear operators will be sufficient to describe the behaviour of the semigroup.

We start by considering $x \in \mathcal{W}$, this time we denote with $\breve{x}$ the projection of $\rho^\frac{1}{4}x\rho^{\frac{1}{4}}$ onto the orthogonal of $\rho^\frac{1}{2}$, namely
\begin{equation} \label{eq:brevex}
	\breve{x} = \rho^\frac{1}{4} \left[\sum_{j=1}^l \eta_j\left(W(z_j) - \me^{-\frac{1}{2}\Re\scalar{z_j}{Sz_j}}\identity\right)\right]\rho^{\frac{1}{4}}.
\end{equation}
An analogous result to Lemma \ref{lem:normSpectralGap} and Proposition \ref{prop:spectralGapLinearCombinations} holds, with some slight adjustments. First of all we recall that every covariance matrix of a gaussian state can be symplectically diagonalized \cite{KRPSymm}. Explicitly, there exists a symplectic transformation $M$ on $\mathbb{C}^d$, i.e. a real linear operator satisfying $\Im\scalar{Mz}{Mw} = \Im\scalar{z}{w}$ for every $z,w \in \mathbb{C}^d$, such that $S = M^TD_\sigma M$ with $D_\sigma$ the diagonal matrix with entries $\sigma_1, \dots, \sigma_d \in (1, +\infty)$. We can now state the following result.

\begin{lemma}\label{lem:normSpectralGapKMS}
Let $x \in \mathcal{W}$ and set $\xi_j = \mathrm{e}^{-\frac{1}{2} \langle z_j,S z_j\rangle}\eta_j$ for all $j=1,\ldots,n$.
Then
\begin{align}\label{eq:normSpectralGap1KMS}
\left\Vert T_t(\breve{x})\right\Vert_2^2 &=
\sum_{j,k=1}^l\overline{\xi}_j\xi_k \left(
      \mathrm{exp} \left(\Re\scalar{\me^{tZ}z_j}{\breve{S}\me^{tZ}z_k }\right) - 1\right)
\\                            \label{eq:normSpectralGap2KMS}
& = \sum_{j,k=1}^l  \conj{\xi_j}\xi_k \left(\mathrm{e}^{\breve{s}_t (z_j, z_k)}-1\right)= \sum_{n \geq 1}\sum_{j,k=1}^l  \frac{\conj{\xi_j}\xi_k}{n!} \left( \breve{s}_t (z_j, z_k) \right)^n
\end{align}
for all $t\geq 0$, where $\breve{S} = M^T D_\nu M$, $M$ is the above symplectic transformation, $\nu_j = \operatorname{csch}\coth^{-1}(\sigma_j)$ and

\[
	\breve{s}_t(z,w) := \Re\scalar{\me^{tZ}z}{\breve{S}\me^{tZ}w }\qquad\forall\,t\geq 0,\ z,w\in\mathbb{C}^d.
\]
In particular we have
\begin{equation}\label{s_t-diag-KMS}	
\breve{s}_t(z,z) = \norm{\me^{t \breve{S}^{1/2}Z\breve{S}^{-1/2}}\breve{S}^{1/2}z}^2
\end{equation}
for all $t\geq 0$ and $z\in\mathbb{C}^d$.
\end{lemma}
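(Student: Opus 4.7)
The plan is to parallel the proof of Lemma \ref{lem:normSpectralGap}, tracking the changes introduced by the two-sided multiplication by $\rho^{1/4}$. Starting from \eqref{eq:brevex}, cyclicity of the trace gives
\[
\Vert T_t(\breve{x})\Vert_2^2 = \sum_{j,k=1}^l \overline{\eta}_j \eta_k \,\tr\!\left(\rho^{1/2}\bigl(\mathcal{T}_t(W(-z_j))-\me^{-\frac{1}{2}\Re\langle z_j,Sz_j\rangle}\unit\bigr)\,\rho^{1/2}\bigl(\mathcal{T}_t(W(z_k))-\me^{-\frac{1}{2}\Re\langle z_k,Sz_k\rangle}\unit\bigr)\right),
\]
and then substituting the explicit action \eqref{eq:explWeyl} reduces the problem to evaluating the KMS-type two-point functions $\tr(\rho^{1/2} W(w)\rho^{1/2} W(z))$.

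The main obstacle, and genuinely new input relative to the GNS case, is computing this KMS trace. I would handle it by symplectic diagonalization $S = M^T D_\sigma M$, which transforms $\rho$ into a tensor product of one-mode thermal states and reduces the $d$-dimensional trace to a product of one-mode integrals (this is the content deferred to Appendix \ref{sec:appC}). The one-mode computation gives a Gaussian expression in which each symplectic eigenvalue $\sigma_j$ appears through $\operatorname{csch}\coth^{-1}(\sigma_j) = \nu_j$, rather than through $\sigma_j$ itself as in the GNS case; this is precisely the reason the operator $\breve{S} = M^T D_\nu M$ replaces $S$ in the cross terms while $S$ continues to control the diagonal normalizations. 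The resulting scalar, before simplification, has the schematic form
\[
\exp\!\left(-\tfrac{1}{2}\int_0^t\Re\langle \me^{sZ}z_j,C\me^{sZ}z_j\rangle\md s - \tfrac{1}{2}\int_0^t\Re\langle \me^{sZ}z_k,C\me^{sZ}z_k\rangle\md s + \Re\langle \me^{tZ}z_j,\breve{S}\me^{tZ}z_k\rangle\right).
\]

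From here the argument proceeds exactly as in Lemma \ref{lem:normSpectralGap}: use the Lyapunov identity $S = \int_0^\infty \me^{sZ^\sharp}C\me^{sZ}\md s$ in the form $\Re\langle z_j,Sz_j\rangle = \int_0^t\Re\langle \me^{sZ}z_j,C\me^{sZ}z_j\rangle\md s + \Re\langle\me^{tZ}z_j,S\me^{tZ}z_j\rangle$ (and the same for $z_k$) to rewrite the integrals as boundary terms, then subtract the $-\unit$ contributions and factor out $\me^{-\frac{1}{2}\Re\langle z_j,Sz_j\rangle}\me^{-\frac{1}{2}\Re\langle z_k,Sz_k\rangle}$. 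Setting $\xi_j = \me^{-\frac{1}{2}\Re\langle z_j,Sz_j\rangle}\eta_j$ collapses the expression to \eqref{eq:normSpectralGap1KMS}, and Taylor-expanding the exponential yields \eqref{eq:normSpectralGap2KMS}.

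Finally, for \eqref{s_t-diag-KMS} I would observe that $\breve{S}$ is real-linear, symmetric and positive with respect to $\Re\langle\cdot,\cdot\rangle$ (by construction from $D_\nu>0$ and the symplectic $M$), so that $\Re\langle \me^{tZ}z,\breve{S}\me^{tZ}z\rangle = \Vert \breve{S}^{1/2}\me^{tZ}z\Vert^2$ with the real norm; inserting $\breve{S}^{-1/2}\breve{S}^{1/2}$ and using $\breve{S}^{1/2}\me^{tZ}\breve{S}^{-1/2} = \me^{t\breve{S}^{1/2}Z\breve{S}^{-1/2}}$ gives the claimed identity. No passage to the complexification is needed here because, unlike in \eqref{complex}, only the real part of the scalar product appears.
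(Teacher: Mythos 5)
Your proposal follows the paper's proof of this lemma exactly: reduce $\Vert T_t(\breve{x})\Vert_2^2$ to the KMS two-point functions $\tr(\rho^{1/2}W(-\me^{tZ}z_j)\rho^{1/2}W(\me^{tZ}z_k))$ via \eqref{eq:explWeyl}, evaluate those by symplectic diagonalization of $\rho$ into one-mode thermal states, and recombine with the $c_t$ prefactors through the Lyapunov identity $\Re\scalar{\me^{tZ}z}{S\me^{tZ}z}=\int_t^\infty\Re\scalar{\me^{sZ}z}{C\me^{sZ}z}\,\md s$ precisely as in the GNS case, with the same closing observation for \eqref{s_t-diag-KMS}. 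The one piece you assert rather than derive is the explicit formula $\tr(\rho^{1/2}W(z)\rho^{1/2}W(w))=\exp\{-\tfrac12(\Re\scalar{z}{Sz}+\Re\scalar{w}{Sw}+2\Re\scalar{z}{\breve{S}w})\}$ — this is where essentially all the work of the paper's Appendix C lies (exponential vectors, a $4d$-dimensional Gaussian integral, and a block-matrix inversion) — but the form you state, including the appearance of $\nu_j=\operatorname{csch}\coth^{-1}(\sigma_j)$, is exactly what that computation yields.
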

\begin{proof}
We defer the proof to Appendix C.
\end{proof}

The previous Lemma joins in a single statement results similar to those of Lemma \ref{lem:normSpectralGap} and Proposition \ref{prop:spectralGapLinearCombinations}, simply replacing all instances of $\mathbf{\tilde{S}}$ with $\breve{S}$.The matrices $\breve{S}$ and $S$ are diagonalized by the same symplectic transformation $M$ and we know the explicit relation between their symplectic eigenvalues $\sigma_j, \nu_j$. Furthermore, from the condition $\sigma_j >1$, we infer $\nu_j>0$ showing that $\breve{S}$ is still a positive and invertible matrix, which were the relevant properties that also $\mathbf{\tilde{S}}$ enjoyed. Letting $\breve{\omega}_0$ the greatest eigenvalue of $\breve{S}^\frac{1}{2}Z\breve{S}^{-\frac{1}{2}} + \breve{S}^{-\frac{1}{2}}Z^\sharp\breve{S}^\frac{1}{2}$ and $\breve{g}= -\breve{\omega}_0/2$, we can now proceed to compute the spectral gap, in total analogy to what we did for the GNS embedding.

With these new definition a new version for Proposition \ref{prop:spectralGapUpperBound} holds, yielding a lower bound for the spectral gap with the KMS embedding, namely $2\breve{g}$.
\begin{proposition} \label{prop:KMSembedding}
	Let $x \in \mathcal{W}$. It holds
	\[
		\norm{T_t(\breve{x})}_2^2 \leq \me^{-2\breve{g}t} \norm{\breve{x}}_2^2, \quad \forall t \geq 0.
	\]
\end{proposition}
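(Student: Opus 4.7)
The plan is to mimic the GNS argument line by line (Lemma \ref{lem:kernelPosDef} and Proposition \ref{prop:spectralGapUpperBound}), replacing $s_t$ by $\breve s_t$, $\tilde{\mathbf S}$ by $\breve S$, $\omega_0$ by $\breve\omega_0$, and working directly with real linear operators on $\mathbb C^d$ (equivalently $2d\times 2d$ real matrices on $\mathbb R^{2d}$) rather than their complexifications. Combining \eqref{eq:normSpectralGap2KMS} at times $t$ and $0$ with the definition \eqref{eq:brevex} of $\breve x$, the desired inequality $\|T_t(\breve x)\|_2^2\le\me^{-2\breve g t}\|\breve x\|_2^2$ is equivalent to positive definiteness, in the complex Hermitian kernel sense, of
\[
\breve K_{n,t}(z,w):=\me^{-2\breve g t}\bigl(\breve s_0(z,w)\bigr)^n-\bigl(\breve s_t(z,w)\bigr)^n,\qquad n\ge 1,\ z,w\in\mathbb C^d.
\]

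First I would check that the real-valued form $\breve s_t$ is itself a positive semidefinite Hermitian kernel on $\mathbb C^d$. Real-bilinearity is immediate from real linearity of $Z$ and $\breve S$; symmetry $\breve s_t(z,w)=\breve s_t(w,z)$ follows from self-adjointness of $\breve S$ with respect to $\Re\scalar{\cdot}{\cdot}$; and non-negativity on the diagonal follows from $\breve S\ge 0$, since by \eqref{s_t-diag-KMS} one has $\breve s_t(z,z)=\|\breve S^{1/2}\me^{tZ}z\|^2$. For complex coefficients $c_j=a_j+\mi b_j$, the imaginary cross-sum vanishes by symmetry and one obtains
\[
\sum_{j,k}\overline{c_j}c_k\,\breve s_t(z_j,z_k)=\breve s_t(\xi_a,\xi_a)+\breve s_t(\xi_b,\xi_b)\ge 0,\qquad \xi_a=\sum_j a_jz_j,\ \xi_b=\sum_j b_jz_j.
\]
By Schur's product theorem (Corollary 15.2 of \cite{Partha}), all pointwise powers and pointwise products of $\breve s_0$ and $\breve s_t$, together with their conic combinations, remain positive definite.

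The key step is the kernel $\breve K'_{1,t}(z,w):=\me^{-2\breve g t}\breve s_0(z,w)-\breve s_t(z,w)$. By \eqref{s_t-diag-KMS}, $\breve s_t(\xi,\xi)=\|\me^{tY}\breve S^{1/2}\xi\|^2$ with $Y:=\breve S^{1/2}Z\breve S^{-1/2}$, and the very definition of $\breve\omega_0$ as the top eigenvalue of $Y+Y^\sharp$ allows Lemma \ref{lem:ineqBound} (applied to $Y$ as a $2d\times 2d$ real matrix) to yield $\breve s_t(\xi,\xi)\le\me^{t\breve\omega_0}\breve s_0(\xi,\xi)=\me^{-2\breve g t}\breve s_0(\xi,\xi)$ for every $\xi\in\mathbb C^d$. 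The same $c_j=a_j+\mi b_j$ decomposition then gives positive definiteness of $\breve K'_{1,t}$. For $n\ge 2$, I would copy Lemma \ref{lem:kernelPosDef}, writing $\breve K'_{n,t}=\breve K'_{1,t}+(\me^{-2\breve g t/n}-\me^{-2\breve g t})\breve s_0$, and then, as in Proposition \ref{prop:spectralGapUpperBound}, telescope $a^n-b^n=(a-b)\sum_{m=0}^{n-1}a^{n-1-m}b^m$ at $a=\me^{-2\breve g t/n}\breve s_0(z,w)$, $b=\breve s_t(z,w)$ to factor $\breve K_{n,t}$ as the pointwise product of $\breve K'_{n,t}$ with a conic combination of products of powers of $\breve s_0$ and $\breve s_t$. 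Schur's theorem then closes the argument.

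The main obstacle I foresee is ensuring that the coefficient $\me^{-2\breve g t/n}-\me^{-2\breve g t}$ appearing in the $n\ge 2$ reduction is non-negative, that is $\breve g\ge 0$, equivalently $\breve\omega_0\le 0$. In the GNS case this was an immediate consequence of the Lyapunov identity $\mathbf Z^*\tilde{\mathbf S}+\tilde{\mathbf S}\mathbf Z=-\mathbf C_{\mathbf Z}$ with $\mathbf C_{\mathbf Z}>0$ (Proposition \ref{prop:omega0Properties}), but here $\breve S$ is obtained from $S$ by replacing, in the symplectic diagonalization $S=M^T D_\sigma M$, each symplectic eigenvalue $\sigma_j>1$ by $\sqrt{\sigma_j^2-1}$, and there is no longer a direct identity of the form $Z^\sharp\breve S+\breve S Z=-(\text{positive operator})$. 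I expect $Z^\sharp\breve S+\breve S Z\le 0$ to follow by combining the Lyapunov equation $Z^\sharp S+SZ=-C$ with the symplectic-eigenvalue relation between $S$ and $\breve S$, but making this explicit is the technical heart of the argument.
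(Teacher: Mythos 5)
Your plan is exactly the paper's proof: the paper disposes of this proposition in one sentence, saying that Lemma \ref{lem:kernelPosDef} and Proposition \ref{prop:spectralGapUpperBound} go through verbatim once $s_t$, $\tilde{\mathbf{S}}$, $\omega_0$ are replaced by $\breve{s}_t$, $\breve{S}$, $\breve{\omega}_0$, and your kernel factorization, the reduction of $\breve{K}'_{n,t}$ to $\breve{K}'_{1,t}$, and the Schur-product step are precisely that. Your handling of positive definiteness of the real symmetric kernel $\breve{s}_t$ via the decomposition $c_j=a_j+\mi b_j$ is correct and in fact cleaner than the paper's appeal to sesquilinearity.

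The obstacle you flag at the end is real and is the one point the paper leaves implicit: the coefficient $\me^{-2\breve{g}t/n}-\me^{-2\breve{g}t}$ is non-negative only if $\breve{\omega}_0\le 0$, and there is no Lyapunov identity expressing $Z^\sharp\breve{S}+\breve{S}Z$ as minus a manifestly positive operator (the paper buries the strict version of this in the hypothesis on $\ker(Z^\sharp\breve{S}+\breve{S}Z)$ of Theorem \ref{thm:spectralGapKMS}). You do not, however, need to extract it from the symplectic-eigenvalue relation between $S$ and $\breve{S}$: since
\[
\breve{S}^{\frac12}Z\breve{S}^{-\frac12}+\breve{S}^{-\frac12}Z^\sharp\breve{S}^{\frac12}
=\breve{S}^{-\frac12}\bigl(Z^\sharp\breve{S}+\breve{S}Z\bigr)\breve{S}^{-\frac12},
\]
it suffices to show $Z^\sharp\breve{S}+\breve{S}Z\le 0$, and this follows from contractivity of the KMS-embedded semigroup (the property of $i_{1/2,2}$ recalled in the introduction, from \cite{CaFa}). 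Indeed, applying \eqref{eq:normSpectralGap1KMS} to the single Weyl operator $x=W(z)$ gives
\[
\me^{\breve{s}_t(z,z)}-1=|\xi|^{-2}\norm{T_t(\breve{x})}_2^2\le |\xi|^{-2}\norm{\breve{x}}_2^2=\me^{\breve{s}_0(z,z)}-1,
\]
hence $\breve{s}_t(z,z)\le\breve{s}_0(z,z)$ for all $t\ge 0$, and differentiating at $t=0$ yields $\Re\scalar{z}{(Z^\sharp\breve{S}+\breve{S}Z)z}\le 0$ for every $z\in\mathbb{C}^d$. This gives $\breve{\omega}_0\le 0$, which is all the Schur-product step requires; the strict inequality $\breve{\omega}_0<0$ (so that $\breve{g}>0$ is a genuine gap) then comes from the kernel condition assumed in Theorem \ref{thm:spectralGapKMS}. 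With that observation inserted, your argument is complete and coincides with the paper's.
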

The proof follows precisely the lines of the one in Section \ref{sec:spectralGap} except that Lemma \ref{lem:kernelPosDef} and all the computations have to be performed with the $\breve{\cdot}$ counterparts of the quantities involved.

In the same way we can prove optimality of the spectral gap by following the proof of Proposition \ref{prop:spectralGapSharp} and using the $\breve{\cdot}$ counterparts of the quantities. We get the following result.
\begin{proposition}
	Let $g>\breve{g}$. There exists $\delta>0$ and $x \in \mathcal{W}$ such that
	\[
		\norm{T_t(\breve{x})}_2^2 > \me^{-2tg} \norm{\breve{x}}_2^2, \quad \forall t \in (0, \delta).
	\]
\end{proposition}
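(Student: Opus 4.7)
My plan is to mirror the proof of Proposition \ref{prop:spectralGapSharp}, replacing every tilded quantity with its $\breve{\cdot}$-counterpart. The key simplification is that, since $\breve{S}$, $Z$ and $Z^\sharp$ are real linear operators on $\mathbb{C}^d$ and formula \eqref{s_t-diag-KMS} already expresses $\breve{s}_t(z,z)$ via a single matrix exponential on $\mathbb{C}^d$, I expect to get away with a single Weyl operator $x_r=W(rz)$ instead of the combination $W(rz_1)+\mi W(rz_2)$ used in the GNS case.

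First, I would pick a nonzero $z\in\mathbb{C}^d$ such that $v:=\breve{S}^{1/2}z$ is an eigenvector of $A:=\breve{S}^{1/2}Z\breve{S}^{-1/2}+\breve{S}^{-1/2}Z^\sharp\breve{S}^{1/2}$ associated with its largest eigenvalue $\breve{\omega}_0$. Since $\breve{S}^{\pm 1/2}$ are self-adjoint with respect to $\Re\scalar{\cdot}{\cdot}$, so is $A$, so viewed on $\mathbb{R}^{2d}$ it is a symmetric real matrix and $\breve{\omega}_0$ is attained on a real eigenvector. Taking $x_r=W(rz)$ and applying \eqref{eq:brevex} together with Lemma \ref{lem:normSpectralGapKMS} for $l=1$, I would write
\[
\norm{\breve{x_r}}_2^2 = \me^{-r^2\Re\scalar{z}{Sz}}\bigl(\me^{r^2\breve{s}_0(z,z)}-1\bigr),\qquad \norm{T_t(\breve{x_r})}_2^2 = \me^{-r^2\Re\scalar{z}{Sz}}\bigl(\me^{r^2\breve{s}_t(z,z)}-1\bigr),
\]
and set $f(r):=\left.\frac{\md}{\md t}\bigl(\norm{T_t(\breve{x_r})}_2^2-\me^{-2tg}\norm{\breve{x_r}}_2^2\bigr)\right|_{t=0}$.

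The heart of the argument is the Taylor expansion of $f$ at $r=0$: one verifies $f(0)=f'(0)=0$ and $f''(0)=2\breve{s}_0(z,z)(\breve{\omega}_0+2g)$, the crucial input being the identity $\left.\frac{\md}{\md t}\breve{s}_t(z,z)\right|_{t=0}=\Re\scalar{v}{Av}=\breve{\omega}_0\,\breve{s}_0(z,z)$, obtained from formula \eqref{s_t-diag-KMS} and the eigenvector property. Since $g>\breve{g}=-\breve{\omega}_0/2$ gives $\breve{\omega}_0+2g>0$ and $\breve{s}_0(z,z)=\norm{v}^2>0$ (as $\breve{S}>0$ and $z\neq 0$), we get $f''(0)>0$, so $f(r)>0$ on some right neighbourhood $(0,\delta_0)$ of zero. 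Fixing any such $r$, the map $t\mapsto\norm{T_t(\breve{x_r})}_2^2-\me^{-2tg}\norm{\breve{x_r}}_2^2$ vanishes at $t=0$ with strictly positive derivative there and is therefore strictly positive on some interval $(0,\delta)$, giving the claim. The main obstacle is only bookkeeping --- verifying the $f''(0)$ formula and the eigenvector identity for $\dot{\breve{s}}_0(z,z)$ --- since the overall scheme is structurally identical to the GNS proof.
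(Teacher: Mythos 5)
Your proposal is correct and follows the same scheme as the paper, which simply instructs the reader to repeat the proof of Proposition \ref{prop:spectralGapSharp} with the $\breve{\cdot}$ counterparts: expand $f(r)=\left.\frac{\md}{\md t}\bigl(\norm{T_t(\breve{x_r})}_2^2-\me^{-2tg}\norm{\breve{x_r}}_2^2\bigr)\right|_{t=0}$ to second order in $r$ and use $f''(0)>0$. The one place where you genuinely deviate is the choice of test element: a single Weyl operator $W(rz)$ instead of the combination $W(rz_1)+\mi W(rz_2)$ used in the GNS proof. This simplification is legitimate, and your justification is the right one: in the GNS case $\tilde{\mathbf{S}}=\mathbf{S}+\mi\mathbf{J}$ forces one onto the complexification $\mathbb{C}^{2d}$, where the maximizing eigenvector of the Hermitian matrix $\tilde{\mathbf{S}}^{1/2}\mathbf{Z}\tilde{\mathbf{S}}^{-1/2}+\tilde{\mathbf{S}}^{-1/2}\mathbf{Z}^*\tilde{\mathbf{S}}^{1/2}$ is genuinely complex and must be split into two real pieces $z_1,z_2$, whereas in the KMS case $\breve{S}$, $Z$, $Z^\sharp$ are real linear, $A=\breve{S}^{1/2}Z\breve{S}^{-1/2}+\breve{S}^{-1/2}Z^\sharp\breve{S}^{1/2}$ is self-adjoint for $\Re\scalar{\cdot}{\cdot}$, hence a real symmetric matrix on $\mathbb{R}^{2d}$ whose top eigenvalue is attained at a real eigenvector, i.e.\ at some $v=\breve{S}^{1/2}z$ with $z\in\mathbb{C}^d$. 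Your identities $\left.\frac{\md}{\md t}\breve{s}_t(z,z)\right|_{t=0}=\Re\scalar{v}{Av}=\breve{\omega}_0\,\breve{s}_0(z,z)$ and $f''(0)=2\breve{s}_0(z,z)(\breve{\omega}_0+2g)$ check out, and $\breve{\omega}_0+2g>0$ together with $\breve{s}_0(z,z)=\norm{v}^2>0$ (from $\breve{S}>0$, $z\neq0$) closes the argument exactly as in the GNS case. This is consistent with the paper's remark that in the KMS setting no complexification is needed; what it buys you is a shorter bookkeeping step (a single $1\times1$ ``kernel'' term rather than a $2\times2$ sum), at no loss of generality.
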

Eventually, following again Theorem \ref{thm:spectralGap}, we arrive to the following result.

\begin{theorem} \label{thm:spectralGapKMS}
Let $\mathcal{T}$ be a gaussian QMS with a unique invariant faithful gaussian state and $\zeta=0$.
If $\ker Z^\sharp \breve{S} + \breve{S} Z \neq \{0\}$ then $\mathcal{T}$ has the spectral gap $\breve{g}$.
\end{theorem}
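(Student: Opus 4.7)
The plan is to follow the exact structure of the proof of Theorem~\ref{thm:spectralGap}, simply substituting the objects from the KMS embedding for their GNS counterparts. The heavy lifting has already been done in Proposition~\ref{prop:KMSembedding} and in the unnumbered optimality proposition immediately preceding the theorem; what remains is a density/continuity argument plus the assembly.

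First, Proposition~\ref{prop:KMSembedding} gives
\[
\norm{T_t(\breve{x})}_2^2 \le \me^{-2\breve{g} t}\norm{\breve{x}}_2^2, \qquad \forall\, x\in\mathcal{W},\ t\ge 0,
\]
so the inequality \eqref{def-g} with $g=\breve{g}$ holds on the subspace $V:=\{\breve{x}\,:\,x\in\mathcal{W}\}\subset\{\rho^{1/2}\}^\perp$. I would then show $V$ is dense in $\{\rho^{1/2}\}^\perp\subset\mathfrak{I}_2(\mathsf{h})$, by adapting the density argument from the end of the GNS proof. Namely, pick $y\in\mathfrak{I}_2(\mathsf{h})$ with $\langle\rho^{1/2},y\rangle_2=0$ and $\langle \breve{x},y\rangle_2=0$ for every $x\in\mathcal{W}$. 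Choosing $x=W(z)$ and using the explicit form \eqref{eq:brevex}, the second condition becomes
\[
0 = \operatorname{tr}\bigl(y^*\rho^{1/4}W(z)\rho^{1/4}\bigr) - \me^{-\tfrac{1}{2}\Re\langle z,Sz\rangle}\langle\rho^{1/2},y\rangle_2 = \operatorname{tr}\bigl(\rho^{1/4}y^*\rho^{1/4}\,W(z)\bigr),
\]
for every $z\in\mathbb{C}^d$, where I used the first orthogonality to kill the $\identity$-term. Since $\rho^{1/4}y^*\rho^{1/4}$ is trace class and the linear span of Weyl operators is weak-$*$ dense in $\mathcal{B}(\mathsf{h})$, one concludes $\rho^{1/4}y^*\rho^{1/4}=0$, and faithfulness of $\rho$ (hence injectivity of $\rho^{1/4}$ from both sides) yields $y=0$.

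With density in hand, I would extend the bound from $V$ to all of $\{\rho^{1/2}\}^\perp$ by continuity: $T_t$ is a contraction on $\mathfrak{I}_2(\mathsf{h})$ (by \cite{CaFa}), so both sides of \eqref{def-g} are continuous in the argument and the inequality propagates to the closure. This proves that $\breve{g}$ satisfies \eqref{def-g}. Optimality is then immediate from the unnumbered proposition preceding the theorem: for any $g>\breve{g}$ there is an explicit $\breve{x}$ with $x\in\mathcal{W}$ on which \eqref{def-g} fails in a right neighbourhood of $0$, so $\breve{g}$ is the largest admissible constant.

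The main obstacle is the density step, which is slightly more delicate than in the GNS case: because the KMS embedding sandwiches $\rho^{1/4}$ on both sides, one needs faithfulness of $\rho$ applied twice to pass from $\rho^{1/4}y^*\rho^{1/4}=0$ to $y=0$. Everything else is a verbatim transcription of the GNS argument, with the real-linear operator $\breve{S}$ playing the role that the complexification $\mathbf{\tilde{S}}$ played before; conveniently, no complexification is needed here because Lemma~\ref{lem:normSpectralGapKMS} already produces a real-linear formula for $\breve{s}_t$.
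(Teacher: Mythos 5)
Your proposal is correct and is essentially the paper's own argument: the paper proves Theorem~\ref{thm:spectralGapKMS} by simply invoking Proposition~\ref{prop:KMSembedding}, the unnumbered optimality proposition, and the scheme of Theorem~\ref{thm:spectralGap}, and the density step you spell out (reducing to $\operatorname{tr}(\rho^{1/4}y^*\rho^{1/4}W(z))=0$ and using weak-$*$ density of Weyl operators plus faithfulness of $\rho$) is the correct KMS transcription of the GNS argument. The only point worth adding is that the hypothesis on $\ker(Z^\sharp\breve{S}+\breve{S}Z)$ (which, as the surrounding text makes clear, should read $=\{0\}$) is what guarantees $\breve{g}>0$, which is needed both for the definition of the spectral gap and for the KMS analogue of Lemma~\ref{lem:kernelPosDef}.
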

As foreshadowed in the introduction to this section, the only difference between Theorems \ref{thm:spectralGap}
and \ref{thm:spectralGapKMS} is the need to replace $\mathbf{\tilde{S}}$ with $\breve{S}$. Even the condition on
$\ker Z^\sharp \breve{S} + \breve{S} Z$ is the translation of the condition $\CZ>0$ for the GNS embedding since
$-\CZ = \mathbf{Z^* \tilde{S}} + \mathbf{\tilde{S} Z}$.
 However the explicit conditions we had in Theorem \ref{thm:spectralGap} are now replaced by the implicit requirement of a unique invariant faithful gaussian state. This is due to the fact that the sufficient conditions we had in the GNS case, namely $\CZ >0$ and $Z$ stable, are no more necessary for the existence of a spectral gap (see the last comment of subsection \ref{subsect:KMSgap} below).

Another difference of note, is the lack of a counterpart of Proposition \ref{prop:omega0Properties} in the KMS embedding.
Indeed we lack a simple formula to connect the matrix $\breve{S}$ to the operators $Z$ and $C$, that describe the QMS.
For this very reason it is also difficult to compare the spectral gaps that we obtain
with respect to the two different embeddings. We believe that  $\breve{g}$ is bigger than $g$, however we do not have a definite proof.
The next example support this claim as the prototypical example of a gaussian QMS.

\subsection{one dimensional case: KMS embedding} \label{subsect:KMSgap}
In Section \ref{sec:oneDimCase}, we computed the spectral gap for the GNS embedding for a typical model with $d=1$.
Here, we consider the same gaussian QMS and we analyse the situation for the KMS embedding. Starting from the expression for $\mathbf{S}$ in \eqref{eq:Sonedim} we may symplectically diagonalize it writing $\mathbf{S} = \mathbf{M}^* \mathbf{D_\sigma} \mathbf{M}$ with
\[
	\mathbf{M} = \begin{bmatrix}
		\sqrt{\gamma^2 + \Omega^2} & 0 \\
		\frac{\kappa \gamma}{\left(\gamma^2 + \Omega^2 - \kappa^2 \right)\sqrt{\gamma^2 + \Omega^2}} & \frac{1}{\sqrt{\gamma^2 + \Omega^2}}
	\end{bmatrix}, \quad \mathbf{D_\sigma} = \begin{bmatrix}
		\sigma & 0 \\ 0 & \sigma
\end{bmatrix}, \quad  \sigma = \frac{\mu^2 + \lambda^2}{2\gamma} \sqrt{\frac{\gamma^2 + \Omega^2}{\gamma^2 + \Omega^2 - \kappa^2}}.
\]
Since $d=1$, both the matrix $\mathbf{D_\sigma}$ in the previous equation and $\mathbf{D_\nu}$ coming from Lemma \ref{lem:normSpectralGapKMS} are actually multiple of the identity therefore we can write
	\begin{equation} \label{eq:Sbreveonedim}
		\mathbf{\breve{S}} = \frac{\operatorname{csch}\coth^{-1} (\sigma)}{\sigma}\mathbf{S} = \frac{\operatorname{csch}\coth^{-1} (\sigma)}{\sigma} \mathbf{M}^* \left(\sigma \identity\right) \mathbf{M}.
	\end{equation}
	In particular, contrary to what happens in the GNS embedding with $\mathbf{\tilde{S}}$, the operator $\mathbf{\breve{S}}$ does not involve any operations that require the complexification for its definition. Therefore equation \eqref{eq:Sbreveonedim} can simply be formulated using real linear operators, i.e.
	\[
		\breve{S} = \frac{\operatorname{csch}\coth^{-1} (\sigma)}{\sigma}S.
	\]
	This allows us to also avoid consider complexification of the operators $Z,C$ so that the computations for the spectral gap can proceed, using just real linear operators, as follows
	\begin{align*}
		\breve{S}^\frac{1}{2}Z\breve{S}^{-\frac{1}{2}} + \breve{S}^{-\frac{1}{2}}Z^\sharp\breve{S}^\frac{1}{2} &= S^\frac{1}{2} Z S^{-\frac{1}{2}} + S^{-\frac{1}{2}} Z^\sharp S^{\frac{1}{2}} \\
		&= -S^{-\frac{1}{2}} C S^{-\frac{1}{2}},
	\end{align*}
	which is similar to $-CS^{-1}$. Notice that in the calculations we used once again the algebraic
characterization of the covariance matrix $S$, namely $SZ+Z^\sharp S=-C$.
	The eigenvalues of $CS^{-1}$ satisfy now the equation $r^2 - \tr(CS^{-1})r + \frac{\det C}{\det S} = 0$. Borrowing some of the calculations from Section \ref{sec:oneDimCase} we have
	\[
		\tr(CS^{-1}) = 4\gamma, \quad \det(CS^{-1}) = \frac{4\gamma^2(\gamma^2 + \Omega^2 - \kappa^2)}{\Omega^2 + \gamma^2}
	\]
	In particular $2\breve{g}$ is the smallest eigenvalue of $CS^{-1}$ and so
	\[
		\breve{g} = \gamma\left(1 - \frac{\modulo{\kappa}}{\sqrt{\Omega^2 + \gamma^2}}\right).
	\]
	Comparing this quantity with $g$ obtained in Section \ref{sec:oneDimCase} we have $\breve{g}>g$ if and only if
	\[
		\frac{\modulo{\kappa}}{\sqrt{\Omega^2 + \gamma^2}} < \frac{\modulo{\kappa}(\mu^2 + \lambda^2)}{2\sqrt{\mu^2\lambda^2(\Omega^2 + \gamma^2) + \gamma^2\kappa^2}}
	\]
	which simplifies down to (recalling that $\gamma=(\mu^2-\lambda^2)/2$)
	\[
		\gamma^2(\gamma^2+\Omega^2  - \kappa^2) >0.
	\]
	The inequality is now always satisfied since $\gamma > 0$ and $\Omega^2 + \gamma^2 - \kappa^2 > 0$ are necessary and sufficient conditions for the semigroup to have a gaussian invariant state (see \cite{AFP}).

In particular we can set $\lambda=0$, resulting in the non-invertibility of $\CZ$, and still have $\breve{g}>0$. This shows that the conditions for the existence of a unique faithful invariant gaussian state of Theorem \ref{thm:spectralGap}, namely $Z$ stable and $\CZ>0$, are only sufficient for the existence of a spectral gap in the KMS embedding, unlike in the GNS one.

\textbf{Acknowledgements}
The authors are members of GNAMPA-INdAM.
FF  acknowledges the support of the MUR grant ``Dipartimento di Eccellenza 2023--2027'' of Dipartimento di Matematica, Politecnico di Milano
and ``Centro Nazionale di ricerca in HPC, Big Data and Quantum Computing''.
DP, ES and VU have been supported by the MUR grant ``Dipartimento di Eccellenza 2023–2027'' of Dipartimento di Matematica, Universit\`a di Genova.

\appendix
\section{Calculations on $\CZ$}\label{sect:appA}
\begin{lemma} Matrices $\mathbf{Z}$ and $\CZ$ are given by \eqref{eq:mathbfZ} and \eqref{eq:mathbfCZ}.
\end{lemma}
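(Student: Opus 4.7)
The plan is a direct computation starting from the defining formulas \eqref{eq:Zdef} and \eqref{eq:Cdef} for the real linear operators $Z$ and $C$, and from the complexification recipe spelled out in Remark \ref{rem:complexify}. First I would read off the ``linear'' and ``conjugate linear'' parts of $Z$: namely $Z_1 = (U^T\overline U - V^T\overline V)/2 + \mi\Omega$ and $Z_2 = (U^TV - V^T U)/2 + \mi\kappa$, and similarly $C_1 = U^T\overline U + V^T\overline V$, $C_2 = U^TV + V^T U$ for $C$. Then, for a generic $d\times d$ complex matrix $A$, I would split $A = \Re A + \mi\,\Im A$ and plug $Z_1,Z_2,C_1,C_2$ into the generic block matrix of Remark \ref{rem:complexify}.

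The key identities that make the four blocks collapse into the form \eqref{eq:mathbfZ} and \eqref{eq:mathbfCZ} are of the form
\[
(U\pm\overline V)^*(U\pm\overline V) = U^*U + V^T\overline V \pm U^*\overline V \pm V^T U,
\]
together with the Hermiticity $\Omega=\Omega^*$ and the symmetry $\kappa=\kappa^T$, which force $\Re\Omega$ symmetric, $\Im\Omega$ antisymmetric, $\Re\kappa$ symmetric, $\Im\kappa$ symmetric, and consequently control the $\Re/\Im$ parts of $U^T V\pm V^T U$ and $U^T\overline U\pm V^T\overline V$. After collecting like terms, the blocks of $\mathbf Z$ coming from $(Z_1,Z_2)$ produce the first (CCR-diffusion) summand of \eqref{eq:mathbfZ}, while the contribution of $\mi\Omega$ and $\mi\kappa$ gives the second (Hamiltonian) summand.

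For $\CZ$ the idea is the same but requires one more step. After computing $\mathbf C$ via the same complexification recipe applied to $C$, I would compute $\mathbf Z^*\mathbf J + \mathbf J\mathbf Z$ blockwise using the explicit $\mathbf J=\bigl[\begin{smallmatrix}0&\unit\\-\unit&0\end{smallmatrix}\bigr]$; then form $\mathbf C - \mi(\mathbf Z^*\mathbf J + \mathbf J\mathbf Z)$ and check that the off-diagonal Hamiltonian pieces cancel (this is where $\Omega^*=\Omega$ and $\kappa^T=\kappa$ are essential), leaving only the four blocks
\[
(U+\overline V)^*(U+\overline V),\ \ -\mi(U+\overline V)^*(U-\overline V),\ \ \mi(U-\overline V)^*(U+\overline V),\ \ (U-\overline V)^*(U-\overline V),
\]
which is exactly \eqref{eq:mathbfCZ}. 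The positivity $\CZ\ge 0$ is then immediate from the factorization $\CZ = M^*M$ already used in Proposition \ref{prop:CandZ}.

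The main obstacle is purely bookkeeping: the matrices $U^T V$, $V^T U$, $U^T\overline U$, $V^T\overline V$ must be separated into real and imaginary parts and carefully paired against $\Omega$ and $\kappa$, and one must use the Hermitian/symmetric structure exactly where it appears to achieve the cancellations. There is no conceptual difficulty beyond that; once the substitution is made and the blocks are simplified using the $(U\pm\overline V)^*(U\pm\overline V)$ identities above, the equalities \eqref{eq:mathbfZ} and \eqref{eq:mathbfCZ} follow.
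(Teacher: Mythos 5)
Your plan is correct and follows essentially the same route as the paper: plug the linear/antilinear parts of $Z$ and $C$ into the complexification recipe of Remark \ref{rem:complexify}, regroup the blocks via the $(U\pm\conj{V})^*(U\pm\conj{V})$ identities, and observe that the $\Omega,\kappa$ contribution to $\mathbf{Z}^*\mathbf{J}+\mathbf{J}\mathbf{Z}$ drops out by Hermiticity of $\Omega$ and symmetry of $\kappa$ (the paper phrases this as the Hamiltonian part of $\mathbf{JZ}$ being selfadjoint, hence cancelling in $\mathbf{JZ}-(\mathbf{JZ})^*$). Nothing essential is missing.
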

\begin{proof}
	Using the formula of the complexification of a real linear operator we have
	\begin{align*}
		\mathbf{Z} &=  \frac{1}{2} \begin{bmatrix}
			\Re (U^T\conj{U} - V^T\conj{V} + U^TV - V^TU) & \Im (-U^T\conj{U} + V^T\conj{V} + U^TV - V^TU) \\
			\Im (U^T\conj{U} - V^T\conj{V} + U^TV - V^TU) & \Re (U^T\conj{U} - V^T\conj{V} - U^TV + V^TU)
			\end{bmatrix} \\
			&+ \begin{bmatrix}
			-\Im \left(\Omega + \kappa\right) & \Re\left(\kappa - \Omega \right) \\
			\Re\left(\Omega + \kappa\right) & \Im\left(\kappa - \Omega\right)
		\end{bmatrix} \\
		&= \frac{1}{2}\begin{bmatrix}
			\Re \left(\left( U - \conj{V}\right)^*\left(U +\conj{V}\right)\right) & \Im\left(\left( U - \conj{V}\right)^*\left(U -\conj{V}\right)\right) \\
			- \Im \left(\left( U + \conj{V}\right)^*\left(U +\conj{V}\right)\right) & \Re \left(\left( U + \conj{V}\right)^*\left(U -\conj{V}\right)\right)
		\end{bmatrix}  + \begin{bmatrix}
			-\Im \left(\Omega + \kappa\right) & \Re\left(\kappa - \Omega \right) \\
			\Re\left(\Omega + \kappa\right) & \Im\left(\kappa - \Omega\right)
		\end{bmatrix}.
	\end{align*}
	Similarly
	\begin{align*}
		\mathbf{C} &= \begin{bmatrix}
			\Re (\conj{U^*U + V^*V} + U^TV + V^TU) & -\Im (\conj{U^*U + V^*V} - U^TV - V^TU) \\
			\Im (\conj{U^*U + V^*V} + U^TV + V^TU) & \Re (\conj{U^*U + V^*V} - U^TV - V^TU)
		\end{bmatrix}\\
		&= \begin{bmatrix}
			\Re \left( \left(U + \conj{V}\right)^*\left(U + \conj{V}\right)\right) & \Im \left( \left(U + \conj{V}\right)^*\left(U - \conj{V}\right)\right)\\
			-\Im \left( \left(U - \conj{V}\right)^*\left(U + \conj{V}\right)\right) & \Re \left( \left(U - \conj{V}\right)^*\left(U - \conj{V}\right)\right)
		\end{bmatrix}.
	\end{align*}
	Moreover
	\[
		\mathbf{Z}^*\mathbf{J} + \mathbf{JZ} = \mathbf{JZ} - (\mathbf{JZ})^*
	\]
	and
	\begin{align*}
		\mathbf{JZ} &= \frac{1}{2}\begin{bmatrix}
			- \Im \left(\left( U + \conj{V}\right)^*\left(U +\conj{V}\right)\right) & \Re \left(\left( U + \conj{V}\right)^*\left(U -\conj{V}\right)\right) \\
			-\Re \left(\left( U - \conj{V}\right)^*\left(U +\conj{V}\right)\right) & -\Im\left(\left( U - \conj{V}\right)^*\left(U -\conj{V}\right)\right)
		\end{bmatrix} \\
		&+ \begin{bmatrix}
			\Re(\Omega + \kappa) & \Im (\kappa - \Omega) \\
			\Im (\kappa + \Omega) & \Re (\Omega - \kappa)
		\end{bmatrix}
	\end{align*}
	Now note that the first matrix in the expression for $\mathbf{JZ}$ is anti-selfadjoint while the second one is selfadjoint. In particular then
	\[
		\mathbf{Z}^*\mathbf{J} + \mathbf{JZ} = \begin{bmatrix}
			-\Im \left( \left(U + \conj{V}\right)^*\left(U + \conj{V}\right)\right) & \Re \left( \left(U + \conj{V}\right)^*\left(U - \conj{V}\right)\right)\\
			-\Re \left( \left(U - \conj{V}\right)^*\left(U + \conj{V}\right)\right) & -\Im \left( \left(U - \conj{V}\right)^*\left(U - \conj{V}\right)\right)
		\end{bmatrix}.
	\]
	Using the definition of $\CZ = \mathbf{C} -\mi \left( \mathbf{Z}^*\mathbf{J} + \mathbf{JZ} \right)$ we get the desired result.
\end{proof}

\section{Proof of Proposition \ref{prop:gapImpliesCZinvertible}}
This appendix is devoted to the proof of Proposition \ref{prop:gapImpliesCZinvertible}
Let $(z_p,z_q) \in \ker \CZ$ with $z_p, z_q \in \mathbb{C}^d$ and consider
$x =\sum_{j=1}^d(z_{p,j} p_j - z_{q,j} q_j) \rho^{1/2} $. Note that
\begin{align*}
		\sum_{j=1}^d \left(z_{p,j} p_j - z_{q,j} q_j \right) &= \sum_{j=1}^d\left( \Re z_{p,j} p_j - \Re z_{q,j} q_j \right) +\mi \sum_{j=1}^d\left( \Im z_{p,j} p_j - \Im z_{q,j}q_j \right) \\
		&= \sqrt{2} \left( p(z_1) + \mi p(z_2) \right)
	\end{align*}
where $\sqrt{2}z_1 = \Re z_p + \mi \Re z_q$, $\sqrt{2} z_2 = \Im z_p + \mi \Im z_q$.
Which motivates the introduction of the following notation
\[
	\tilde{W}(z_p, z_q) = W(z_1) + \mi W(z_2)
\]
and the ratios of increments
\begin{equation} \label{eq:ratios}
	R_z(r) = \frac{W(rz) - \identity}{-\mi r}, \quad \tilde{R}_{z_p, z_q} (r) = \frac{\tilde{W}(rz_p, rz_q) - (1+\mi)\identity}{-\mi r} = R_{z_1}(r) + \mi R_{z_2}(r).
\end{equation}
In particular it holds
\[	
	\sum_{j=1}^d \left(z_{p,j} p_j - z_{q,j} q_j \right) = \lim_{r \to 0^+} \frac{ W(r z_1) + \mi W( r z_2) - (1 + \mi ) \identity}{-\mi r} = \lim_{r \to 0^+} \tilde{R}_{z_p,z_q}(r)
\]
where the limits holds in the strong operator topology and on a suitable dense domain.
We can now prove that the existence of a spectral gap implies $\CZ > 0$, starting from a lemma.
\begin{lemma} \label{lem:derivative0DoppioWeyl}
	Let $\mathcal{T}$ be a gaussian QMS with an invariant state $\rho$. For every $z,w \in \mathbb{C}^d$ we have
	\[
		\left.\frac{\md}{\md t} \tr \left( \rho \mathcal{T}_t (R_z(r))^* \mathcal{T}_t (
	R_w(r)) \right)\right|_{t=0} = -\scalar{\begin{pmatrix}
			\Re z \\ \Im z
		\end{pmatrix}}{ \mathbf{C_Z} \begin{pmatrix}
			\Re w \\ \Im w
		\end{pmatrix}} \tr \left( \rho W(-rz) W(rw) \right)
	\]
\end{lemma}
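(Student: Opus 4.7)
The plan is to expand the trace into contributions from Weyl operators, use the explicit action of the semigroup (eq:explWeyl), and then differentiate at $t=0$ using the Lyapunov equation $\mathbf{Z}^*\mathbf{S}+\mathbf{S}\mathbf{Z}=-\mathbf{C}$ to convert the resulting expression into $\mathbf{C_Z}$.

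First, I would use that $\mathcal{T}_t$ is a $*$-map and $W(rz)^* = W(-rz)$ to write
$\mathcal{T}_t(R_z(r))^* = (\mi r)^{-1}(\mathcal{T}_t(W(-rz))-\identity)$, and then expand the product so that
\[
\tr(\rho\,\mathcal{T}_t(R_z(r))^*\mathcal{T}_t(R_w(r))) = \frac{1}{r^2}\tr(\rho\,\mathcal{T}_t(W(-rz))\mathcal{T}_t(W(rw))) + (\text{three $t$-independent terms}).
\]
The three remaining terms become constants in $t$ because $\rho$ is invariant and so $\tr(\rho\,\mathcal{T}_t(W(\pm rz)))=\hat\rho(\pm rz)$. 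Hence only $f(t):=r^{-2}\tr(\rho\,\mathcal{T}_t(W(-rz))\mathcal{T}_t(W(rw)))$ contributes to the derivative.

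Next, I would use formula \eqref{eq:explWeyl} (with $\zeta=0$, and noting that $c_t(\cdot)$ is even) together with the Weyl CCR \eqref{eq:WeylCCR} to obtain
\[
\mathcal{T}_t(W(-rz))\mathcal{T}_t(W(rw)) = c_t(rz)c_t(rw)\,\me^{\mi r^2 \Im\scalar{\me^{tZ}z}{\me^{tZ}w}}\,W\!\bigl(r\me^{tZ}(w-z)\bigr),
\]
and then take the trace against $\rho$, using $\hat\rho(u)=\exp(-\tfrac12\Re\scalar{u}{Su})$. Writing $\phi(t):=\log(r^2 f(t))$ this gives a sum of four explicit terms, and the key point is that $f'(0) = f(0)\phi'(0) = r^{-2}\tr(\rho W(-rz)W(rw))\cdot \phi'(0)$.

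The main task is then the differentiation of $\phi(t)$ at $t=0$, which I would do in the real picture $\tilde z=(\Re z,\Im z)^T$, $\tilde w=(\Re w,\Im w)^T$ on $\mathbb R^{2d}$:
\begin{align*}
\tfrac{d}{dt}\log c_t(rz)\big|_{t=0} &= -\tfrac{r^2}{2}\scalar{\tilde z}{\mathbf{C}\tilde z},\quad \tfrac{d}{dt}\log c_t(rw)\big|_{t=0} = -\tfrac{r^2}{2}\scalar{\tilde w}{\mathbf{C}\tilde w},\\
\tfrac{d}{dt}\Im\scalar{\me^{tZ}z}{\me^{tZ}w}\big|_{t=0} &= \scalar{\tilde z}{(\mathbf Z^*\mathbf J + \mathbf J \mathbf Z)\tilde w},\\
\tfrac{d}{dt}\!\left(-\tfrac{r^2}{2}\Re\scalar{\me^{tZ}(w-z)}{S\me^{tZ}(w-z)}\right)\!\Big|_{t=0} &= \tfrac{r^2}{2}\scalar{\tilde w-\tilde z}{\mathbf{C}(\tilde w-\tilde z)},
\end{align*}
where the last identity uses the Lyapunov relation $\mathbf Z^*\mathbf S+\mathbf S\mathbf Z=-\mathbf C$ from Theorem~\ref{thm:invariantState}. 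Expanding the last bracket and summing all four contributions, the $\scalar{\tilde z}{\mathbf C\tilde z}$ and $\scalar{\tilde w}{\mathbf C\tilde w}$ parts cancel, leaving $-r^2\scalar{\tilde z}{\mathbf{C}\tilde w}$; together with the imaginary term $\mi r^2\scalar{\tilde z}{(\mathbf Z^*\mathbf J+\mathbf J\mathbf Z)\tilde w}$ this yields $\phi'(0) = -r^2\scalar{\tilde z}{\mathbf{C_Z}\tilde w}$ by the definition $\mathbf{C_Z}=\mathbf{C}-\mi(\mathbf{Z}^*\mathbf{J}+\mathbf{J}\mathbf{Z})$.

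Finally, multiplying by $f(0)=r^{-2}\tr(\rho W(-rz)W(rw))$ cancels the $r^2$ and gives exactly the claimed formula. The only delicate step is the fourth line above, where one must pass from the real-bilinear form $\Re\scalar{\cdot}{S\cdot}$ on $\mathbb C^d$ to the genuine bilinear form $\scalar{\cdot}{\mathbf S\cdot}$ on $\mathbb R^{2d}$ before differentiating, so that $\mathbf Z^*$ (rather than the ill-defined complex adjoint of the real-linear $Z$) appears and the Lyapunov relation can be applied; this is the main bookkeeping obstacle, handled by the complexification machinery recorded in Remark~\ref{rem:complexify}.
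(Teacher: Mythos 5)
Your proposal is correct, and every step checks out (the cancellation of the $\scalar{\tilde z}{\mathbf C\tilde z}$ and $\scalar{\tilde w}{\mathbf C\tilde w}$ terms and the emergence of $-r^2\scalar{\tilde z}{\CZ\tilde w}$ are exactly right), but the final step is handled by a genuinely different mechanism than in the paper. You evaluate $\tr\bigl(\rho\, W(r\me^{tZ}(w-z))\bigr)$ explicitly via the gaussian characteristic function $\hat\rho(u)=\exp(-\tfrac12\Re\scalar{u}{Su})$ and then invoke the Lyapunov relation $\mathbf Z^*\mathbf S+\mathbf S\mathbf Z=-\mathbf C$ to turn the $t$-derivative of the $S$-quadratic form into a $\mathbf C$-quadratic form. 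The paper instead notes that $\alpha_t(z)\alpha_t(w)W(\me^{tZ}z)W(\me^{tZ}w)$ can be rewritten as $\exp\bigl\{\int_0^t\rescalar{\me^{sZ}z}{C\me^{sZ}w}\,\md s-\mi\Im\scalar{\me^{tZ}z}{\me^{tZ}w}\bigr\}\,\mathcal T_t(W(z+w))$, so that by invariance the trace factor $\tr(\rho\,\mathcal T_t(W(z+w)))=\tr(\rho\,W(z+w))$ is constant in $t$ and only the scalar prefactor needs differentiating; the cross term $\Re\scalar{z}{Cw}$ then appears directly, with no reference to $S$ at all. The paper's route is slightly more general — it proves the lemma for an arbitrary invariant state, as stated, whereas yours requires $\rho$ to be a mean-zero gaussian state whose covariance solves the Lyapunov equation (true in the application, but a restriction of the hypothesis) — and it sidesteps the logarithmic-derivative bookkeeping. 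Your route has the virtue of making the definition $\CZ=\mathbf C-\mi(\mathbf Z^*\mathbf J+\mathbf J\mathbf Z)$ appear as the natural output of the Lyapunov equation, and your closing remark about passing to the $\mathbb R^{2d}$ picture before differentiating (so that $\mathbf Z^*$ is meaningful) correctly identifies the one genuinely delicate point.
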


\begin{proof}
	Note at first that, from \eqref{eq:explWeyl},
	\begin{align*}
		\mathcal{T}_t (W(z)) \mathcal{T}_t (W(w)) &= \alpha_t(z)\alpha_t(w) W(\me^{tZ}z) W(\me^{tZ}w) \\
		&= \alpha_t(z) \alpha_t(w) \me^{-\mi \Im\scalar{\me^{tZ}z}{\me^{tZ}w}} W(\me^{tZ}(z+w)) \\
		&= \exp\left\{ \int_0^t \rescalar{\me^{sZ}z}{C\me^{sZ}w}\md s- \mi \Im \scalar{\me^{tZ}z}{\me^{tZ}w}\right\} \mathcal{T}_t( W(z + w)),
	\end{align*}
where we set
	$$\alpha_t(z)=\exp\left(-\frac{1}{2}\int_0^t \Re\scalar{\mathrm{e}^{sZ}z}{
  C \mathrm{e}^{sZ}z}\mathrm{d}s\right).$$
	Tracing both sides of the previous equation against $\rho$ and taking the derivative at $t=0$ one gets
	\begin{align*}
		\left.\frac{\md}{\md t} \tr \left( \rho \mathcal{T}_t (W(z)) \mathcal{T}_t (W(w)) \right)\right|_{t=0} &= \left(\rescalar{z}{Cw}- \mi \Im \scalar{Zz}{w} -\mi \Im \scalar{z}{Zw}\right)\\
		&\cdot \me^{-\mi \Im\scalar{z}{w}} \tr \left( \rho W(z + w) \right) \\
		& = \scalar{\begin{pmatrix}
			\Re z \\ \Im z
		\end{pmatrix}}{\mathbf{C_Z}\begin{pmatrix}
			\Re w \\ \Im w
		\end{pmatrix}} \tr \left( \rho W(z) W(w) \right).
	\end{align*}
	where the first equality is due to explicit computations and the invariant property for $\rho$, while the second one is due to \eqref{eq:WeylCCR} and \eqref{legame-prod-scal}. Eventually
	\begin{align*}
		\tr \left( \rho \mathcal{T}_t (R_z(r))^* \mathcal{T}_t (
	R_w(r))\right) &= \frac{1}{r^2}\tr \left( \rho \mathcal{T}_t (W(-rz)-\identity) \mathcal{T}_t (W(rw) - \identity ) \right) \\
	&= \frac{1}{r^2} \tr \left( \rho \mathcal{T}_t (W(-rz)) \mathcal{T}_t (W(rw))\right) \\
	&+\frac{1}{r^2} \left( -\tr (\rho W(-rz)) - \tr (\rho W(rw) ) + \tr(\rho) \right)
	\end{align*}
	Therefore, when taking the derivative at $t=0$ only the first summand does not vanish and the result is proven.
\end{proof}

\begin{lemma} \label{lem:charFunctionPolynomials}
	Let $\mathcal{T}$ be a gaussian QMS with a faithful gaussian invariant state $\rho$. Then for every $z, w \in \mathbb{C}^d$ it holds
	\[
		\lim_{r \to 0^+}\norm{\tilde{R}_{z_p,z_q}(r)\rho^{1/2} }_2^2 >0
	\]
\end{lemma}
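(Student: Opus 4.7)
The strategy is to evaluate the limit explicitly, using that for a faithful zero-mean gaussian state the characteristic function
$\tr(\rho W(u)) = \me^{-\frac{1}{2}\Re\scalar{u}{Su}}$
is an explicit analytic function of $u$. Writing $\tilde R(r) = R_{z_1}(r) + \mi R_{z_2}(r)$ and setting $c_1=1, c_2=\mi$, I would expand
\[
\norm{\tilde{R}_{z_p,z_q}(r)\rho^{1/2}}_2^2 = \tr(\rho\,\tilde R(r)^*\tilde R(r)) = \sum_{j,k=1}^2 \overline{c}_j c_k\,\tr(\rho R_{z_j}(r)^* R_{z_k}(r)),
\]
so the task reduces to the four cross terms $\tr(\rho R_z(r)^* R_w(r))$ for $z,w\in\mathbb{C}^d$.

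Using $R_z(r)^*=(W(-rz)-\identity)/(\mi r)$ together with the Weyl CCR \eqref{eq:WeylCCR} and the explicit formula above,
\[
\tr(\rho R_z(r)^* R_w(r)) = \frac{1}{r^2}\Bigl[\me^{\mi r^2 \Im\scalar{z}{w}}\me^{-\frac{r^2}{2}\Re\scalar{w-z}{S(w-z)}} - \me^{-\frac{r^2}{2}\Re\scalar{z}{Sz}} - \me^{-\frac{r^2}{2}\Re\scalar{w}{Sw}} + 1\Bigr].
\]
A Taylor expansion in $r$ makes the constant terms cancel, and the $O(r^2)$ coefficients collapse (using that $S$ is self-adjoint with respect to $\Re\scalar{\cdot}{\cdot}$) to give
\[
\lim_{r\to 0^+}\tr(\rho R_z(r)^* R_w(r)) = \Re\scalar{z}{Sw} + \mi\Im\scalar{z}{w}.
\]
By identity \eqref{legame-prod-scal} applied with $A=S$, this equals $\scalar{(\Re z, \Im z)^T}{\tilde{\mathbf S}(\Re w, \Im w)^T}$, where $\tilde{\mathbf S}=\mathbf S+\mi\mathbf J$.

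Assembling the four contributions and using sesquilinearity on $\mathbb{C}^{2d}$,
\[
\lim_{r\to 0^+}\norm{\tilde{R}_{z_p,z_q}(r)\rho^{1/2}}_2^2 = \scalar{u_1+\mi u_2}{\tilde{\mathbf S}(u_1+\mi u_2)},
\]
with $u_j=(\Re z_j, \Im z_j)^T\in\mathbb{R}^{2d}$. Substituting $\sqrt{2}\,z_1=\Re z_p+\mi\Re z_q$ and $\sqrt{2}\,z_2=\Im z_p+\mi\Im z_q$, one verifies that $\sqrt{2}(u_1+\mi u_2)=(z_p, z_q)^T$, so the limit equals $\tfrac{1}{2}\scalar{(z_p, z_q)^T}{\tilde{\mathbf S}(z_p, z_q)^T}$. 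Faithfulness of $\rho$ is equivalent to $\tilde{\mathbf S}>0$ (as recalled in the proof of Theorem \ref{thm:invariantState}), which yields strict positivity of the limit for every $(z_p,z_q)\neq 0$.

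The step I view as the main obstacle is not an analytic one but a bookkeeping one: after the $1/r^2$ rescaling, one must ensure the constant and linear-in-$r$ contributions from the four Weyl traces cancel identically, and that the surviving $O(r^2)$ part is recognised as the $\tilde{\mathbf S}$-quadratic form via \eqref{legame-prod-scal}. Once this identification is made, the positivity follows immediately from faithfulness; the interchange of the $r\to 0^+$ limit with the trace presents no difficulty because the relevant expressions are entire in $r$.
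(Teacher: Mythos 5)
Your proposal is correct and follows essentially the same route as the paper's proof: the same four-term sesquilinear expansion over $R_{z_1},R_{z_2}$, the same computation of $\lim_{r\to0^+}\tr(\rho R_z(r)^*R_w(r))=\Re\scalar{z}{Sw}+\mi\Im\scalar{z}{w}$ from the Weyl CCR and the characteristic function, the same identification with the $\mathbf{S}+\mi\mathbf{J}$ quadratic form via \eqref{legame-prod-scal}, and the same appeal to $\mathbf{S}+\mi\mathbf{J}>0$ from faithfulness. The only cosmetic difference is that you Taylor-expand the explicit Gaussian exponentials where the paper phrases the limit through difference quotients of $\hat\rho$; the computation is identical.
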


\begin{proof}
Note at first that, for $z, w \in \mathbb{C}^d$
\begin{align*}
	\tr( \rho R_z(r)^* R_w(r)) &= \frac{1}{r^2}\tr\left(\rho (W(-rz) - \identity)(W(rw) - \identity)\right) \\
	&=\frac{\me^{\mi r^2\Im\scalar{z}{w}}\hat{\rho}(-r(z-w)) - 1}{r^2} - \frac{\hat{\rho}(-rz) - 1}{r^2} - \frac{\hat{\rho}(rw) - 1}{r^2}
\end{align*}
and the limit as $r \to 0^+$ exists and is finite since the characteristic function of $\rho$ is differentiable. In particular it holds
\begin{align*}
	\lim_{r \to 0^+} \tr( \rho R_z(r)^* R_w(r) ) &= \left( \mi \Im\scalar{z}{w} - \frac{1}{2}\Re\scalar{(z-w)}{S(z-w)}\right) \\
	&+ \frac{1}{2}\Re\scalar{z}{Sz} + \frac{1}{2}\Re\scalar{w}{Sw} \\
	&= \rescalar{z}{Sw} + \mi \Im\scalar{z}{w} = \scalar{\begin{bmatrix}
	 \Re z \\ \Im z
	\end{bmatrix}}{ (\mathbf{S} + \mi \mathbf{J}) \begin{bmatrix}
		\Re w \\ \Im w
	\end{bmatrix}}
\end{align*}
thanks to \eqref{legame-prod-scal}.
To conclude that the desired limit exists and is finite it sufficient to note that, using definition \eqref{eq:ratios},
\begin{align*}
	\norm{\tilde{R}_{z_p,z_q}(r) \rho^{1/2} }_2^2 &= \tr \left( \rho \left(\tilde{R}_{z_p,z_q}(r)\right)^*\tilde{R}_{z_p,z_q}(r) \right)  \\
	&= \tr\left( \rho R_{z_1}(r)^* R_{z_1}(r)\right) + \tr\left( \rho R_{z_2}(r)^* R_{z_2}(r)\right) \\
	& +\mi\, \tr\left( \rho R_{z_1}(r)^* R_{z_2}(r)\right) - \mi\, \tr\left( \rho R_{z_2}(r)^* R_{z_1}(r)\right)
\end{align*}
and taking the limit as $r \to 0^+$
\begin{align*}
	\lim_{r \to 0^+} \norm{\tilde{R}_{z_p,z_q}(r)\rho^{1/2}  }_2^2 &= \scalar{ \begin{bmatrix}
		\Re z_1 + \mi \Re z_2 \\ \Im z_1 + \mi \Im z_2
	\end{bmatrix}}{(\mathbf{S} + \mi \mathbf{J}) \begin{bmatrix}
		\Re z_1 + \mi \Re z_2 \\ \Im z_1 + \mi \Im z_2
	\end{bmatrix}} \\
	&= \frac{1}{2}\scalar{ \begin{bmatrix}
		z_p \\ z_q
	\end{bmatrix}}{(\mathbf{S} + \mi \mathbf{J}) \begin{bmatrix}
		z_p \\ z_q
	\end{bmatrix}} >0
\end{align*}
Where positivity of the last quantity follows from the fact that $\mathbf{S} + \mi \mathbf{J} = \conj{\mathbf{S} - \mi \mathbf{J}} >0$.
\end{proof}

\smallskip
We are now ready to prove Proposition \ref{prop:gapImpliesCZinvertible}.

\begin{proof}[Proof of Proposition \ref{prop:gapImpliesCZinvertible}]
	Let $[z_p , z_q]^T$ be an eigenvector for $\CZ$ associated with the eigenvalue $0$. For every $g,r >0$ consider the function
	\[
		f_{g,r}(t) = \norm{\mathcal{T}_t \left(\tilde{R}_{z_p,z_q}(r) \right) \rho^{1/2} }_2^2 - \me^{-2gt} \norm{\tilde{R}_{z_p,z_q}(r) \rho^{1/2}}_2^2
	\]
	Clearly $f_{g,r}(0) = 0$ and we will show that for every $g>0$ there exists $r_g > 0$ such that $f^\prime_{g,r_g}(0) >0$. In this way, at least for $t>0$ small enough, $f_{g,r_g}(t) > 0$ contradicting the existence of a spectral gap.
	The derivative at $t=0$ of $f_{g,r}(t)$ is
	\[
		f^\prime_{g,r}(0) = \left. \frac{\md}{\md t} \norm{\mathcal{T}_t \left(\tilde{R}_{z_p,z_q} (r)\right) \rho^{1/2}}_2^2\right|_{t=0} +2g \norm{\tilde{R}_{z_p,z_q}(r)\rho^{1/2} }_2^2.
	\]
	The first summand can be further expanded using
	\begin{align*}
		\norm{\mathcal{T}_t \left(\tilde{R}_{z_p,z_q}(r) \right) \rho^{1/2}}_2^2 & =\tr \left( \rho \mathcal{T}_t (\tilde{R}_{z_p,z_q}(r))^* \mathcal{T}_t (\tilde{R}_{z_p,z_q}(r))\right) \\
		&= \tr \left[ \rho \mathcal{T}_t (R_{z_1}(r))^*\mathcal{T}_t (R_{z_1}(r)))\right] + \tr \left[ \rho \mathcal{T}_t (R_{z_2}(r))^*\mathcal{T}_t (R_{z_2}(r))\right] \\
		& +\mi \tr \left[ \rho \mathcal{T}_t (R_{z_1}(r))^*\mathcal{T}_t (R_{z_2}(r))\right] - \mi \tr \left[ \rho \mathcal{T}_t (R_{z_2}(r))^*\mathcal{T}_t (R_{z_1}(r))\right]
	\end{align*}
	and then taking the derivative at $t=0$, using Lemma \ref{lem:derivative0DoppioWeyl}
	\begin{align*}
		\left.\frac{\md}{\md t} \norm{\mathcal{T}_t \left(\tilde{R}_{z_p,z_q}(r) \right) \rho^{1/2} }_2^2 \right|_{t=0} & =  - \scalar{\begin{pmatrix}
			\Re z_1 \\ \Im z_1
		\end{pmatrix}}{\mathbf{C_Z}\begin{pmatrix}
			\Re z_1 \\ \Im z_1
		\end{pmatrix}} \tr \left( \rho W(-rz_1) W(rz_1) \right)\\
		&- \scalar{\begin{pmatrix}
			\Re z_2 \\ \Im z_2
		\end{pmatrix}}{\mathbf{C_Z} \begin{pmatrix}
			\Re z_2 \\ \Im z_2
		\end{pmatrix}} \tr \left( \rho W(-rz_2) W(rz_2) \right) \\
		& + \mi \scalar{\begin{pmatrix}
			\Re z_2 \\ \Im z_2
		\end{pmatrix}}{\mathbf{C_Z}\begin{pmatrix}
			\Re z_1 \\ \Im z_1
		\end{pmatrix}} \tr \left(\rho W(-rz_2) W(rz_1) \right) \\
		& -\mi \scalar{\begin{pmatrix}
			\Re z_1 \\ \Im z_1
		\end{pmatrix}}{\mathbf{C_Z}\begin{pmatrix}
			\Re z_2 \\ \Im z_2
		\end{pmatrix}} \tr \left( \rho W(-rz_1) W(rz_2) \right).
	\end{align*}
	Letting $r \to 0^+$ we get
	\[
		\lim_{r \to 0^+} \left.\frac{\md}{\md t} \norm{ \mathcal{T}_t \left(\tilde{R}_{z_p,z_q}(r)\right) \rho^{1/2} }_2^2 \right|_{t=0} = \scalar{\begin{pmatrix}
			z_p \\ z_q
		\end{pmatrix}}{\CZ \begin{pmatrix}
			z_p \\ z_q
		\end{pmatrix}} = 0.
	\]
	Eventually, using Lemma \ref{lem:charFunctionPolynomials}, we get
	\begin{align*}
		\lim_{r \to 0^+} f^\prime_{g,r}(0) &= \lim_{r \to 0^+}\norm{\tilde{R}_{z_p,z_q}(r)\rho^{1/2}  }_2^2 > 0 .
	\end{align*}
\end{proof}

\section{Proof of Lemma \ref{lem:normSpectralGapKMS}} \label{sec:appC}
This appendix contains the proof of Lemma \ref{lem:normSpectralGapKMS}. The first part revolves around the computation of the quantity
\[
	\tr (\rho^\frac{1}{2}W(z)\rho^\frac{1}{2}W(w)),
\]
where $z,w \in \mathbb{C}^d$ and $\rho = \rho_{(0,S)}$ is a faithful gaussian state.
The starting point is to simplify the expression for $\rho$ as one does for gaussian vectors in classical probability. Let us start by introducing some notation.
For $c \in \mathbb{R}^d$, we denote with $D_c$ the real linear operator defined by
\[
	D_c z = \operatorname{diag}(c_1, \dots, c_d) z, \qquad \mathbf{D_c} \begin{bmatrix}
	x \\ y
	\end{bmatrix} = \operatorname{diag}(c_1, \dots, c_d, c_1, \dots, c_d) \begin{bmatrix}
	x \\ y
	\end{bmatrix}
\]
From \cite{KRPSymm} every faithful gaussian state $\rho = \rho_{(0,S)}$ has the form
\[
	\rho = \Gamma(M)^{-1} \prod_j (1- \me^{-s_j}) \me^{-\sum_j s_j a_j^\dagger a_j} \Gamma (M),
\]
where $M$ is a symplectic, or Bogoliubov,  transformation (i.e. $M^T JM = J$) such that ${M}^TD_\sigma{M} = S$, having set $\sigma \in \mathbb{R}^{d}$ and $\sigma_j = \coth (s_j/2)$. Eventually $\Gamma (M)$ is a unitary operator satisfying
\[
	\Gamma (M) W(z) \Gamma (M)^{-1} = W(Mz).
\]
The previous discussion allows us to have an expression for $\rho^\frac{1}{2}$ that one can work with, explicitly
\begin{equation} \label{eq:rhoCanonicalDecomposition}
	\rho^{-\frac{1}{2}} = \Gamma(M)^{-1} \prod_j (1- \me^{-s_j})^{\frac{1}{2}} \me^{-\sum_j s_j/2 a_j^\dagger a_j} \Gamma (M).
\end{equation}
Before moving to the actual calculations let us recall first \emph{exponential vectors} in $\mathsf{h}$. For any  $f \in \mathbb{C}^d$ we have
\[
	e(f) = \me^{\frac{\modulo{z}^2}{2}}W(f)e(0, \dots, 0) = \sum_{n \geq 0}\sum_{\alpha \in \mathbb{N}^d, \modulo{\alpha} = n} \frac{f_1^{\alpha_1} \cdot \dots \cdot f_d^{\alpha_d}}{\alpha_1! \dots \alpha_d!} e(\alpha_1, \dots, \alpha_d).
\]
In particular for any $z \in \mathbb{C}^d$ it holds
\[
	W(z)e(f) = \exp\left\{ -\frac{\modulo{z}^2}{2} -\scalar{z}{f}\right\} e(z+f).
\]
Exponential vectors are a total set in $\mathsf{h}$ and provide an alternative way to computing traces, as highlighted in the following Lemma, whose proof can be found in \cite{KRPcosa}.
\begin{lemma} \label{lem:identityDecomposition}
For $z = x+ \mi y$
\[
	\identity = \frac{1}{\pi^d} \int_{\mathbb{R}^{2d}} \me^{-\modulo{z}^2} \outerp{e(z)}{e(z)} \md x \md y,
\]
also
\[
	\tr(\rho) = \frac{1}{\pi^d} \int_{\mathbb{R}^{2d}} \me^{-\modulo{z}^2} \scalare{e(z)}{\rho e(z)} \md x \md y
\]
\end{lemma}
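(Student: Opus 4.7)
The plan is to verify both identities by testing them against the total set of exponential vectors, reducing the operator-valued statement to an elementary Gaussian integral on $\mathbb{R}^{2d}$.

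First, I would recall that $\{e(f) : f \in \mathbb{C}^d\}$ is a total set in $\mathsf{h}$ and that $\scalare{e(f)}{e(g)} = \me^{\scalare{f}{g}}$ (with $\scalare{f}{g} = \sum_j \overline{f_j}g_j$), as follows directly from the series expansion of the exponential vectors. Interpreting the operator-valued integral weakly, the identity decomposition is equivalent to showing that, for every $f,g \in \mathbb{C}^d$,
\[
\frac{1}{\pi^d}\int_{\mathbb{R}^{2d}} \me^{-|z|^2}\,\scalare{e(f)}{e(z)}\,\scalare{e(z)}{e(g)}\,\md x\,\md y = \me^{\scalare{f}{g}}.
\]
Substituting the reproducing formula reduces this to the Gaussian integral $\frac{1}{\pi^d}\int \me^{-|z|^2 + \scalare{f}{z} + \scalare{z}{g}}\,\md x\,\md y$. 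The exponent splits as a sum over coordinates: writing $z_j = x_j + \mi y_j$,
\[
-|z_j|^2 + \overline{f_j}z_j + \overline{z_j}g_j = -x_j^2 - y_j^2 + (\overline{f_j}+g_j)x_j + \mi(\overline{f_j}-g_j)y_j,
\]
so the integral factors into $2d$ one-dimensional Gaussian integrals. Completing the square in each coordinate yields $\pi\,\me^{\overline{f_j}g_j}$ per mode, and the resulting $\pi^d$ cancels the normalization prefactor, giving exactly $\me^{\scalare{f}{g}}$.

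For the trace formula, I would apply the cyclic property of the trace together with the identity decomposition just proved, obtaining
\[
\tr(\rho) = \tr(\rho\,\identity) = \frac{1}{\pi^d}\int_{\mathbb{R}^{2d}} \me^{-|z|^2}\,\tr\bigl(\rho\,\outerp{e(z)}{e(z)}\bigr)\,\md x\,\md y,
\]
and then using $\tr\bigl(\rho\,\outerp{e(z)}{e(z)}\bigr) = \scalare{e(z)}{\rho\,e(z)}$. The only nontrivial point is justifying the interchange of trace and integral, which I would handle by expanding $\rho$ in its spectral decomposition and invoking Tonelli's theorem on the positive part, then extending by linearity to the general trace-class case. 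The main obstacle in the whole argument is thus not computational but conceptual: making precise the weak interpretation of the operator-valued integral appearing on the right-hand side of the first identity; once that interpretation is fixed, everything reduces to the reproducing property of exponential vectors and standard Gaussian integration.
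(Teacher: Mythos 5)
Your argument is correct, but note that the paper does not actually prove this lemma: it is stated with a pointer to Parthasarathy's \emph{What is a gaussian state} \cite{KRPcosa}, so there is no in-paper proof to compare against. What you give is the standard coherent-state resolution of the identity: the reproducing property $\scalare{e(f)}{e(g)}=\me^{\scalare{f}{g}}$, reduction to a product of $2d$ one-dimensional Gaussian integrals (your per-mode computation $\pi\,\me^{\overline{f_j}g_j}$ checks out, since $\tfrac14\bigl[(\overline{f}+g)^2-(\overline{f}-g)^2\bigr]=\overline{f}g$), and then Tonelli on the spectral decomposition for the trace identity. This is essentially the argument in the cited reference, so you are supplying a self-contained proof where the authors chose to delegate.

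The one step you flag but do not fully close is worth being explicit about: agreement of the weak integral with $\scalare{e(f)}{e(g)}$ on the total set of exponential vectors identifies the operator only once you know the sesquilinear form $Q(u,v)=\pi^{-d}\int\me^{-\modulo{z}^2}\scalare{u}{e(z)}\scalare{e(z)}{v}\,\md x\,\md y$ is bounded on all of $\mathsf{h}$; the integrand is \emph{not} absolutely integrable in operator norm (its norm is identically $1$), so boundedness is not automatic. It follows easily, e.g., because $Q$ is a positive form and Fatou's lemma applied to approximations by combinations of exponential vectors gives $Q(u,u)\le\norm{u}^2$, after which $0\le A\le\identity$ and weak agreement on a total set forces $A=\identity$; alternatively one verifies $Q(u,u)=\norm{u}^2$ directly on the number basis by polar coordinates. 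Either patch is routine, but without it the phrase ``testing against a total set'' does not by itself yield the operator identity.
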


We can now start the computations with the following two Lemmas.
\begin{lemma} \label{lem:etothenumber}
	For any $f,g \in \mathbb{C}^d$ and $c_1, \dots, c_d \in (0, \infty)$ it holds
	\[
		\scalar{ e(f)}{ \me^{-\sum_j c_j a_j^\dagger a_j} e(g)} = \exp \left\{ \scalar{f}{\me^{-D_c} g}\right\}.
	\]	
\end{lemma}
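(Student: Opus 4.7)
My plan is to exploit the fact that each $a_j^\dagger a_j$ is the number operator in the $j$-th mode, which acts diagonally on the canonical orthonormal basis of $\mathsf{h}$. The operators $a_j^\dagger a_j$ and $a_k^\dagger a_k$ commute by the CCR, so $\exp\bigl\{-\sum_j c_j a_j^\dagger a_j\bigr\}$ is well defined by functional calculus (and bounded, indeed contractive, since $c_j > 0$). On the basis vector $e(\alpha_1,\dots,\alpha_d)$ it acts simply as multiplication by $\prod_j \me^{-c_j \alpha_j}$.

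Next, I would apply this operator to the expansion of the exponential vector $e(g)$ in the canonical basis. Because the action is diagonal with factor $\prod_j \me^{-c_j \alpha_j}$ on $e(\alpha)$, and because the coefficient of $e(\alpha)$ in $e(g)$ depends on $g$ only through the monomial $\prod_j g_j^{\alpha_j}$, the product $\prod_j (\me^{-c_j} g_j)^{\alpha_j}$ reconstructs the coefficient of $e(\alpha)$ in the exponential vector attached to the rescaled argument. Concretely, since $(\me^{-D_c} g)_j = \me^{-c_j} g_j$, termwise evaluation yields
\[
\me^{-\sum_j c_j a_j^\dagger a_j}\, e(g) \;=\; e\bigl(\me^{-D_c} g\bigr),
\]
with convergence in $\mathsf{h}$ guaranteed by the absolute convergence of the defining series for $e(g)$ together with the bound $\prod_j \me^{-c_j \alpha_j} \leq 1$ (dominated convergence on the Fock basis).

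Finally, I would apply the standard reproducing identity for exponential vectors, $\scalar{e(f)}{e(h)} = \exp\{\scalar{f}{h}\}$, with $h = \me^{-D_c} g$, to obtain
\[
\scalar{e(f)}{\me^{-\sum_j c_j a_j^\dagger a_j} e(g)} \;=\; \scalar{e(f)}{e(\me^{-D_c} g)} \;=\; \exp\bigl\{\scalar{f}{\me^{-D_c} g}\bigr\},
\]
which is the claimed identity. There is really no genuine obstacle here: the only point to verify is the legitimacy of passing the bounded operator $\exp\bigl\{-\sum_j c_j a_j^\dagger a_j\bigr\}$ through the series defining $e(g)$, and this is immediate from its diagonal action and the positivity of the $c_j$.
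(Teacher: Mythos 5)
Your proposal is correct and follows essentially the same route as the paper's proof: both rest on the diagonal action of the number operators on the Fock basis, deduce $\me^{-\sum_j c_j a_j^\dagger a_j} e(g) = e(\me^{-D_c}g)$ by termwise evaluation of the exponential-vector expansion, and conclude with the reproducing identity $\scalar{e(f)}{e(h)} = \exp\{\scalar{f}{h}\}$. The only cosmetic difference is that the paper treats one mode at a time and then takes the product over $j$, whereas you handle all modes simultaneously and make the (harmless) convergence step explicit.
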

\begin{proof}
	Note at first that, for some fixed index $j$,
	\begin{align*}
		\me^{-c_ja_j^\dagger a_j} e(f) &= \sum_{m \geq 0} \sum_{n \geq 0} \sum_{|\alpha| = n} \frac{f_1^{\alpha_1} \cdot \dots \cdot f_d^{\alpha_d}}{\alpha_1! \dots \alpha_d!} \frac{1}{m!} (-c_ja_j^\dagger a_j)^m e(\alpha_1, \dots, \alpha_d) \\
		&= \sum_{m \geq 0}\sum_{n \geq 0} \sum_{|\alpha| = n} \frac{f_1^{\alpha_1} \cdot \dots \cdot f_d^{\alpha_d}}{\alpha_1! \dots \alpha_d!} \frac{1}{m!} (-c_j \alpha_j)^m e(\alpha_1, \dots, \alpha_d) \\
		&=\sum_{n \geq 0} \sum_{|\alpha| = n} \frac{f_1^{\alpha_1} \cdot \dots \cdot f_d^{\alpha_d}}{\alpha_1! \dots \alpha_d!} \me^{-c_j \alpha_j} e(\alpha_1, \dots, \alpha_d) \\
		&= \sum_{n \geq 0} \sum_{|\alpha| = n} \frac{f_1^{\alpha_1} \cdot \dots \cdot (\me^{-c_j} f_j)^{\alpha_j} \cdot  \dots \cdot f_d^{\alpha_d}}{\alpha_1! \dots \alpha_d!} e(\alpha_1, \dots, \alpha_d) \\
		&= e ( \operatorname{diag}(1, \dots,1,  \me^{-c_j},1, \dots, 1) f ).
	\end{align*}
	In this way
	\begin{align*}
		\scalar{ e(f)}{ \me^{\sum_j -c_j a_j^\dagger a_j} e(g)} &= \scalar{e(f)}{\prod_j \left(\me^{-c_ja_j^\dagger a_j}\right)e(g)}\\
		 &= \scalar{e(f)}{e(\me^{-D_c}g)} = \exp\left\{\scalar{f}{\me^{-D_c} g}\right\}.
	\end{align*}
\end{proof}

\begin{lemma} \label{lem:rhoWithWeyl}
	Let $f,g, z, w \in \mathbb{C}^d$ and $c_1, \dots, c_d \in (0, +\infty)$ it holds
	\begin{align*}
		\scalar{e(f)}{W(z)\me^{-\sum_j c_j a_j^\dagger a_j} W(w) e(g)} &= \exp\left\{-\frac{\modulo{z}^2 + \modulo{w}^2}{2} - \scalar{z}{\me^{-D_c}w} +\scalar{f}{\me^{-D_c}g}\right\}\\
		&\exp\left\{\scalar{f}{z + \me^{-D_c}w} - \scalar{w + \me^{-D_c}z}{g}\right\}
	\end{align*}
\end{lemma}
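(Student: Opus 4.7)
The plan is to reduce the matrix element to a scalar product of two exponential vectors by sliding the three operators past $e(g)$ from right to left. Throughout I rely on two facts recalled earlier in the appendix: the action of Weyl operators on exponential vectors, $W(u)e(h) = \exp\{-\modulo{u}^2/2 - \scalar{u}{h}\}\, e(u+h)$, stated immediately before Lemma \ref{lem:identityDecomposition}; and the identity $\me^{-\sum_j c_j a_j^\dagger a_j} e(h) = e(\me^{-D_c} h)$, which was already established inside the proof of Lemma \ref{lem:etothenumber}.

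First I would apply $W(w)$ to $e(g)$, producing the scalar factor $\exp\{-\modulo{w}^2/2 - \scalar{w}{g}\}$ and the vector $e(w+g)$. Next, applying $\me^{-\sum_j c_j a_j^\dagger a_j}$ simply turns $e(w+g)$ into $e(\me^{-D_c}(w+g))$, with no additional scalar factor. To push $W(z)$ across the inner product, I would use $\scalar{e(f)}{W(z)\psi} = \scalar{W(-z)e(f)}{\psi}$ together with the Weyl formula applied to $W(-z)e(f) = \exp\{-\modulo{z}^2/2 + \scalar{z}{f}\}\, e(f-z)$; taking the complex conjugate of this scalar when pulling it outside the scalar product gives $\scalar{e(f)}{W(z)\psi} = \exp\{-\modulo{z}^2/2 + \scalar{f}{z}\}\, \scalar{e(f-z)}{\psi}$.

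After these three reductions the matrix element becomes a product of three scalar prefactors times $\scalar{e(f-z)}{e(\me^{-D_c}(w+g))}$, which equals $\exp\{\scalar{f-z}{\me^{-D_c}(w+g)}\}$ by the standard reproducing identity $\scalar{e(u)}{e(v)} = \exp\{\scalar{u}{v}\}$ for exponential vectors. Expanding this last inner product into its four bilinear terms and regrouping the exponents yields the two exponential factors in the claim. Here one uses that $D_c$ is real and diagonal, so $\me^{-D_c}$ is self-adjoint and $\scalar{z}{\me^{-D_c}g} = \scalar{\me^{-D_c}z}{g}$, which is exactly what allows the second factor to be written as $\exp\{\scalar{f}{z + \me^{-D_c}w} - \scalar{w + \me^{-D_c}z}{g}\}$.

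The only real obstacle is bookkeeping: keeping track of which scalar factor acquires a complex conjugation when it is pulled out of the scalar product, and organising the four bilinear pieces of $\scalar{f-z}{\me^{-D_c}(w+g)}$ so that they line up with the two exponentials in the statement. No new analytic input beyond the two identities recalled above is required.
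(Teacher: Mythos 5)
Your proposal is correct and follows essentially the same route as the paper: move $W(z)$ to the left slot as $W(-z)e(f)$, act with $W(w)$ on $e(g)$, reduce to $\scalar{e(f-z)}{\me^{-\sum_j c_j a_j^\dagger a_j}e(g+w)}$, and evaluate via the identity $\me^{-\sum_j c_j a_j^\dagger a_j}e(h)=e(\me^{-D_c}h)$ from Lemma \ref{lem:etothenumber}. The conjugation bookkeeping and the use of self-adjointness of $\me^{-D_c}$ to rewrite $\scalar{z}{\me^{-D_c}g}$ as $\scalar{\me^{-D_c}z}{g}$ are exactly the "rearranging the terms" the paper leaves implicit.
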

\begin{proof}
	We have
	\begin{align*}
		\scalar{e(f)}{W(z)\me^{-\sum_j c_j a_j^\dagger a_j} W(w) e(g)} &= \scalar{W(-z)e(f)}{\me^{-\sum_j c_j a_j^\dagger a_j} W(w) e(g)} \\
		&= \me^{-\frac{\modulo{z}^2}{2} +\scalar{f}{z}} \me^{-\frac{\modulo{w}^2}{2} -\scalar{w}{g}} \\
		&\cdot \scalar{e(f-z)}{\me^{-\sum_j c_j a_j^\dagger a_j} e(g+w)}.
	\end{align*}
	Now using Lemma \ref{lem:etothenumber} and rearranging the terms we get the desired result.
\end{proof}

We are now ready to prove the formula
\begin{proposition}
	For $z,w \in \mathbb{C}^d$ it holds
	\begin{align*}
		\tr(\rho^{1/2} W(z) \rho^{1/2} W(w)) &= \exp \left\{-\frac{1}{2} \left(\Re\scalar{z}{S z} + \Re\scalar{w}{S w} + 2\Re\scalar{z}{\breve{S} w} \right)\right\}
	\end{align*}
	with $\breve{S} = M^TD_\xi M$, $\xi_j = \operatorname{csch}(s_j/2)$ and $M$ coming from \eqref{eq:rhoCanonicalDecomposition}.
\end{proposition}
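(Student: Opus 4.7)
The plan is to use the canonical decomposition \eqref{eq:rhoCanonicalDecomposition} of $\rho^{1/2}$ to conjugate away the symplectic transformation $M$, reducing the trace to one with a ``diagonal'' density, and then to evaluate it by propagating exponential vectors through the Weyl operators via Lemmas \ref{lem:etothenumber}, \ref{lem:identityDecomposition}, \ref{lem:rhoWithWeyl}.

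First, writing $\rho^{1/2}=\Gamma(M)^{-1}A\,\Gamma(M)$ with $A=\prod_j(1-\me^{-s_j})^{1/2}\,\me^{-\sum_j(s_j/2)a_j^\dagger a_j}$, cyclicity of the trace together with $\Gamma(M)W(z)\Gamma(M)^{-1}=W(Mz)$ gives
\[
\tr(\rho^{1/2}W(z)\rho^{1/2}W(w))=\tr(A\,W(z')\,A\,W(w')),\qquad z'=Mz,\;w'=Mw.
\]
The task therefore reduces to computing this latter trace with $A$ diagonal in the number basis. I would expand it via Lemma \ref{lem:identityDecomposition} as a Gaussian integral over $u=x+\mi y\in\mathbb{C}^d$ of $\scalar{e(u)}{AW(z')AW(w')e(u)}$, and then propagate the operators through the exponential vectors using $\me^{-\sum(s_j/2)a_j^\dagger a_j}e(g)=e(\me^{-D_{s/2}}g)$ (extracted from the proof of Lemma \ref{lem:etothenumber}) together with $W(z)e(f)=\me^{-|z|^2/2-\scalar{z}{f}}e(z+f)$. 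The resulting integrand is, coordinate-wise, quadratic-plus-linear in $(u_j,\overline{u_j})$.

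Next, applying the one-dimensional Gaussian identity $\tfrac{1}{\pi}\int\me^{-a|u|^2+b\overline{u}+cu}\,\md x\,\md y=a^{-1}\me^{bc/a}$ in each coordinate with $a_j=1-\me^{-s_j}$, the normalization $\prod_j(1-\me^{-s_j})$ coming from $A^2$ cancels $\prod_j a_j^{-1}$ exactly. Invoking the identities $\coth(s_j/2)=(1+\me^{-s_j})/(1-\me^{-s_j})=\sigma_j$ and $\operatorname{csch}(s_j/2)=2\me^{-s_j/2}/(1-\me^{-s_j})=\xi_j$, the remaining exponent collects into
\[
-\tfrac{1}{2}\Re\scalar{z'}{D_\sigma z'}-\tfrac{1}{2}\Re\scalar{w'}{D_\sigma w'}-\Re\scalar{z'}{D_\xi w'}.
\]
Translating back via $z'=Mz$, $w'=Mw$ and recalling that $M^T$ denotes the adjoint with respect to $\Re\scalar{\cdot}{\cdot}$, so that $\Re\scalar{Mz}{D_\sigma Mz}=\Re\scalar{z}{Sz}$ and $\Re\scalar{Mz}{D_\xi Mw}=\Re\scalar{z}{\breve S w}$, yields the claimed formula.

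The delicate part will be the bookkeeping in the Gaussian integration: I expect to have to verify carefully that all the $\me^{-s_j/2}$ factors and the normalization combine to produce precisely $\sigma_j$ on the diagonal terms and $\xi_j$ on the cross terms. A minor subtlety is that $M$ is only a real-linear (symplectic) operator on $\mathbb{C}^d$, so $M^T$ must be interpreted as the real-linear adjoint $M^\sharp$; cyclicity in the first step is unaffected because $\Gamma(M)$ is genuinely unitary on Fock space.
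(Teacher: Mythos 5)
Your proposal is correct, and it reaches the formula by a leaner route than the paper at the computational core. The first step is identical: conjugating by $\Gamma(M)$ and using cyclicity to reduce to $\tr\left(A\,W(Mz)\,A\,W(Mw)\right)$ with $A$ diagonal in the number basis. From there the paper symmetrizes, splitting each Weyl operator as $W(Mz)=W(Mz/2)W(Mz/2)$, and inserts the coherent-state resolution of the identity twice, producing a Gaussian integral over $\mathbb{R}^{4d}$ whose evaluation requires inverting a $4d\times 4d$ block matrix (this is where Lemma \ref{lem:rhoWithWeyl} and the row--column rearrangement are needed). You instead apply the trace formula of Lemma \ref{lem:identityDecomposition} once, to the full product $A\,W(Mz)\,A\,W(Mw)$, and propagate exponential vectors through via $\me^{-\sum_j (s_j/2) a_j^\dagger a_j}e(g)=e(\me^{-D_{s/2}}g)$ and $W(z)e(f)=\me^{-\modulo{z}^2/2-\scalar{z}{f}}e(z+f)$. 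The resulting integrand is $\me^{-\sum_j(1-\me^{-s_j})\modulo{u_j}^2+b_j\conj{u_j}+c_ju_j}$ up to a $u$-independent factor, so the integral factorizes into $d$ scalar complex Gaussians; I checked that with $b_j=\me^{-s_j/2}(Mz)_j+\me^{-s_j}(Mw)_j$ and $c_j=-\conj{(Mw)_j}-\me^{-s_j/2}\conj{(Mz)_j}$ the exponent collects to the coefficient $-\tfrac12\cdot\tfrac{1+\me^{-s_j}}{1-\me^{-s_j}}=-\tfrac12\coth(s_j/2)$ on the diagonal terms and $-\tfrac{2\me^{-s_j/2}}{1-\me^{-s_j}}=-\operatorname{csch}(s_j/2)$ on the cross terms, while the prefactor $\prod_j(1-\me^{-s_j})$ from the two copies of $A$ cancels the Gaussian normalization exactly, as you predicted. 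Your closing remarks are also right: the single-integral trace formula is legitimate here because the integrand is an absolutely convergent Gaussian, and $M^T$ must indeed be read as the real-linear adjoint $M^\sharp$, which is exactly how the paper uses it in $S=M^TD_\sigma M$. What your approach buys is the avoidance of the $4d\times 4d$ matrix inversion; what the paper's buys is a manifestly symmetric quadratic form in $(z,w)$ from the outset, at the cost of heavier bookkeeping.
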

\begin{proof}
	We start by noticing that using expression \eqref{eq:rhoCanonicalDecomposition}, the commutation rule for Weyl operators, the cyclic property of the trace and Lemma \ref{lem:identityDecomposition} we can write
	\begin{align*}
	\tr &(\rho^\frac{1}{2} W(z) \rho^\frac{1}{2} W(w)) = \prod_j (1-\me^{-s_j}) \tr \left( \me^{-\frac{1}{2}\sum_j s_ja_j^\dagger a_j} W(Mz) \me^{-\frac{1}{2}\sum_j s_ja_j^\dagger a_j} W(Mw)\right) \\
	&= \frac{\prod_j (1-\me^{-s_j})}{\pi^{2d}}\int_{\mathbb{R}^{4d}} \me^{-(\modulo{f}^2 + \modulo{g}^2)}\scalar{e(f)}{W\left( \frac{Mw}{2}\right) \me^{-\frac{1}{2}\sum_j s_ja_j^\dagger a_j}W\left( \frac{Mz}{2}\right) e(g)} \\
	& \qquad\qquad\qquad\qquad\qquad \cdot \scalar{e(g)}{W\left( \frac{Mz}{2}\right) \me^{-\frac{1}{2}\sum_j s_ja_j^\dagger a_j}W\left( \frac{Mw}{2}\right) e(f)}\md f \md g
\end{align*}
Eventually, using Lemma \ref{lem:rhoWithWeyl}, we obtain that
\[
	\tr (\rho^\frac{1}{2} W(z) \rho^\frac{1}{2} W(w))  = \frac{\prod_j (1-\me^{-s_j})}{\pi^{2d}}\int_{\mathbb{R}^{4d}} e^{\alpha_{(Mz,Mw)} (f,g)} \md f \md g,
\]
where
\begin{align*}
	\alpha_{(z,w)} (f,g) &= \exp\left\{ -\frac{\modulo{z}^2 + \modulo{w}^2}{4} - \frac{1}{2} \Re\scalar{z}{\me^{-\frac{1}{2} D_s} w}\right\} \\
	&\exp \left\{-\modulo{f}^2 - \modulo{g}^2 + 2\Re\scalar{f}{\me^{-\frac{1}{2}D_s}g} \right\} \\
	&\cdot \exp\left\{ \mi \Im\scalar{f}{w+\me^{-\frac{1}{2}D_s}z} +\mi \Im\scalar{g}{ z + \me^{-\frac{1}{2}D_s}w}\right\}
\end{align*}
We can rewrite the previous expression in matrix form in the following way
\begin{align*}
	\alpha_{(z,w)} (f,g) &= \exp\left\{
	-\frac{1}{4}
	\scalar{
	\begin{pmatrix}
		\Re z \\ \Im z \\ \Re w \\ \Im w
	\end{pmatrix}
	}{
	\begin{pmatrix}
		\identity & \me^{-\frac{1}{2}\mathbf{D_s}} \\ \me^{-\frac{1}{2}\mathbf{D_s}} & \identity
	\end{pmatrix}
	\begin{pmatrix}
		\Re z \\ \Im z \\ \Re w \\ \Im w
	\end{pmatrix}
	}
	\right\} \\
	& \exp \left\{-
	\scalar{
	\begin{pmatrix}
		\Re f \\ \Im f \\ \Re g \\ \Im g
\end{pmatrix}
}{
\begin{pmatrix}
	\identity & -\me^{-\frac{1}{2}\mathbf{D_s}} \\
	-\me^{-\frac{1}{2}\mathbf{D_s}} & \identity
\end{pmatrix}
\begin{pmatrix}
		\Re f \\ \Im f \\ \Re g \\ \Im g
\end{pmatrix}
}
	\right\} \\
	&\cdot \exp\left\{ \mi
	\scalar{
	\begin{pmatrix}
		\me^{-\frac{1}{2}\mathbf{D_s}}\mathbf{J} & \mathbf{J} \\
		 \mathbf{J} & \me^{-\frac{1}{2}\mathbf{D_s}}\mathbf{J}
	\end{pmatrix}
	\begin{pmatrix}
		\Re z \\ \Im z \\ \Re w \\ \Im w
	\end{pmatrix}
	}{
	\begin{pmatrix}
		\Re f \\ \Im f \\ \Re g \\ \Im g
\end{pmatrix}
}
	\right\}
\end{align*}
Recall now for $x,b \in \mathbb{R}^n$ and $A \in M_n(\mathbb{R})$ we have the formula for the gaussian integral
\[
	\int_{\mathbb{R}^n} \me^{ -\frac{1}{2}\scalar{x}{Ax} + \mi \scalar{b}{x}} \md x = \sqrt{\frac{(2\pi)^n}{\det A}} \exp\left\{ -\frac{1}{2} \scalar{b}{A^{-1}b} \right\}.
\]
We want to use it with
\[
	A = 2\begin{pmatrix}
	\identity & -\me^{-\frac{1}{2}\mathbf{D_s}} \\
	-\me^{-\frac{1}{2}\mathbf{D_s}} & \identity
\end{pmatrix}, \quad b= \begin{pmatrix}
		\me^{-\frac{1}{2}\mathbf{D_s}}\mathbf{J} & \mathbf{J} \\
		 \mathbf{J} & \me^{-\frac{1}{2}\mathbf{D_s}}\mathbf{J}
	\end{pmatrix}
	\begin{pmatrix}
		\Re z \\ \Im z \\ \Re w \\ \Im w
	\end{pmatrix}.
\]
Note that, by rearranging rows and columns of $A$ we get
\[
	\frac{1}{2}A = \begin{pmatrix}
		A_1 & &  & \\
		& A_2 & & \\
		& & \ddots & \\
		&&& A_d
	\end{pmatrix}, \quad A_j = \begin{pmatrix}
		1 & 0 & -\me^{-\frac{s_j}{2}} & 0 \\
		0 & 1 & 0 & -\me^{-\frac{s_j}{2}}  \\
		-\me^{-\frac{s_j}{2}}  & 0 & 1 & 0 \\
		0 & -\me^{-\frac{s_j}{2}}  & 0 & 1
	\end{pmatrix},
\]
and for each block we can easily compute $\det A_j = (1-\me^{-s_j})^2$ while
\[
	A_j^{-1} = \frac{1}{1-\me^{-s_j}}\begin{pmatrix}
		1 & 0 & \me^{-\frac{s_j}{2}} & 0 \\
		0 & 1 & 0 & \me^{-\frac{s_j}{2}}  \\
		\me^{-\frac{s_j}{2}}  & 0 & 1 & 0 \\
		0 & \me^{-\frac{s_j}{2}}  & 0 & 1
	\end{pmatrix}.
\]
In particular $\det A = 2^{4d} \prod_j (1-\me^{-s_j})^2$ and rearranging rows and columns to the initial configuration we obtain
\[
	A^{-1} = \frac{1}{2}\begin{pmatrix}
		\identity & \me^{-\frac{1}{2} \mathbf{D_s}} \\
		\me^{-\frac{1}{2}\mathbf{D_s}} & \identity
	\end{pmatrix} \begin{pmatrix}
		\identity- \me^{-\mathbf{D_s}} & 0 \\ 0 & \identity-\me^{-\mathbf{D_s}}
	\end{pmatrix}^{-1}.
\]
Therefore we have proved
\[
	\tr (\rho^\frac{1}{2} W(z) \rho^\frac{1}{2} W(w)) = \me^{-\frac{1}{2} \beta (Mz, Mw)}
\]
with
\begin{align*}
	\beta (z,w) &= \scalar{ \begin{pmatrix}
		\me^{-\frac{1}{2}\mathbf{D_s}}\mathbf{J} & \mathbf{J} \\
		 \mathbf{J} & \me^{-\frac{1}{2}\mathbf{D_s}}\mathbf{J}
	\end{pmatrix}
	\begin{pmatrix}
		\Re z \\ \Im z \\ \Re w \\ \Im w
	\end{pmatrix} }{ A^{-1}\begin{pmatrix}
		\me^{-\frac{1}{2}\mathbf{D_s}}\mathbf{J} & \mathbf{J} \\
		 \mathbf{J} & \me^{-\frac{1}{2}\mathbf{D_s}}\mathbf{J}
	\end{pmatrix}
	\begin{pmatrix}
		\Re z \\ \Im z \\ \Re w \\ \Im w
	\end{pmatrix}} \\
	&+ \frac{1}{2}\scalar{
	\begin{pmatrix}
		\Re z \\ \Im z \\ \Re w \\ \Im w
	\end{pmatrix}
	}{
	\begin{pmatrix}
		\identity & \me^{-\frac{1}{2}\mathbf{D_s}} \\ \me^{-\frac{1}{2}\mathbf{D_s}} & \identity
	\end{pmatrix}
	\begin{pmatrix}
		\Re z \\ \Im z \\ \Re w \\ \Im w
	\end{pmatrix}
	} \\
	&= \scalar{
	\begin{pmatrix}
		\Re z \\ \Im z \\ \Re w \\ \Im w
	\end{pmatrix}
	}{
	B
	\begin{pmatrix}
		\Re z \\ \Im z \\ \Re w \\ \Im w
	\end{pmatrix}
	} + \frac{1}{2}\scalar{
	\begin{pmatrix}
		\Re z \\ \Im z \\ \Re w \\ \Im w
	\end{pmatrix}
	}{
	\begin{pmatrix}
		\identity & \me^{-\frac{1}{2}\mathbf{D_s}} \\ \me^{-\frac{1}{2}\mathbf{D_s}} & \identity
	\end{pmatrix}
	\begin{pmatrix}
		\Re z \\ \Im z \\ \Re w \\ \Im w
	\end{pmatrix}
	}
\end{align*}
where we set
\[
	B = \begin{pmatrix}
		\me^{-\frac{1}{2}\mathbf{D_s}}\mathbf{J} & \mathbf{J} \\
		 \mathbf{J} & \me^{-\frac{1}{2}\mathbf{D_s}}\mathbf{J}
	\end{pmatrix}^* A^{-1}\begin{pmatrix}
		\me^{-\frac{1}{2}\mathbf{D_s}}\mathbf{J} & \mathbf{J} \\
		 \mathbf{J} & \me^{-\frac{1}{2}\mathbf{D_s}}\mathbf{J}
	\end{pmatrix}.
\]
Now note that every matrix that appears as a block in the previous expression commutes with the others (e.g. $\mathbf{J}$ with $\me^{-\frac{1}{2}\mathbf{D_s}}$ or $\me^{-\frac{1}{2}\mathbf{D_s}}$ with $(\identity - \me^{-\mathbf{D_s}})^{-1}$). This observation, alongside the explicit expression we obtained for $A^{-1}$ leads to
\[
	B = \frac{1}{2}\begin{pmatrix}
		\identity + 3\me^{-\mathbf{D_s}} & \me^{-\frac{1}{2}\mathbf{D_s}}( 3\identity + \me^{-\mathbf{D_s}}) \\
		\me^{-\frac{1}{2}\mathbf{D_s}}( 3\identity + \me^{-\mathbf{D_s}}) & \identity + 3\me^{-\mathbf{D_s}}
\end{pmatrix}\begin{pmatrix}
		\identity- \me^{-\mathbf{D_s}} & 0 \\ 0 & \identity-\me^{-\mathbf{D_s}}
	\end{pmatrix}^{-1}.
\]
Putting it all back together we obtain
\begin{align*}
	\beta (z,w) &= \scalar{
	\begin{pmatrix}
		\Re z \\ \Im z \\ \Re w \\ \Im w
	\end{pmatrix}
	}{
	\begin{pmatrix}
		\identity + \me^{-\mathbf{D_s}} &2 \me^{-\frac{1}{2}\mathbf{D_s}} \\ 2\me^{-\frac{1}{2}\mathbf{D_s}} & \identity + \me^{-\mathbf{D_s}}
	\end{pmatrix}\begin{pmatrix}
		\identity- \me^{-\mathbf{D_s}} & 0 \\ 0 & \identity-\me^{-\mathbf{D_s}}
	\end{pmatrix}^{-1}
	\begin{pmatrix}
		\Re z \\ \Im z \\ \Re w \\ \Im w
	\end{pmatrix}.
	}
\end{align*}
Performing the final calculation we obtain
\[
	\beta (z,w) = \scalar{\begin{pmatrix}
		\Re z \\ \Im z \\ \Re w \\ \Im w
	\end{pmatrix}}{
	\begin{pmatrix}
		\mathbf{D_\sigma} & \mathbf{D_\xi} \\
		\mathbf{D_\xi} & \mathbf{D_\sigma}
	\end{pmatrix}\begin{pmatrix}
		\Re z \\ \Im z \\ \Re w \\ \Im w
	\end{pmatrix}
	} = \Re\scalar{z}{D_\sigma z} + \Re\scalar{w}{D_\sigma w} + 2\Re\scalar{z}{D_\xi w}
\]
where $\xi_j = \operatorname{csch}(s_j/2)$, ending the proof.
\end{proof}

We are now ready to prove Lemma \ref{lem:normSpectralGapKMS} following the same lines of Lemma \ref{lem:normSpectralGap} but using the new formula for the trace.

\begin{proof}[ Proof of Lemma \ref{lem:normSpectralGapKMS}]
	Using \eqref{eq:brevex} we have
	\begin{align*}
	\norm{T_t(\breve{x})}^2 &= \sum_{j,k} \conj{\eta_j}\eta_k\left[ \tr(\rho^\frac{1}{2} \mathcal{T}_t(W(-z_j)) \rho^\frac{1}{2}\mathcal{T}_t(W(z_k))) - \me^{-\frac{1}{2}\Re\scalar{z_j}{Sz_j} - \frac{1}{2}\Re\scalar{z_k}{Sz_k}}\right] \\
	&= \sum_{j,k} \conj{\eta_j}\eta_k\left[ c_t(z_j)c_t(z_k)\tr(\rho^\frac{1}{2} W(-\me^{tZ}z_j) \rho^\frac{1}{2}W(\me^{tZ}z_k)) - \me^{-\frac{1}{2}\Re\scalar{z_j}{Sz_j} - \frac{1}{2}\Re\scalar{z_k}{Sz_k}}\right]
	\end{align*}
where $c_t(z_j),c_t(z_k)$ are defined as in \eqref{eq:ctz}.	Recalling that
	\[
		\Re\scalar{\me^{tZ}z}{S\me^{tZ}z} = \int_t^{\infty} \Re\scalar{\me^{sZ}z}{C\me^{sZ}z} \md s
	\]
	we obtain
	\[
		c_t(z_j)c_t(z_k)\tr(\rho^\frac{1}{2} W(-\me^{tZ}z_j) \rho^\frac{1}{2}W(\me^{tZ}z_k)) = \me^{-\frac{1}{2}\Re\scalar{z_j}{Sz_j} - \frac{1}{2}\Re\scalar{z_k}{Sz_k}} \me^{\Re\scalar{\me^{tZ}z_j}{\breve{S}\me^{tZ}z_k}}.
	\]
	Therefore we get
	\[
		\norm{T_t(\breve{x})}^2 = \sum_{j,k} \conj{\xi_j}\xi_k \left[\me^{\Re\scalar{\me^{tZ}z_j}{\breve{S}\me^{tZ}z_k}} - 1 \right].
	\]
	The remaining part of the Lemma follows immediately.
\end{proof}

{\footnotesize

\bigskip

Authors' addresses:
\begin{itemize}
\item[$^{(1)}$] Mathematics Department, Politecnico di Milano,
Piazza Leonardo da Vinci 32, I - 20133 Milano, Italy \quad {\tt franco.fagnola@polimi.it}
\item[$^{(2)}$] Mathematics Department, University of Genova,
Via Dodecaneso, I - 16146 Genova, Italy
\end{itemize}

\end{document}